\numberwithin{equation}{section}
\def\nfrac#1#2{{\textstyle\frac{#1}{#2}}}
\renewcommand\ln{\log}
\newcommand\disteq{\sim}
\newcommand{\fR}{\mathfrak R}
\newcommand{\va}{\vec a}
\newcommand{\vd}{\vec d}
\newcommand{\vD}{\vec D}
\newcommand{\vc}{\vec c}
\newcommand{\vX}{\vec X}
\renewcommand{\epsilon}{\eps}
\newcommand{\GG}{\mathbb G}
\newcommand\ALPHA{\vec\alpha}
\newcommand\vY{\vec Y}
\newcommand\vr{\vec r}
\newcommand\vm{\vec m}
\newcommand\GAMMA{{\vec\gamma}}
\newcommand\RHO{{\vec\rho}}
\newcommand\nix{\,\cdot\,}
\newcommand\dd{{\mathrm d}}
\newcommand\G{\vec G}
\renewcommand{\vec}[1]{\boldsymbol{#1}}
\newcommand\SIGMA{\vec\sigma}
\newcommand\aco[1]{#1}
\newcommand\csg[1]{#1}
\newtheorem{definition}{Definition}[section]
\newtheorem{remark}[definition]{Remark}
\newtheorem{theorem}[definition]{Theorem}
\newtheorem{lemma}[definition]{Lemma}
\newtheorem{proposition}[definition]{Proposition}
\newtheorem{corollary}[definition]{Corollary}
\newcommand\fC{\mathfrak{C}}
\newcommand\fp{\mathfrak{a}}
\newcommand\fr{\mathfrak{r}}
\newcommand\cA{\mathcal{A}}
\newcommand\cC{\mathcal{C}}
\newcommand\cG{\mathcal{G}}
\newcommand\cE{\mathcal{E}}
\newcommand\cS{\mathcal{S}}
\newcommand\cM{\mathcal{M}}
\newcommand\cP{\mathcal{P}}
\newcommand\cX{\mathcal{X}}
\newcommand\cY{\mathcal{Y}}
\newcommand\cZ{\mathcal{Z}}
\def\cR{{\mathcal R}}
\def\cE{{\mathcal E}}
\newcommand\ve{\vec e}
\newcommand\vu{\vec u}
\newcommand\vv{\vec v}
\newcommand\vx{\vec x}
\newcommand\vM{\vec M}
\newcommand\vy{\vec y}
\newcommand\eul{\mathrm{e}}
\newcommand\eps{\varepsilon}
\newcommand\NN{\mathbb{N}}
\newcommand\Erw{\mathbb{E}}
\newcommand{\vecone}{\vec{1}}
\newcommand{\Po}{{\rm Po}}
\newcommand{\Bin}{{\rm Bin}}
\newcommand\dTV{d_{\mathrm{TV}}}
\newcommand{\bink}[2] {{\binom{#1}{#2}}}
\newcommand\bc[1]{\left({#1}\right)}
\newcommand\cbc[1]{\left\{{#1}\right\}}
\newcommand\bcfr[2]{\bc{\frac{#1}{#2}}}
\newcommand{\bck}[1]{\left\langle{#1}\right\rangle}
\newcommand\brk[1]{\left\lbrack{#1}\right\rbrack}
\newcommand\scal[2]{\bck{{#1},{#2}}}
\newcommand\abs[1]{\left|{#1}\right|}
\newcommand\RR{\mathbb{R}}
\newcommand{\whp}{w.h.p.}
\newcommand{\Erdos}{Erd\H{o}s}
\newcommand{\Renyi}{R\'enyi}
\newcommand{\Bollobas}{Bollob\'as}
\newcommand{\Luczak}{\L uczak}
\newcommand{\Mezard}{M\'ezard}
\newcommand{\Kucera}{Ku\v{c}era}
\newcommand\pr{\mathbb{P}} 
\renewcommand\Pr{\pr} 
\newcommand\Lem{Lemma}
\newcommand\Prop{Proposition}
\newcommand\Thm{Theorem}
\newcommand\Def{Definition}
\newcommand\Cor{Corollary}
\newcommand\Sec{Section}
\@date \else {\vskip3ex \centering\footnotesize\@date\par\vskip1ex}\fi
\else \@footnotetext{\@setdate}\fi}
\begin{document}

\title{Lower bounds on the chromatic number of random graphs}

\author{Peter Ayre, Amin Coja-Oghlan, Catherine Greenhill}

\address{Peter Ayre, {\tt peter.ayre@unsw.edu.au}, School of Mathematics and Statistics, UNSW Sydney, NSW 2052, Australia.}

\address{Amin Coja-Oghlan, {\tt amin.coja-oghlan@tu-dortmund.de}, TU Dortmund, Faculty for Computer Science, 12 Otto Hahn St, Dortmund 44227, Germany.}

\address{Catherine Greenhill, {\tt c.greenhill@unsw.edu.au}, School of Mathematics and Statistics, UNSW Sydney, NSW 2052, Australia.}

\thanks{Research of the first author supported in part by DFG CO 646.
Research of the third author supported by the Australian Research Council Discovery Project DP190100977.}


\begin{abstract}
We prove that a formula predicted on the basis of non-rigorous physics arguments
[Zdeborov\'a and Krzakala: Phys.\ Rev.\ E  (2007)] provides a lower bound on the chromatic number of sparse random graphs.
The proof is based on the interpolation method from mathematical physics.
In the case of random regular graphs the lower bound can be expressed algebraically, while in the case of the binomial random we obtain a variational formula.
As an application we calculate improved explicit lower bounds on the chromatic number of random graphs for small (average) degrees.
Additionally, we show how asymptotic formulas for large degrees that were previously obtained by lengthy and complicated combinatorial arguments can be re-derived easily from these new results.
\hfill{\em MSC: 05C80}
\end{abstract}

\maketitle

\section{Introduction}\label{Sec_intro}

\subsection{Motivation and background}\label{Sec_background}
A most fascinating feature of combinatorics is how easy-to-state problems sometimes lead to deep and difficult mathematical challenges.
The random graph colouring problem is a case in point.
First mentioned in the seminal paper of \Erdos\ and \Renyi\ that started the theory of random graphs~\cite{ER},
the problem of finding the chromatic number of the binomial random graph $\GG(n,d/n)$ with a fixed average degree $d$ remains open  to this day.
It is, in fact, the single open problem posed in that seminal paper that still awaits a complete solution.
Nor is the chromatic number of the random $d$-regular graph, a conceptually simpler object, known
for all values of~$d$.
Nevertheless, the quest for the chromatic  number has contributed tremendously to the development of new techniques, some of which now count among the standard tools of probabilistic combinatorics~\cite{ShamirSpencer}.

A series of important papers contributed ever tighter bounds on the chromatic number of random graphs.
Straightforward first moment calculations show that for any $q\geq3$ and for $\GG$ either the binomial%
\footnote{\aco{Sometimes $\GG(n,d/n)$ is referred to as the \Erdos-\Renyi\ model. This is a slight misnamer as \Erdos\ and \Renyi~\cite{ER} actually worked with a uniformly random graph with a given number of edges.}}
random graph $\GG(n,d/n)$ or the random regular graph $\GG(n,d)$,
\begin{align}\label{eqFirstMmt}
\chi(\GG)&>q\quad\mbox{ \whp}&&\mbox{ if }\qquad\log q+\frac d2\log(1-1/q)<0.
\end{align}
To be precise, \eqref{eqFirstMmt} is obtained by computing the expected number of $q$-colourings%
\footnote{\aco{Throughout the paper the term $q$-colouring or just colouring refers to proper vertex colourings. That is, colours are assigned to the vertices of a graph such that no two adjacent vertices receive the same colour.}}
, which tends to zero as $n\to\infty$ if $\log q+d\log(1-1/q)/2<0$.
A celebrated contribution of Achlioptas and Naor~\cite{AchNaor} shows that for $\GG=\GG(n,d/n)$, 
\begin{align}\label{eqSecondMmt}
\chi(\GG)&\leq q\quad\mbox{\whp}&&\mbox{ if }\qquad d< 2(q-1)\log(q-1).
\end{align}
The proof hinges on the computation of the second moment of the number of $q$-colourings, which involves a delicate analytical optimisation task.
Following up on work of Achlioptas and Moore~\cite{AchMoore}, 
Kemkes, P\'erez-Gim\'enez and Wormald~\cite{KPGW} 
showed that \eqref{eqSecondMmt} holds for the random regular graph $\GG=\GG(n,d)$ as well.
Expanding \eqref{eqFirstMmt}--\eqref{eqSecondMmt} asymptotically for large $q$, we find
$\chi(\GG)>q$ if $d>(2q-1)\log q+o_q(1)$, while 
$\chi(\GG)\leq q$ if $d\leq (2q-2)\log q-2+o_q(1)$, with $o_q(1)$ vanishing as $q\to\infty$.
A series of papers~\cite{ACOcovers,ColReg,ACOVilenchik} improved these asymptotic bounds to
\begin{align}\label{eqEnhancedSmm}
\chi(\GG)\begin{cases}\leq q&\mbox{ if }d\leq (2q-1)\log q-2\log 2+o_q(1),\\
>q&\mbox{ if }d>(2q-1)\log q-1+o_q(1)
\end{cases}\qquad\mbox{\whp}
\end{align}
for both the binomial and the random regular graph.
But in the absence of explicit estimates of the $o_q(1)$ error term \eqref{eqEnhancedSmm} fails to render improved bounds for any specific value of $q$.
Finally, several articles have been dedicated to the special case $q=3$.
For the binomial random graph the best bounds read \cite{AMo,DuboisMandler}
\begin{align}\label{eq3}
\chi(\GG(n,d/n))&
\begin{cases}
\leq3&\mbox{if }d\leq 4.03\\
>3&\mbox{if }d>4.94
\end{cases}\qquad\mbox{\whp}
\end{align}
For the random regular graph 
Diaz, Kaporis, Kemkes, Kirousis, P\'erez and Wormald~\cite{DKKKPW}
showed that $\chi(\GG(n,5))=3$ \whp\ if a certain optimisation problem attains its maximum at a specific point, for which they provided numerical evidence.
Moreover, Shi and Wormald~\cite{Shi1,Shi2} proved analytically that  $\chi(\GG(n,4))=3$, while~\eqref{eqFirstMmt} implies that $\chi(\GG(n,6))>3$.
The proofs of all of the above lower bounds rely upon the first moment method, in some cases applied to cleverly designed random variables~\cite{ACOcovers,ColReg,DuboisMandler}.
Similarly, all of the upper bounds derive from second moment arguments, with the exception of the 
upper bound from~\eqref{eq3} and~\cite{Shi1,Shi2}, which are algorithmic. 

Additionally, physicists  brought to bear a canny but non-rigorous technique called the `1RSB cavity method' on the random graph colouring problem~\cite{ZK}.
In the case of the random regular graph, the physics calculations predict an elegant formula. Let
\begin{align}\label{eqSigma}
\Sigma_{d,q}(\alpha)&=\log\bc{\sum_{i=0}^{q-1}(-1)^i\bink q{i+1}(1-(i+1)(1-\alpha)/q)^d}-\frac d2\log(1-(1-\alpha)^2/q) &(\alpha\in[0,1])
\end{align}
and let $\alpha^*\in[0,1]$ be the solution to the algebraic equation
\begin{align}\label{eqalpha}
\sum_{i=0}^{q-1}(-1)^i(1-(i+1)(1-\alpha)/q)^{d-1}\bc{\bink{q-1}{i}-\frac{1-\alpha}q \bink{q}{i+1}}=0
\end{align}
that minimises $\Sigma_{d,q}(\nix)$.   (If there is more than one such value $\alpha^*$, choose one arbitrarily.) Then~\cite{ZK} predicts that
\begin{align}\label{eqreg}
\chi(\GG(n,d))\begin{cases}
\leq q&\mbox{ if } \Sigma_{d,q}(\alpha^*)\geq0\\
>q&\mbox{ if }\Sigma_{d,q}(\alpha^*)<0
\end{cases}\qquad\mbox{\whp}
\end{align}
There is a similarly precise, albeit more complicated, prediction as to the chromatic number of the binomial random graph; see \Sec~\ref{Sec_binom} below.

The aim of this paper is to rigorously establish the lower bounds on the chromatic number predicted by the 
cavity method.
In contrast to prior lower bound arguments, we do not rely on the  first moment method.
Instead, we adapt a technique from the mathematical physics of spin glasses known as the `interpolation method'~\cite{FranzLeone,Guerra,PanchenkoTalagrand} to the graph colouring problem.
In a combinatorial context, the interpolation method has previously been applied to establish a tight lower bound on the random $k$-SAT threshold~\cite{DSS3}, to the independence number of random graphs~\cite{Lelarge} and other optimisation problems on random (hyper-)graphs~\cite{COP,Panchenko2,SSZ} as well as to estimate the rank of random matrices over finite fields~\cite{CG}.
So it may not be surprising that the method can be made to work for graph colouring.
However, the interpolation method remains relatively unknown in combinatorics, where, we believe, it may potentially improve over first moment bounds in many more applications as well.
We therefore endeavour to explain the method at leisure in combinatorial terms to facilitate future applications of the interpolation method.

We proceed to state the results for the random regular graph precisely, followed by the lower bound on the chromatic number of the binomial random graph.
\Sec~\ref{Sec_related} contains references to related work.
An outline of the proof strategy follows in \Sec~\ref{Sec_overview}.

\subsection{The random regular graph}
Given $d,q\geq3$ and with $\Sigma_{d,q}(\alpha)$ from~\eqref{eqSigma} define
\begin{align}\label{eqmySigma}
\Sigma_{d,q}&=\min_{0\leq\alpha\leq 1}\Sigma_{d,q}(\alpha),&
d_q&=\min\cbc{d\geq3\, :\, \Sigma_{d,q}<0}. 
\end{align}
Then we have the following lower bound on the chromatic number of the random regular graph.

\begin{theorem}\label{Thm_reg}
If $q\geq3$ and $d\geq d_q$ then $\chi(\GG(n,d))>q$ \whp
\end{theorem}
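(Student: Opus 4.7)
The plan is to use the interpolation method of Franz--Leone and Panchenko--Talagrand to establish a one-step replica symmetry breaking (1RSB) upper bound on a fractional moment of the number $Z = Z_q(\GG(n,d))$ of proper $q$-colourings. Concretely, I would prove that for every $\alpha \in [0,1]$ and some $s = s(\alpha) \in (0,1]$,
\begin{align*}
\frac{1}{n}\log \Erw[Z^s] \leq \Psi_{d,q}(s,\alpha) + o(1),
\end{align*}
where the 1RSB functional $\Psi_{d,q}$ is engineered so that, when one optimises $s$ and $\alpha$, the minimum is negative whenever $\Sigma_{d,q}(\alpha^*) < 0$. Combined with Markov's inequality $\Pr[Z \geq 1] = \Pr[Z^s \geq 1] \leq \Erw[Z^s]$, this will yield $\chi(\GG(n,d)) > q$ \whp\ under the hypothesis $d \geq d_q$ of the theorem.

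Working in the pairing/configuration model for $\GG(n,d)$, I would introduce a continuous family of random objects $(\fG_t)_{t \in [0,1]}$ with $\fG_1 = \GG(n,d)$ and $\fG_0$ a fully decoupled cavity model. In $\fG_0$ each vertex is incident to an independent $\Po(d)$ number of ``cavity half-edges,'' each carrying an independent random probability distribution on $[q]$ drawn from a measure $\pi_\alpha$. Guided by the 1RSB ansatz of \cite{ZK}, $\pi_\alpha$ is chosen as the two-atom distribution that returns the uniform distribution on $[q]$ with probability $\alpha$ and a point mass at a uniformly random colour with probability $1 - \alpha$---the ``soft'' and ``frozen'' cavity fields, respectively. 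The interpolating $\fG_t$ retains each edge of $\GG(n,d)$ independently with probability $t$ and replaces the remaining half-edges by independent fields drawn from $\pi_\alpha$.

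The crux of the argument, and the principal obstacle, is to show that $\Phi_n(t) := \tfrac{1}{n}\log \Erw[Z_t^s]$ is non-increasing in $t$, where $Z_t$ denotes the (appropriately normalised) partition function of $\fG_t$. Differentiating in $t$ yields the difference between the contribution of a ``true'' edge---two correlated half-edges forced to carry distinct colours under the Gibbs measure---and two independent cavity half-edges. The Franz--Leone strategy reduces this to a pointwise convexity inequality in $z \mapsto z^s$; the non-trivial step is to verify that the two-atom structure of $\pi_\alpha$ and the hard disequality constraint of proper colouring cooperate to give the correct sign, and that $\pi_\alpha$ is a valid stationary point of the associated cavity recursion. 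I expect this sign check to be substantially more delicate than in the analogous $k$-SAT interpolation of \cite{DSS3}, because the colour-permutation symmetry must be broken by the frozen atoms in a way that is compatible with the $q$-ary constraint.

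Finally, evaluating $\Phi_n(0)$ is routine since $\fG_0$ factorises over vertices: the single-site partition function expands by inclusion--exclusion over ``forbidden'' colour classes into precisely the alternating sum in the first term of $\Sigma_{d,q}(\alpha)$ in \eqref{eqSigma}, while the edge-weight correction contributes the $-\tfrac{d}{2}\log(1 - (1-\alpha)^2/q)$ term. Combining with the monotonicity of $\Phi_n$ and optimising over $\alpha$ (using the hypothesis $\min_\alpha \Sigma_{d,q}(\alpha) < 0$ from \eqref{eqmySigma}) closes the argument. Passage from the pairing model to the uniform random $d$-regular graph is standard by contiguity for fixed $d \geq 3$.
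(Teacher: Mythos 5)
Your proposal takes a genuinely different path from the paper's, though it arrives at the same 1RSB functional and uses the same cavity-field ansatz $\pi_\alpha$ (which matches the paper's $r_\alpha$ from \eqref{eqralpha}). The paper does \emph{not} bound $\frac1n\log\Erw[Z^s]$ and apply Markov. Instead it bounds $\frac1n\Erw[\log Z_\beta(\GG)]$ for the \emph{soft} Potts antiferromagnet at finite $\beta$, proves concentration of $\log Z_\beta$ via Azuma (\Prop~\ref{Lemma_Azuma}, \Cor~\ref{Cor_q}), and smuggles the Parisi parameter in at the very end via the Poisson--Dirichlet trick (\Lem~\ref{Lemma_PD}): the auxiliary variable node $s$ with weight $\GAMMA\sim$ PD($y$) converts $\Erw\log\sum_k\GAMMA(k)X_k$ into $\frac1y\log\Erw[X^y]$. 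The limits $\beta\to\infty$, $y\to0$ are taken only after the interpolation has been carried out (\Prop~\ref{Prop_zero}). Your route---interpolate the fractional moment $\Erw[Z_t^s]$ directly, working with hard colouring constraints, and close via Markov---is the Franz--Leone replicated-free-energy variant; it trades the Azuma concentration step for Markov, which is a real simplification.

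However, there is a concrete risk in your route that the paper's design is specifically engineered to avoid. With hard constraints and frozen atoms $\delta_c$ among the cavity fields, the interpolating partition function $Z_t$ vanishes with positive probability at every $0<t<1$ (e.g., two conflicting frozen fields, or a frozen field clashing with a retained edge). The paper's monotonicity proof (\Prop~\ref{Prop_inter2}) rests on the identity $\Delta_\ell=\sum_\tau\Erw[\vecone\fC\scal{(\vX_\tau-\vY_\tau)^2}{\mu_{\G_t}}]\geq0$, which is phrased in terms of expectations under the Gibbs measure $\mu_{\G_t}$; this only makes sense because the Potts weights are strictly positive (bounded in $[\eul^{-\beta},1]$), which also yields the Lipschitz property \eqref{eqLip} driving the concentration and the $o(1)$ error control. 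When you replace the expansion of $\log(1-x)$ (used at~\eqref{eqLemma_inter2_2_4}) with whatever power-series or integral representation underlies the $z\mapsto z^s$ comparison, you will need an analogue of that quadratic nonnegativity, and you will need it to survive the hard-constraint degenerations. Your own caveat that the sign check is ``substantially more delicate'' is well placed: this is precisely the gap you would have to close, and the soft-constraint $\beta<\infty$ detour is the standard device for closing it. In short: your high-level structure (1RSB cavity ansatz with two-atom fields, inclusion--exclusion at the decoupled endpoint producing the alternating sum in \eqref{eqSigma}, the $-\frac d2\log(1-(1-\alpha)^2/q)$ edge-correction) is all correct and matches the paper; the unresolved step is the monotonicity of $\log\Erw[Z_t^s]$ under hard colouring constraints, which the paper deliberately sidesteps rather than confronts.
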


The function $\alpha\mapsto\Sigma_{d,q}(\alpha)$ is differentiable and $\Sigma_{d,q}(1)=0$.
Furthermore, the calculations performed towards~\cite[eq.~(35)]{ZK} show that $\Sigma_{d,q}'(0)<0$.
Hence, whenever the minimum value $\Sigma_{d,q}$ is negative, the minimiser $\alpha$ must be 
a zero of $\Sigma_{d,q}'(\nix)$.
It follows, after some algebra, that the minimiser is a solution to~\eqref{eqalpha}.
Thus, \Thm~\ref{Thm_reg} verifies the lower bound from~\eqref{eqreg}, which~\cite{ZK} conjectures to be tight for all $q\geq3$.

Of course, we can evaluate $\Sigma_{d,q}$ numerically and calculate $d_q$ for any given $q$.
The first few values are displayed in Table~\ref{Tab_reg}.
For those $q$ where $d_q$ is displayed in boldface,
 the new bound strictly improves over the first moment bound~\eqref{eqFirstMmt}; {in the other cases the bounds coincide}.
In addition, Table~\ref{Tab_reg} shows the value $d_{q,\mathrm{smm}}$ up to which~\eqref{eqSecondMmt} implies  that $\chi(\GG(n,d))\leq q$ \whp (here ``smm'' is short for ``second moment method'').

\begin{table}[h!]
\begin{tabular}{|c|c|c|c|c|c|c|c|c|c|c|c|c|c|c|c|c|c|c|}\hline
$q$&3&4&5&6&7&8&9&10&11&12&13&14&15&16&17&18&19&20\\\hline
$d_{q,\textrm{smm}}$&-&6&11&16&21&27&33&39&46&52&59&66&73&81&88&96&104&111\\\hline
$d_q$&6&10&15&20&{\bf 25}&{\bf 31}&{\bf 37}&44&{\bf 50}&{\bf 57}&{\bf 64}&{\bf71}&{\bf78}&86&{\bf93}&{\bf101}&109&117\\\hline
\end{tabular}\\[4mm]
\caption{Bounds on the chromatic number of the random regular graph $\GG(n,d)$ for small $d$.}\label{Tab_reg}
\end{table}

The asymptotic lower bound~\eqref{eqEnhancedSmm} on $\chi(\GG(n,d))$, which was derived in~\cite{ColReg} via an extremely laborious first moment argument, follows from \Thm~\ref{Thm_reg} at the expense of just a brief calculation. (See Section~\ref{Sec_asymptotics}.)
\aco{That said, in the limit of large $q$ we do not improve over \eqref{eqEnhancedSmm}, which is conjectured to be optimal up to the precise value of the $o_q(1)$ error term.}

\begin{corollary}\label{Cor_reg}
If $d>(2q-1)\log q-1+o_q(1)$ then $\chi(\GG(n,d))>q$ \whp
\end{corollary}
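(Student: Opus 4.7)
By \Thm~\ref{Thm_reg}, it suffices to exhibit, for every sufficiently large $q$ and every $d>(2q-1)\log q-1+o_q(1)$, some $\alpha\in[0,1]$ at which $\Sigma_{d,q}(\alpha)<0$. Since $\Sigma_{d,q}=\min_{\alpha\in[0,1]}\Sigma_{d,q}(\alpha)\leq\Sigma_{d,q}(0)$, my plan is to plug $\alpha=0$ into~\eqref{eqSigma} and expand asymptotically in $q$.

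The inner sum at $\alpha=0$ has a transparent combinatorial meaning: reindexing $k=i+1$ and adding back the (vanishing) $k=0$ contribution gives
\[
\sum_{i=0}^{q-1}(-1)^i\bink{q}{i+1}\bc{1-(i+1)/q}^d \;=\; 1-q^{-d}\sum_{k=0}^{q}(-1)^k\bink{q}{k}(q-k)^d,
\]
which by inclusion--exclusion is exactly the probability $1-\Pr[\cU]$ that a uniformly random function $[d]\to[q]$ fails to be surjective. In the regime $d=\Theta(q\log q)$, successive Bonferroni terms $\bink{q}{k}(1-k/q)^d$ decay geometrically by factors of order $1/q$, so only the first two matter up to an $O(\log q/q^2)$ relative error. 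Concretely, writing $d=(2q-1)\log q+c$ with $c$ bounded and using $(1-2/q)/(1-1/q)=1-1/(q-1)$, a short calculation yields $\bink{q}{2}(1-2/q)^d/(q(1-1/q)^d)=1/(2q)+O(\log q/q^2)$, and hence
\[
\log\bc{1-\Pr[\cU]}\;=\;\log q+d\log(1-1/q)-\frac{1}{2q}+O\bc{\log q/q^2}.
\]

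Substituting this into~\eqref{eqSigma} and combining $d\log(1-1/q)$ with $-(d/2)\log(1-1/q)$ leaves
\[
\Sigma_{d,q}(0)\;=\;\log q+\frac{d}{2}\log(1-1/q)-\frac{1}{2q}+O\bc{\log q/q^2}.
\]
Expanding $-\log(1-1/q)=1/q+1/(2q^2)+O(1/q^3)$ and plugging in $d=(2q-1)\log q+c$ gives $\log q+(d/2)\log(1-1/q)=-c/(2q)+O(\log q/q^2)$, so
\[
\Sigma_{d,q}(0)\;=\;-\frac{c+1}{2q}+O\bc{\log q/q^2}.
\]
For large $q$ this is strictly negative whenever $c>-1+o_q(1)$, matching the hypothesis of the corollary; applying \Thm~\ref{Thm_reg} then finishes the proof.

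The only non-trivial bookkeeping is checking that the neglected higher Bonferroni terms and the tail of the Taylor series for $\log(1-1/q)$ contribute at most $O(\log q/q^2)$, so that the resulting shift in the effective constant $c$ is genuinely $o_q(1)$. Both facts follow from the geometric decay noted above, and I do not expect any real obstacle beyond this routine error analysis.
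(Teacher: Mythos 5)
Your proof is correct and takes essentially the same route as the paper's: plug a specific $\alpha$ into $\Sigma_{d,q}(\nix)$, write $d=(2q-1)\log q+O_q(1)$, and expand asymptotically to second order. The only substantive difference is the choice $\alpha=0$ in place of the paper's $\alpha=1/(2q)$ (and the cosmetic sign convention on the $O_q(1)$ constant $c$). Since $\Sigma_{d,q}\leq\Sigma_{d,q}(\alpha)$ for any fixed $\alpha$, either choice is admissible, and both yield the same leading-order result $\Sigma_{d,q}(\alpha)=-(c+1)/(2q)+\tilde O_q(q^{-2})$ once the conventions are aligned; the paper's $\alpha=1/(2q)$ merely kills the $1/(2q^2)$ term in the Taylor expansion of $\log(1-(1-\alpha)/q)$, slightly tidying the algebra. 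Your observation that the inner sum at $\alpha=0$ is exactly the probability that a uniformly random map $[d]\to[q]$ is not surjective is a nice framing of the inclusion--exclusion structure that the paper manipulates directly, and your justification that the Bonferroni tail is $O(q^{-2})$ relative to the leading term matches what the paper proves with \eqref{eqCor_reg_3}. No gap.
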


\subsection{The binomial random graph}\label{Sec_binom}
Locally the random regular graph is as `deterministic' as it gets: for all but a bounded number of exceptional vertices,
any bounded-depth neighbourhood is just a $d$-regular tree \whp\
By contrast, in the binomial random graph $\GG(n,d/n)$ the neighbourhoods  are random, distributed
as the trees generated by a Galton-Watson process with offspring distribution $\Po(d)$.
The value of the chromatic number predicted by the cavity method mirrors this local non-uniformity.
Indeed, while in the case of random regular graphs we obtained the scalar optimisation problem~\eqref{eqmySigma},
in the binomial case we face an optimisation problem over a probability measure on the unit interval.
To be precise, let $\fp$ be a probability distribution on $[0,1]$.
Moreover, let $(\ALPHA_i)_{i\geq1}$ be a family of independent random variables with distribution $\fp$.
Additionally, let $\vD\sim\Po(d)$ be independent of the $\ALPHA_i$.
Then we define
\begin{align}\label{eqERsigma0}
\Sigma^*_{d,q}(\fp)&=\Erw\brk{\log\left(\sum_{i=0}^{q-1}(-1)^i\bink q{i+1}\prod_{h=1}^{\vD}1-(i+1)(1-\ALPHA_h)/q\right)}
		-\frac d2\, \Erw\brk{\log\left(1-(1-\ALPHA_1)(1-\ALPHA_2)/q\right)}.
\end{align}
Setting
\begin{align}\label{eqERsigma}
\Sigma^*_{d,q}&=\inf_{\fp}\ \Sigma^*_{d,q}(\fp),&d_q^*&=\inf\cbc{d>0 \, :\, \Sigma^*_{d,q}<0},
\end{align}
we obtain the following lower bound on the chromatic number.

\begin{theorem}\label{Thm_ER}
If $q\geq 3$ and $d>d_q^*$ then $\chi(\GG(n,d/n))>q$ \whp
\end{theorem}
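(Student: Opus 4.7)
The plan is to adapt the Franz--Leone interpolation method to the annealed partition function $Z_n$ counting proper $q$-colourings of $\GG(n,d/n)$, in order to establish that for every probability distribution $\fp$ on $[0,1]$,
\begin{equation*}
\frac{1}{n}\log\Erw[Z_n]\leq\Sigma^*_{d,q}(\fp)+o(1).
\end{equation*}
Once this bound is available, taking the infimum over $\fp$ yields $\Erw[Z_n]\leq\exp(n\Sigma^*_{d,q}+o(n))$; for $d>d_q^*$ we have $\Sigma^*_{d,q}<0$, whence Markov's inequality gives $\Pr\brk{\chi(\GG(n,d/n))\leq q}\leq\Pr\brk{Z_n\geq 1}\leq\Erw[Z_n]=o(1)$, proving the theorem. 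This broad strategy parallels the proof of \Thm~\ref{Thm_reg}, with the new ingredients being the Poisson-distributed vertex degrees and the measure-valued optimisation variable $\fp$ in place of the scalar $\alpha$.

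To prove the interpolation bound, fix $\fp$ and sample an i.i.d.\ family $(\ALPHA_{v,h})_{v,h\geq 1}$ with law $\fp$, independent of the underlying random graph. For $t\in[0,1]$ construct an interpolating Poisson model on $[n]$: include $\Po((1-t)dn/2)$ ``real'' edges, each enforcing the hard constraint $\sigma(u)\neq\sigma(v)$, and independently at each vertex $v$ attach $\Po(td)$ ``cavity half-edges'' whose colour weights are built from $\ALPHA_{v,1},\ALPHA_{v,2},\ldots$; denote by $Z(t)$ the resulting soft partition function. At $t=0$ Poisson thinning recovers $Z_n$ up to a multi-edge/loop correction that is negligible after Poissonising $\GG(n,d/n)$, while at $t=1$ the model decouples across vertices and a direct law-of-large-numbers computation gives
\begin{equation*}
\frac{1}{n}\log\Erw[Z(1)]=\Sigma^*_{d,q}(\fp)+o(1),
\end{equation*}
where the two summands in~\eqref{eqERsigma0} arise respectively from the product-over-vertices inclusion--exclusion over $q$ colours and from the normalisation of the $\Po(dn/2)$ edge count. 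The heart of the argument is then to show that $\frac{\dd}{\dd t}\log\Erw[Z(t)]\geq 0$ on $(0,1)$, which upgrades the endpoint comparison into the desired inequality.

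Computing this derivative via elementary Poisson calculus, it equals the difference between the expected log-weight of one ``real'' edge and that of the two ``cavity'' half-edges that replace it at a uniformly random vertex; non-negativity reduces to a convexity/Cauchy--Schwarz-type inequality for the colour-biased marginals induced by the fields $(\ALPHA_{v,h})_h$. Verifying this sign cleanly, and uniformly in $\fp$, will be the main obstacle, and is precisely the step at which the 1RSB structure of $\Sigma^*_{d,q}(\fp)$ is required; secondary technical nuisances include a moment truncation of $\fp$ to control the tail of the Poisson degree $\vD$, and a small-subgraph analysis to dispose of loops and multi-edges introduced by the Poissonisation. Together, these yield the interpolation bound and, via the Markov step above, \Thm~\ref{Thm_ER}.
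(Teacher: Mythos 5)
Your proposal aims at the wrong functional, and this is a fatal error. You propose to prove
$\frac1n\log\Erw[Z_n]\leq\Sigma^*_{d,q}(\fp)+o(1)$ for the hard-constraint partition function $Z_n$
and then close with Markov, but the left-hand side is the \emph{annealed} free energy, which is
computable exactly by the usual first-moment calculation. For the binomial model it converges to
$\log q-\tfrac d{2q}$ (and to $\log q+\tfrac d2\log(1-1/q)$ in the regular case), and these quantities
are \emph{strictly larger} than $\Sigma^*_{d,q}$ in precisely the regime where the theorem is
interesting: this is the entire content of the improvement over the first-moment bound recorded in
Table~\ref{Tab_reg} (e.g.\ $q=7$, where $d_7=25$ while the first-moment bound fails for
$d=25$). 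So the claimed interpolation inequality cannot hold as stated. The
Franz--Leone/Guerra/Panchenko--Talagrand interpolation bounds the \emph{quenched} free energy
$\frac1n\Erw[\log Z]$, not $\frac1n\log\Erw[Z]$, and indeed your own $t=1$ endpoint formula
$\Sigma^*_{d,q}(\fp)=\Erw[\log(\cdots)]-\tfrac d2\Erw[\log(\cdots)]$ is visibly an expected
logarithm, not the logarithm of an expectation --- the product structure over vertices only
linearises under $\Erw\log$, not under $\log\Erw$.

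Switching to the quenched free energy creates two further problems that the paper handles and your
sketch does not. First, with hard colouring constraints $\log Z_n=-\infty$ with probability bounded
away from zero (any non-colourable instance), so $\Erw[\log Z_n]$ is not even finite and the Markov
step has no analogue; the paper replaces hard constraints by the Potts weight
$1-(1-\eul^{-\beta})\vecone\{\sigma(v)=\sigma(w)\}$, which makes $\log Z_\beta$ Lipschitz under edge
insertion (\eqref{eqLip}), yielding Azuma concentration (\Prop~\ref{Lemma_Azuma}), and then the
implication $\Erw[\log Z_\beta(\GG)]<-\delta n\Rightarrow Z_\beta(\GG)<1$ whp
$\Rightarrow\chi(\GG)>q$ whp (\Cor~\ref{Cor_q}) does the job that you tried to delegate to Markov.
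Second, an interpolation with i.i.d.\ external fields $\ALPHA_{v,h}$ attached per vertex is a
replica-\emph{symmetric} scheme and, even after fixing the quenched/annealed issue, would only
reproduce the first-moment bound. To reach $\Sigma^*_{d,q}(\fp)$ one needs the 1RSB cascade: a
hidden variable node $s$ weighted by the random Poisson--Dirichlet measure $\GAMMA$, fields
$\RHO_{i,s,h}$ that are conditionally i.i.d.\ given $s$, and \Lem~\ref{Lemma_PD} to collapse the
$\GAMMA$-mixture into a $y$-th moment; only after the joint limits $\beta\to\infty$, $y\to0$
(\Prop~\ref{Prop_zero}) does the formula $\Sigma^*_{d,q}(\fp)$ emerge. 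You flag this step as ``where
the 1RSB structure is required,'' but the scheme you set up does not actually contain that
structure, so the $t=1$ evaluation would not yield the right expression.

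In short: replace $\log\Erw$ by $\Erw\log$ throughout; replace hard constraints by the
finite-$\beta$ Potts weight so that $\log Z_\beta$ is finite, Lipschitz, and concentrated; replace
the i.i.d.\ fields by the Poisson--Dirichlet--mixed cascade of fields; and replace Markov by the
concentration-plus-$Z_\beta<1$ argument. With those repairs the outline matches the paper's proof.
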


\noindent
Zdeborov\'a and Krzakala predict that this bound is tight for all $q\geq3$~\cite{ZK}.

Due to the optimisation over distributions $\fp$, the value $d_q^*$  may be hard to evaluate.
The physics literature relies upon a numerical heuristic called population dynamics~\cite{MM} to tackle such optimisation problems,
but of course there is no general guarantee that the true optimiser will be found.
Yet fortunately \Thm~\ref{Thm_ER} shows that {\em any} distribution $\fp$ yields an upper bound on $d_q^*$, and thus a lower bound on the chromatic number.
In particular, we could try atoms $\fp=\delta_\alpha$ with $\alpha\in[0,1]$.
For instance, we find that $\Sigma_{4.697,3}(\delta_{0.25})<0$, whence $d_3^*\leq4.697$; see Figure~\ref{Fig_3col}.
Even this quick bound significantly improves over the best prior bound~\eqref{eq3} from~\cite{DuboisMandler}, based on a tricky first moment calculation, and comes within a whisker of the value $d_3^*\approx 4.687$ obtained via population dynamics~\cite{ZK}.
In principle, the $4.697$ bound could be sharpened by optimising over distributions with a (small) finite support, but such a calculation seems to require computer assistance.

\begin{figure}[ht!]
\includegraphics[height=6cm]{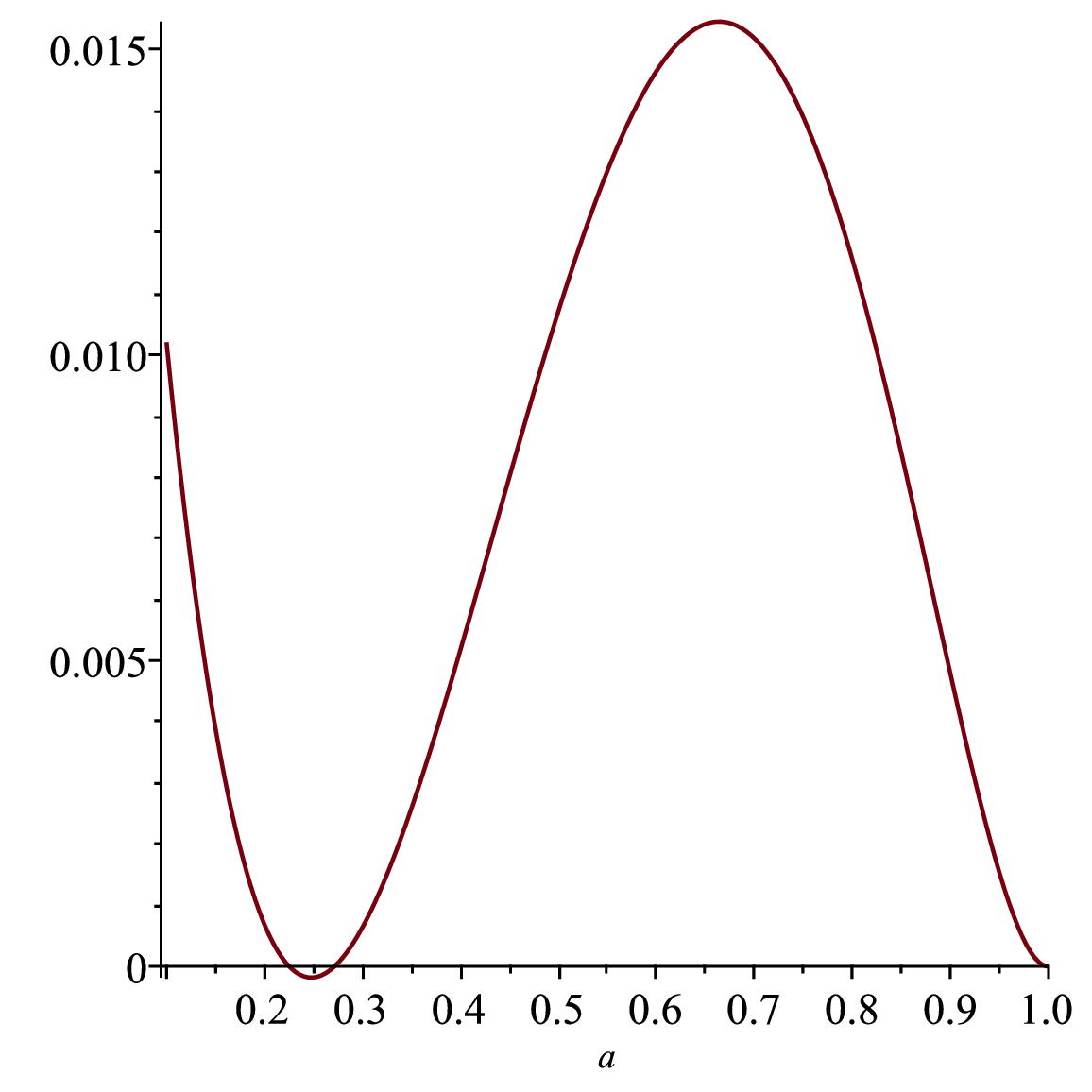}\caption{The function $\Sigma_{4.697,3}(\delta_\alpha)$.}\label{Fig_3col}
\end{figure}

Similarly, substituting a suitable atom $\fp=\delta_\alpha$  into \eqref{eqERsigma} suffices to rederive the large-$q$ asymptotic lower bound on the chromatic number from~\eqref{eqEnhancedSmm}, originally established in~\cite{ACOcovers} via a complicated first moment argument. 
\aco{As in the regular case we do not improve over \eqref{eqEnhancedSmm} asymptotically for large $q$; \csg{again}, \eqref{eqEnhancedSmm} is conjectured to be optimal up to the precise value of the term hidden in the $o_q(1)$.}

\begin{corollary}\label{Cor_ER}
If $d>(2q-1)\log q-1+o_q(1)$ then $\chi(\GG(n,d/n))>q$ \whp
\end{corollary}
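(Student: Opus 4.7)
\noindent\textit{Proof plan.}
The plan is to apply \Thm~\ref{Thm_ER} with the Dirac atom $\fp=\delta_0$; it then suffices to verify that $\Sigma^*_{d,q}(\delta_0)<0$ whenever $d>(2q-1)\log q - 1+o_q(1)$. Substituting $\ALPHA_i\equiv 0$ into \eqref{eqERsigma0} collapses the formula to
\[
\Sigma^*_{d,q}(\delta_0) \;=\; \Erw\brk{\log \Phi_q(\vD)} - \tfrac d 2\log\bc{1-1/q},\qquad \Phi_q(D)\;:=\; \sum_{i=0}^{q-1}(-1)^i\bink{q}{i+1}\bc{1-\tfrac{i+1}{q}}^D,
\]
with $\vD\sim\Po(d)$. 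Inclusion--exclusion identifies $\Phi_q(D) = 1 - q!\,S(D,q)/q^D$ with the probability that a uniformly random function $[D]\to[q]$ fails to be surjective, so $\Phi_q\in(0,1]$ and $\log\Phi_q\leq 0$ is well-defined.

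The first step is to Poisson-de-randomise and show $\Erw[\log\Phi_q(\vD)] = \log\Phi_q(d)+o(1/q)$ in the regime $d=\Theta(q\log q)$. A direct computation from the explicit formula yields $(\log\Phi_q)''(d)=O(q^{-3})$, so on the typical event $|\vD-d|\leq d^{2/3}$ a second-order Taylor expansion contributes at most $\tfrac{d}{2}(\log\Phi_q)''(d)+O(d^{2}/q^{5})=O(\log q/q^2)=o(1/q)$. Chernoff's inequality bounds $\Pr[|\vD-d|>d^{2/3}]$ by $\exp(-\Omega(d^{1/3}))$, and combined with the crude estimate $|\log\Phi_q(D)|=O(1+D/q)$ this renders the tail contribution super-polynomially small.

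The second step is an asymptotic expansion of $\log\Phi_q(d)$. Factoring out the dominant $i=0$ summand,
\[
\Phi_q(d) \;=\; q(1-1/q)^d\brk{1 - \tfrac{\binom{q}{2}}{q}\bc{\tfrac{1-2/q}{1-1/q}}^d + O(q^{-2})}.
\]
For $d=(2q-1)\log q - 1 + o_q(1)$ one has $\bc{(1-2/q)/(1-1/q)}^d = (1-1/(q-1))^d = q^{-2}(1+o(1))$, so the bracket equals $1-1/(2q)+O(q^{-2})$ and hence $\log\Phi_q(d) = \log q + d\log(1-1/q) - 1/(2q) + O(q^{-2})$. Combining with Step 1,
\[
\Sigma^*_{d,q}(\delta_0) \;=\; \log q + \tfrac{d}{2}\log(1-1/q) - \tfrac{1}{2q} + o(1/q).
\]
Expanding $\log(1-1/q)=-1/q-1/(2q^2)+O(q^{-3})$ and solving for the zero of the right-hand side gives the critical density $d^\star=(2q-1)\log q - 1 + O(1/q)$, so $\Sigma^*_{d,q}(\delta_0)<0$ for every $d>(2q-1)\log q - 1+o_q(1)$.

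The main obstacle is Step 1, the Poisson-concentration argument: because $\Phi_q$ is a signed alternating sum that is close to zero in the relevant regime, one must carefully truncate both tails of the Poisson distribution since small $\vD$ sends $\Phi_q(\vD)\to 1$ while very large $\vD$ sends $\log\Phi_q(\vD)\to-\infty$. The naive Jensen upper bound $\Erw[\log\Phi_q(\vD)]\leq\log\Erw[\Phi_q(\vD)]=\log(1-(1-e^{-d/q})^q)$ is too crude, recovering only the first-moment threshold $d>(2q-1)\log q+o_q(1)$; the non-trivial $-1$ improvement requires the sharper Taylor plus tail estimate above. Step 2 reduces to the short asymptotic calculation underlying \Cor~\ref{Cor_reg}.
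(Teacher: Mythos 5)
Your proposal is correct and reaches the same threshold, but takes a genuinely different route from the paper's. The paper substitutes $\fp=\delta_\alpha$ with the slightly perturbed atom $\alpha=1/(2q)$, the same choice used in \Cor~\ref{Cor_reg}; this guarantees a uniform lower bound $(2q)^{-\vD}\leq \sum_{i}(-1)^i\binom q{i+1}(1-(i+1)(1-\alpha)/q)^{\vD}\leq1$ straight from inclusion--exclusion (no surjectivity interpretation is needed), so the Poisson fluctuation $\vec\Delta=\vD-d$ can be handled by a crude truncation $\abs{\vec\Delta}\leq10\sqrt q\log q$ followed by a term-by-term expansion with the order-of-magnitude bound $\vec\Delta/q=\tilde O_q(q^{-1/2})$. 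You instead take $\alpha=0$, identify the inner sum with the non-surjectivity probability $\Phi_q(D)$, and de-randomize the Poisson average via a second-order Taylor expansion of $\log\Phi_q$ together with the bound $(\log\Phi_q)''=O(q^{-3})$ and the Poisson variance $\Erw[(\vD-d)^2]=d$. Your route is tighter in principle (error $\tilde O_q(q^{-2})$ versus the paper's $\tilde O_q(q^{-3/2})$) and more conceptual because of the non-surjectivity interpretation, but shifts the technical burden to verifying the second-derivative estimate and its uniformity over the truncation window $\abs{\vD-d}\leq d^{2/3}$, which you state but do not carry out; the paper's argument avoids that calculation entirely at the cost of reusing the $\alpha=1/(2q)$ choice and the cruder $\tilde O_q(q^{-1/2})$ fluctuation bound. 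Two small remarks: the brackets you call $O(q^{-2})$ actually carry $\log$ factors and should be $\tilde O_q(q^{-2})$, and your side-remark about Jensen's inequality ``recovering only the first-moment threshold'' overstates what Jensen gives here (it is actually strictly weaker than the first-moment bound at this level of precision), but neither affects the argument.
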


\subsection{Related work}\label{Sec_related}
The history of the random graph colouring problem is long and distinguished.
Improving a prior result of Matula~\cite{Matula}, \Bollobas~\cite{BBColor} determined the chromatic number of the dense binomial random $\GG(n,p)$ for fixed $p\in(0,1)$, up to a multiplicative error of $1+o(1)$.
\Kucera\ and Matula obtained the same result via a different proof~\cite{MatulaKucera}.
\Luczak~\cite{LuczakColor} extended the approach from~\cite{Matula,MatulaKucera} to sparse random graphs.
His main result shows that \whp\ for $p=o(1)$,
\begin{align}\label{eqTomasz}
\chi(\GG(n,p))&=\bc{1+O\bcfr{\log\log np}{\log^2 np}}\frac{\log(np)}{2np}.
\end{align}
Particularly for small edge probabilities $p$ the bound~\eqref{eqTomasz} is not quite satisfactory, as a result of Alon and Krivelevich~\cite{AlonKriv} shows that the chromatic number of $\GG(n,p)$ is concentrated on two consecutive integers if $p\leq n^{-1/2-\Omega(1)}$.
Seizing upon techniques from~\cite{AchNaor,AlonKriv}, Coja-Oghlan, Panagiotou and Steger~\cite{Angelika} determined a set of three consecutive integers on which the chromatic number concentrates for $p\leq n^{-3/4-\Omega(1)}$.
Furthermore, the aforementioned result of Achlioptas and Naor~\cite{AchNaor} determines the two integers on which $\chi(\GG(n,d/n))$ concentrates, when $d>0$ is fixed.
Yet in this case it is widely conjectured that there exists a sharp threshold for $q$-colourability, i.e., 
that for each $q\geq3$ there exists $d_q^\star>0$ such that $\chi(\GG(n,d/n))\leq q$ if $d<d_q^\star$ while 
$\chi(\GG(n,d/n))> q$ if $d>d_q^\star$.
Clearly, if such a $d_q^\star$ exists then the chromatic number would actually concentrate on a single integer for almost all $d\in(0,\infty)$.
Towards the sharp threshold conjecture, Achlioptas and Friedgut~\cite{AchFried} established the existence of a non-uniform threshold sequence for every $q\geq3$.
Physics predictions~\cite{ZK} assert that the $q$-colourability threshold $d_q^\star$ coincides with $d_q^*$  from~\eqref{eqERsigma} for all $q\geq3$.

Concerning the random regular graph $\GG(n,d)$, Frieze and \Luczak~\cite{FriezeLuczak} obtained an asymptotic bound akin to~\eqref{eqTomasz} for $d=o(n^{1/3})$, which Cooper, Frieze, Reed and Riordan~\cite{CFRR} extended to $d\leq n^{1-\Omega(1)}$. 
Further, Krivelevich, Sudakov, Vu and Wormald~\cite{KSVW} obtained an asymptotic formula akin to \Bollobas'
result~\cite{BBColor} for degrees $n^{6/7+\Omega(1)}\leq d\leq 0.9n$.
The best prior bounds on $\chi(\GG(n,d))$ with fixed $d$ were stated in \Sec~\ref{Sec_background}.

The physicists' cavity method has inspired a great deal of rigorous work.
Perhaps the most prominent example is the proof of the $k$-SAT threshold conjecture for large $k$ by Ding, Sly and Sun~\cite{DSS3}.
The proof of the lower bound on the $k$-SAT threshold is based on an impressive second moment argument, while the proof of the upper bound relies on the interpolation method.
The way we use the interpolation method here is reminiscent of its application in~\cite{DSS3}.
Further problems in which the 1RSB cavity method has been vindicated include the independent set problem on random regular graphs~\cite{DSS2}, the regular $k$-NAESAT problem~\cite{DSS1} and the regular $k$-SAT problem~\cite{Kosta}.

As for the history of the interpolation method itself, Guerra~\cite{Guerra} invented the technique in order to study the free energy of the Sherrington-Kirkpatrick spin glass model.
The interpolation method went on to become a mainstay of the mathematical physics of spin glasses (see, e.g.,~\cite{Panchenko}).
Franz and Leone~\cite{FranzLeone} pioneered the use of the interpolation method for combinatorial problems.
The approach was further elaborated and generalised by Panchenko and Talagrand~\cite{PanchenkoTalagrand}, and their
version of the interpolation method was applied to the $k$-SAT problem in~\cite{DSS3}.
We will use (and adapt) the Panchenko--Talagrand version as well.
Moreover, an important contribution of Bayati, Gamarnik and Tetali~\cite{bayati} applied a different 
variant of the interpolation method to prove{, e.g.,} the {\em existence} of the 
limit $\lim_{n\to\infty}\alpha(\GG)/n$ of the normalised independence number of the random graph 
$\GG=\GG(n,d)$ or $\GG=\GG(n,d/n)$.
This version of the interpolation method does not provide estimates of the value of such limits.
\aco{Sly, Sun and Zhang~\cite{SSZ}} combined the combinatorial interpolation scheme from~\cite{bayati} with the interpolation arguments from \cite{FranzLeone,PanchenkoTalagrand} to derive bounds on the partition functions of random regular (and uniform) hypergraphs.
For instance, \cite[\Thm~E.3]{SSZ} shows that the formula provided by the 1RSB cavity method yields an upper bound on the partition function for a variety of models.
These models include the Potts antiferromagnet on the random regular graph, which \csg{plays} a prominent role in the present paper as well.
In particular, for the random regular graph $\GG(n,d)$ \Cor~\ref{Prop_PD} below, an important intermediate step towards the proof of \Thm~\ref{Thm_reg},  is a special case of~\cite[\Thm~E.3]{SSZ}.
Furthermore, building upon~\cite{Panchenko2}, Coja-Oghlan and Perkins~\cite{COP} recently used the interpolation method to derive precise variational formulas for
the partition functions of random regular (hyper-)graph models.
The models studied in that paper include the Potts antiferromagnet as well, and the random regular graph version of \Cor~\ref{Prop_PD} could be derived from \cite[Theorem 7.6]{COP} with little effort.
But since the expositions of the 1RSB interpolation method for random regular graphs in~\cite{COP,SSZ} and for binomial random graphs in~\cite{PanchenkoTalagrand} are rather brisk, and since, strictly speaking, \cite{PanchenkoTalagrand} does not cover the Potts model, we present the interpolation method from scratch, with a view to facilitating future uses of the method in combinatorics. 
{
}

\subsection{Preliminaries and notation}
In order to avoid replications and case distinctions, throughout the paper we use the shorthand $\GG$ to denote
either the random regular graph $\GG(n,d)$ or the binomial random graph $\GG(n,d/n)$.
Most of the statements and arguments in the following sections are generic and apply to either model.
\aco{There are just a few steps where we will need to treat the two models separately.}
If $\GG=\GG(n,d/n)$ is the binomial random graph then we let $\vD\disteq\Po(d)$ be a Poisson variable, while
in the case of the random regular graph we let $\vD=d$ deterministically.
In either case we let $(\vD_i)_{i\geq1}$ be independent copies of $\vD$.

As per common practice, we use the $O(\nix)$-notation to refer to the limit $n\to\infty$.
In our calculations we tacitly assume that $n$ is sufficiently large for the various estimates to be valid.
In addition, in \Sec~\ref{Sec_asymptotics} we use $O_q(\nix)$-notation to refer to the limit of large $q$ as in Corollaries~\ref{Cor_reg} and~\ref{Cor_ER}.

For a finite set $\Omega\neq\emptyset$ we denote by $\cP(\Omega)$ the set of probability distributions on $\Omega$.
We identify $\cP(\Omega)$ with the standard simplex in $\RR^{\Omega}$.
Accordingly, $\cP(\Omega)$ inherits its topology from $\RR^{\Omega}$.
Further, we write $\cP^2(\Omega)$ for the space of probability measures on $\cP(\Omega)$.
We endow $\cP^2(\Omega)$ with the weak topology, thus obtaining a Polish space.
Additionally, $\cP^3(\Omega)$ \aco{denotes} the space of probability measures on $\cP^2(\Omega)$.

For a probability measure $\mu$ on a discrete probability space $\cX$ we denote by
$\SIGMA^\mu,\SIGMA^{\mu,1},\SIGMA^{\mu,2},\ldots\in\cX$ independent samples drawn from $\mu$.
Where the reference to $\mu$ is apparent we omit $\mu$ from the superscripts and 
just write $\SIGMA$, $\SIGMA^1$, etc.
For a function $X:\Omega^\ell\to\RR$ we denote the expectation of
$X(\SIGMA^1,\ldots,\SIGMA^\ell)$ by  $\scal X\mu$.
Thus,
\begin{align}\label{eqbck}
\scal X\mu&=\scal{X(\SIGMA^1,\ldots,\SIGMA^\ell)}\mu=\sum_{\sigma^1,\ldots,\sigma^\ell\in\cX}
		X(\sigma^1,\ldots,\sigma^\ell)\prod_{i=1}^\ell\mu(\sigma^i).
\end{align}

Finally, we need the following version of a Markov random field.
A {\em factor graph} 
\[ \cG=(V,\, C,\, (\Omega_v)_{v\in V},\, (\partial a)_{a\in C},\, (\psi_a)_{a\in C})\]
 consists of
\begin{itemize}
\item a finite set $V$ of {\em variable nodes},
\item a finite set $C$ of {\em constraint nodes},
\item a finite or countable range $\Omega_v$ for each $v\in V$,
\item a subset $\partial a\subset V$ for each $a\in C$,
\item a {\em weight function} $\psi_a:\prod_{v\in\partial a}\Omega_v\to[0,\infty)$ for each $a\in C$.
\end{itemize}
A factor graph can be represented by a bipartite graph with vertex sets $V$ and $C$ where the neighbourhood of
$a\in C$ is just $\partial a$.
We further define the function $\psi_{\cG}:\prod_{v\in V}\Omega_v\to\aco{[0,\infty)}$ by 
\begin{equation}
\label{psiG-def}
 \sigma\mapsto\prod_{a\in C}\psi_a(\sigma_{\partial a})
\end{equation}
for all $\sigma = (\sigma_v)_{v\in V}\in \prod_{v\in V}\Omega_v$,
where $\sigma_{\partial a}$ denotes the restriction of $\sigma$ to $\partial a$.
Finally, the \emph{partition function} $Z(\cG)$ of $G$ is defined by
\begin{align}\label{eqZGeneral}
Z(\cG)&=\sum_{\sigma\in \prod_{v}\Omega_v}\psi_{\cG}(\sigma).
\end{align}
If $0<Z(\cG)<\infty$ then $\cG$ gives rise to a probability distribution
\begin{align}\label{eqBoltzmannGeneral}
\mu_{\cG}(\sigma)&=\psi_{\cG}(\sigma)/Z(\cG)&&\mbox{ for }\sigma\in \prod_{v\in V}\Omega_v
\end{align}
that is called the {\em Boltzmann distribution} of $\cG$.

\section{Outline}\label{Sec_overview}

\noindent
We proceed to survey the proofs of the main results, deferring most technical details to the following sections.

\subsection{The Potts antiferromagnet}\label{Sec_The_Potts_antiferromagnet}
The goal is to derive a lower bound on the chromatic number of the random graph $\GG=\GG(n,d)$ or $\GG=\GG(n,d/n)$.
We tackle this problem indirectly by way of a weighted version of the $q$-colourability problem.
To be precise, the {\em $q$-spin Potts antiferromagnet at inverse temperature $\beta>0$} on a multigraph $G=(V,E)$ is the probability distribution $\mu_{G,\beta}$ on $[q]^V$ defined by
\begin{align}\label{eqPotts1}
\mu_{G,\beta}(\sigma)&=\frac1{Z_\beta(G)}\prod_{vw\in E(G)}1-(1-\eul^{-\beta})\vecone\{\sigma(v)=\sigma(w)\},&&\mbox{where}\\
Z_\beta(G)&=\sum_{\sigma\in[q]^V}\prod_{vw\in E(G)}1-(1-\eul^{-\beta})\vecone\{\sigma(v)=\sigma(w)\}.\label{eqPotts2}
\end{align}
Here it is understood that each edge of $G$ contributes to the products in~\eqref{eqPotts1} and \eqref{eqPotts2} according to its multiplicity.
The strictly positive quantity $Z_\beta(G)$, known as the {partition function}, ensures that $\mu_{G,\beta}$ is a probability measure.
Moreover, we observe that the probability mass $\mu_{G,\beta}(\sigma)$ is governed by the number of edges that $\sigma$ renders monochromatic.
Indeed, the product in \eqref{eqPotts1} imposes an $\exp(-\beta)$ `penalty factor' for every monochromatic edge.
Thus, larger values of $\beta$ deliver higher penalties to monochromatic edges.
In particular, if $\sigma$ is a $q$-colouring of $G$ then the product evaluates to one.
Therefore, the partition function is lower-bounded by the total number of $q$-colourings of $G$
and $\lim_{\beta\to\infty}Z_\beta(G)$ equals the number of $q$-colourings.
Hence, $\chi(G)>q$ if there exists $\beta>0$ such that $Z_\beta(G)<1$.

Thus, our approach is to show that there exists $\beta>0$ such that
if $d$ exceeds the thresholds stated in \Thm s~\ref{Thm_reg} and~\ref{Thm_ER} then \whp\
 $\log Z_\beta(\GG)<0$.
To facilitate the analysis of $Z_\beta$ we will work with slightly modified and (for our purposes)  more amenable random graph models.
Specifically, fixing $\eps>0$, we let
\begin{equation}\label{eqm}
\vm\sim\Po_{\leq dn/2}((1-\eps)dn/2)
\end{equation}
be a Poisson variable conditioned on not exceeding $dn/2$.
Define $\G(n,d/n)$ as the random multigraph on the vertex set $V_n=\{v_1,\ldots,v_n\}$ obtained by 
inserting $\vm$ independent random edges $e_1,\ldots,e_{\vm}$ chosen uniformly out of all $\bink n2$ possible edges.
Similarly, let $\G(n,d)$ be the random multigraph obtained from the following version of the configuration model:
choose a matching $\vec\Gamma$ of size $\vm$ of the complete graph on $V_n\times[d]$ uniformly at random.
Then obtain $\G(n,d)$ by inserting  one $vw$-edge for every matching edge $\{(v,i),(w,j)\}\in\vec\Gamma$.
In order to avoid case distinctions, we use the symbol $\G$ to denote either $\G(n,d/n)$ or $\G(n,d)$.

Working with the Potts antiferromagnet rather than directly with the graph colouring problem offers two advantages.
First, the partition function $Z_\beta(G)$ is always positive and $\log Z_\beta(G)$ enjoys a Lipschitz property with respect to edge additions/deletions.
Indeed, adding or deleting a single edge can change $\log Z_\beta(G)$ by an additive term of
at most $\beta$ in absolute value. (See \aco{\Sec~\ref{eqLip}} below.)
Second, as a consequence of this Lipschitz property it is easy to prove that $\log Z_\beta(\GG)$ is tightly concentrated about its expectation.
Although similar statements already appear in the literature (e.g.,~\cite{Cond,COP}), we include the proof for completeness.

\begin{proposition}\label{Lemma_Azuma}
	For any $\eps,\delta,\beta>0$  there is $\xi>0$ such that \aco{for sufficiently large $n$ we have}
\begin{align}\label{eqLemma_Azuma}
\pr\brk{\abs{\log Z_\beta(\GG)-\Erw\brk{\log Z_\beta(\GG)}}>\delta n}&\leq\exp(-\xi n),&
\pr\brk{\abs{\log Z_\beta(\G)-\Erw\brk{\log Z_\beta(\G)}}>\delta n}&\leq\exp(-\xi n).
\end{align}
\end{proposition}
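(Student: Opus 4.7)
The plan is to apply Azuma--Hoeffding to a Doob martingale adapted to the randomness building $\GG$ or $\G$. The essential input is the Lipschitz estimate hinted at in the text: for any multigraph $G$ and any edge $e$, adding or removing $e$ multiplies every term of $Z_\beta$ by a factor in $[\eul^{-\beta},1]$, so that
\[
\abs{\log Z_\beta(G)-\log Z_\beta(G-e)}\leq\beta.
\]
Once this is recorded, the remaining task is to identify, for each of the four models hidden behind $\GG$ and $\G$, a sequence of (nearly) independent random choices whose coordinate-wise Lipschitz constants are $O(\beta)$, and to feed these into Azuma.

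For the binomial models $\GG(n,d/n)$ and $\G(n,d/n)$ I would proceed in two steps. First, Chernoff's inequality confines the total number of edges $M$ to the window $M\leq dn$ with probability $1-\exp(-\Omega(n))$ (for $\G$ the bound $\vm\leq dn/2$ is built in). Second, conditional on $M=m$ with $m\leq dn$, the edges form either $m$ i.i.d.\ uniform samples (for $\G$) or a uniform $m$-subset of $\bink n2$ (for $\GG$); in either case a Doob martingale revealing the edges one at a time has $\beta$-bounded increments, and Azuma yields
\[
\pr\brk{\abs{\log Z_\beta-\Erw\brk{\log Z_\beta\mid M=m}}>\delta n/2\,\big|\,M=m}\leq\exp(-\Omega(n)).
\]
A final invocation of the Lipschitz estimate (now applied to the map $m\mapsto\Erw\brk{\log Z_\beta\mid M=m}$, which is itself $\beta$-Lipschitz) together with the concentration of $M$ around $(1-\eps)dn/2$ upgrades the conditional bound to the unconditional one claimed in \eqref{eqLemma_Azuma}.

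For the random regular models $\GG(n,d)$ and $\G(n,d)$ I work inside the configuration model on the half-edge set $V_n\times[d]$ and transfer to the simple random regular graph via the standard fact that the configuration model is simple with probability $\Theta(1)$, which preserves exponential tail bounds up to a factor of $\Theta(1)$. The subtlety is that the matching edges are not independent, so a naive edge-exposure martingale does not immediately have bounded increments. I would handle this with a switching coupling: replacing a single matching edge forces at most one additional matching edge to move in order to keep the remainder a valid matching, so the Doob martingale revealing matching edges one at a time has increments bounded by $2\beta$. Azuma over the $O(n)$ matching edges then produces $\exp(-\Omega(n))$ concentration, and a final Lipschitz step absorbs the randomness in $\vm$ for the $\G(n,d)$ variant.

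The main obstacle I expect is exactly this last point: verifying that the Doob increments for the uniform matching on the configuration model are $O(\beta)$, since the dependence between matching edges has to be handled by an explicit coupling rather than by appealing to product structure. Everything else is a routine combination of Chernoff bounds (to pin down the number of edges) and Azuma (to exploit the Lipschitz property of $\log Z_\beta$), consistent with the authors' remark that closely related statements already exist in the literature.
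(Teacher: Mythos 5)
Your proof is correct and follows essentially the same route as the paper's: the Lipschitz property \eqref{eqLip}, Azuma on an edge-exposure (respectively matching-edge-exposure) Doob martingale, a Chernoff bound to pin down the number of edges, and a final transfer step to move from the conditioned/configuration model to the target graph via the $\Omega(1)$-probability simplicity event. You also correctly identify the switching coupling as the only point requiring real care.

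A few small differences worth noting. For $\GG(n,d/n)$ conditioned on $\vM=m$, the paper does \emph{not} run Azuma directly on the uniform $m$-subset of $\binom{n}{2}$, as you propose; it instead runs Azuma on the multigraph $\G_{n,m}$ built from $m$ i.i.d.\ uniform edges (a genuine product structure, hence plain $\beta$-bounded increments) and then transfers to the simple graph via the event $\cS$ that $\G_{n,m}$ is simple, using $\pr[\cS]=\Omega(1)$ together with the a priori bound $\abs{\log Z_\beta}=O(n)$. Your direct approach works too, but it quietly requires a swap coupling for the uniform $m$-subset analogous to the configuration-model switching, giving $2\beta$- rather than $\beta$-bounded increments; this is a harmless constant, but it should be stated. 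Conversely, you are more explicit than the paper in the configuration-model case: the paper asserts the Doob increments are $\leq\beta$ by citing \eqref{eqLip}, whereas the standard switching coupling (as you describe) yields the correct bound $2\beta$; again only a constant factor, so no consequence. Finally, a small slip: for $\GG(n,d/n)$ the edge count $\vM$ concentrates around $dn/2$, not $(1-\eps)dn/2$ — the latter is the mean of $\vm$ in the $\G$ model. None of these affects the validity of your argument.
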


\noindent
\Prop~\ref{Lemma_Azuma} implies that the partition functions of $\GG$ and $\G$ do not differ too much.

\begin{corollary}\label{Cor_Azuma}
For any $\beta>0$  we have
$\limsup_{\eps\to0}\limsup_{n\to\infty}\frac1n\abs{\Erw\brk{\log Z_\beta(\GG)}-\Erw\brk{\log Z_\beta(\G)}}=0.$
\end{corollary}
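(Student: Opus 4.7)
The plan is to combine a Lipschitz estimate on $\log Z_\beta$ with a coupling of $\GG$ and $\G$ whose symmetric difference is small in expectation. First, for any multigraph $G$ on $V_n$ and any single edge $e=\{u,v\}$, the weight factor $\psi_e(\sigma)=1-(1-\eul^{-\beta})\vecone\{\sigma(u)=\sigma(v)\}$ lies in $[\eul^{-\beta},1]$, so $Z_\beta(G+e)\in[\eul^{-\beta},1]\cdot Z_\beta(G)$ and hence $\abs{\log Z_\beta(G+e)-\log Z_\beta(G)}\leq\beta$. Iterating yields
\[
\abs{\log Z_\beta(G)-\log Z_\beta(G')}\leq \beta\cdot\Delta(G,G')
\]
for multigraphs $G,G'$ on $V_n$, where $\Delta(G,G')$ is the minimum number of single-edge insertions and deletions needed to transform $G$ into $G'$. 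Thus it is enough to build, on a common probability space, a coupling of $(\GG,\G)$ satisfying $\Erw[\Delta(\GG,\G)]=O(\eps n)+o(n)$, after which $\abs{\Erw[\log Z_\beta(\GG)]-\Erw[\log Z_\beta(\G)]}\leq \beta\cdot\Erw[\Delta(\GG,\G)]$ and the result follows by dividing by $n$ and sending $n\to\infty$ and then $\eps\to0$.

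For the binomial pair, the edge count $\vM\sim\Bin(\bink n2,d/n)$ of $\GG(n,d/n)$ has mean $d(n-1)/2$, while $\vm$ has mean $(1-\eps)dn/2+o(1)$, suggesting a coupling in which $\G$ sits, up to its $O(1)$ expected multi-edges, inside $\GG$. Concretely, I would first sample $\G(n,d/n)$ as $\vm$ i.i.d.\ uniform edges, simplify (the $O(1)$ expected multi-edges cost $O(1)$ in $\Delta$), and then augment the underlying simple graph by $\vM-\vm$ further uniformly random non-edges to produce a simple graph. A brief verification using the fact that both models, conditioned on the number of edges, are uniform over simple graphs of that size confirms that the augmented graph has the law of $\GG(n,d/n)$; and $\Erw[\abs{\vM-\vm}]\leq \eps dn/2+o(n)$ then gives the required bound on $\Erw[\Delta]$.

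For the regular pair, I would route through the full configuration model $\G^{\mathrm{cm}}(n,d)$ given by a uniform perfect matching of size $dn/2$ on $V_n\times[d]$. Deleting a uniformly random subset of $dn/2-\vm$ matching edges from this perfect matching produces a graph distributed as $\G(n,d)$, so $\Delta(\G,\G^{\mathrm{cm}})\leq dn/2-\vm$ with expectation $\eps dn/2+o(n)$. On the other side, $\GG(n,d)$ is $\G^{\mathrm{cm}}(n,d)$ conditioned on being simple; since this event has probability $\Theta(1)$ and the expected numbers of self-loops and multi-edges are $O(1)$, I would couple $\G^{\mathrm{cm}}(n,d)$ and $\GG(n,d)$ so as to differ in $O(1)$ edges in expectation. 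Combining both steps via the triangle inequality for $\Delta$ yields the desired coupling between $\G(n,d)$ and $\GG(n,d)$.

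The main obstacle is the careful handling of the marginal distributions and of tail events. Verifying that the ``delete a uniform subset'' construction for $\G(n,d)$ really gives the correct law, and similarly for the augmentation in the binomial case, demands a short but meticulous combinatorial check. More seriously, low-probability events in which $\abs{\vM-\vm}$ is much larger than its mean, or the configuration model produces an atypical number of self-loops and multi-edges, could in principle contribute $\Omega(n)$ to the expectations. I would absorb these using the deterministic bound $\abs{\log Z_\beta(G)}=O(\beta\abs{E(G)}+n\log q)$ together with the exponential tail provided by \Prop~\ref{Lemma_Azuma}, ensuring that the tail contribution to $\Erw[\log Z_\beta]$ is $o(n)$ for both models.
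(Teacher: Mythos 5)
Your overall strategy --- reduce to a coupling of $\GG$ and $\G$ with small expected edge-edit distance, then apply the one-edge Lipschitz bound $|\log Z_\beta(G)-\log Z_\beta(G')|\le\beta\,\Delta(G,G')$ --- matches the paper's, and the binomial half is close to what the paper does (both route through an intermediate multigraph with $\vM$ i.i.d.\ edges and then discharge the simplicity conditioning). The step that does not hold up as written is your claim in the regular case that $\G^{\mathrm{cm}}(n,d)$ and $\GG(n,d)=\G^{\mathrm{cm}}(n,d)\mid\cS$ can be coupled to differ in $O(1)$ edges in expectation. Since $\Pr[\cS]$ is bounded away from $1$, the two laws are at total variation distance $\Theta(1)$, so under \emph{any} coupling the graphs differ with constant probability; you would then have to show that when they differ they do so in only $O(1)$ edges. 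That is a nontrivial switching-type statement and is not implied by the observation that the expected number of loops and multi-edges is $O(1)$. Your closing remark attributes the remaining difficulty to tail events in which the defect count is atypically large, but that is not where the real issue sits.

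The paper avoids constructing any such coupling. It applies the Azuma concentration from \Prop~\ref{Lemma_Azuma} directly to the expectation comparison: $\log Z_\beta(\G^{\mathrm{cm}})$ is within $\zeta n$ of its mean except with probability $\exp(-\Omega(n))$, and is deterministically $O(n)$, so conditioning on an $\Omega(1)$-probability event moves the expectation by at most $\zeta n+o(1)$. Concretely,
\[
\bigl|\Erw[\log Z_\beta(\G^{\mathrm{cm}})\mid\cS]-\Erw[\log Z_\beta(\G^{\mathrm{cm}})]\bigr|
\le \zeta n + \frac{O(n)\cdot\Pr\bigl[\abs{\log Z_\beta(\G^{\mathrm{cm}})-\Erw\log Z_\beta(\G^{\mathrm{cm}})}>\zeta n\bigr]}{\Pr[\cS]}
=\zeta n+o(1),
\]
and the same device handles the simplicity conditioning in the binomial case (the paper's \eqref{absolute-bound_1}--\eqref{absolute-bound_2}). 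So \Prop~\ref{Lemma_Azuma} is doing the \emph{main} comparison here, not just tail control. Since you already invoke \Prop~\ref{Lemma_Azuma}, the fix is to use it for the conditioning step itself and drop the unjustified $O(1)$-edge coupling; with that change your argument is essentially the paper's. A minor bookkeeping point in the binomial coupling: after simplifying to $\hat m$ distinct edges you should augment by $\vM-\hat m$ further uniform non-edges, not $\vM-\vm$, so the result has exactly $\vM$ edges and therefore the law of $\GG$.
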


\noindent
Finally, thanks to the following corollary it suffices to bound $\Erw[\log Z_\beta(\GG)]$ to show that  $\GG$ fails to be $q$-chromatic.

\begin{corollary}\label{Cor_q}
If there is $\beta>0$ such that $\limsup_{n\to\infty}\frac1n\Erw\brk{\log Z_\beta(\GG)}<0,$ then
$\chi(\GG)>q$ \whp
\end{corollary}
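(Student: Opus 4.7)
The plan is to deduce the corollary directly from the concentration estimate of \Prop~\ref{Lemma_Azuma} together with the elementary lower bound $Z_\beta(G)\ge\#\{\text{proper $q$-colourings of }G\}$ already flagged in \Sec~\ref{Sec_The_Potts_antiferromagnet}. The roadmap has only two steps: first I would upgrade the hypothesis on the mean to a high-probability bound on $\log Z_\beta(\GG)$ itself, and then I would convert $Z_\beta(\GG)<1$ into non-$q$-colourability.

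To execute the first step I pick $\delta>0$ with $\Erw\brk{\log Z_\beta(\GG)}\le-2\delta n$ for all sufficiently large $n$, which is possible by the assumption $\limsup_{n\to\infty}\frac1n\Erw\brk{\log Z_\beta(\GG)}<0$. Fixing an auxiliary $\eps>0$ (any positive value works, since the distribution of $\GG$ does not actually depend on $\eps$), I then invoke \Prop~\ref{Lemma_Azuma} with parameters $\eps,\delta,\beta$ to produce $\xi>0$ such that
\[
\pr\brk{\,\abs{\log Z_\beta(\GG)-\Erw\brk{\log Z_\beta(\GG)}}>\delta n\,}\le\exp(-\xi n).
\]
Off this exponentially small exceptional event one has $\log Z_\beta(\GG)\le-\delta n<0$ and therefore $Z_\beta(\GG)<1$, so $Z_\beta(\GG)<1$ \whp

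For the second step I observe that, on any multigraph $G$, every proper $q$-colouring $\sigma$ contributes weight $\prod_{vw\in E(G)}\bc{1-(1-\eul^{-\beta})\vecone\{\sigma(v)=\sigma(w)\}}=1$ to the sum~\eqref{eqPotts2}, while all other summands are strictly positive. Hence $Z_\beta(G)$ dominates the nonnegative integer $\#\{\text{proper $q$-colourings of }G\}$, so $Z_\beta(G)<1$ forces this count to vanish and therefore $\chi(G)>q$. Applying this deterministic implication on the high-probability event produced in the first step yields $\chi(\GG)>q$ \whp, as required.

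I do not expect any genuine obstacle in this argument: once \Prop~\ref{Lemma_Azuma} is in hand, the corollary is a short deterministic bookkeeping exercise. All of the analytic substance is offloaded to the separate and much harder task of exhibiting a $\beta>0$ for which $\limsup_{n\to\infty}\frac1n\Erw\brk{\log Z_\beta(\GG)}<0$ in the degree regimes of \Thm s~\ref{Thm_reg} and~\ref{Thm_ER}, which is precisely what the interpolation method is deployed for in the remainder of the paper.
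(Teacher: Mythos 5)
Your argument is correct and is essentially the paper's own: both use \Prop~\ref{Lemma_Azuma} to concentrate $\log Z_\beta(\GG)$ near its negative mean and then the observation that every proper $q$-colouring contributes weight $1$ to $Z_\beta$, so $Z_\beta(\GG)<1$ forces $\chi(\GG)>q$. The only cosmetic difference is that the paper phrases the deterministic step in contrapositive form ($\chi(G)\le q\Rightarrow Z_\beta(G)\ge1$), while you phrase it directly.
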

\begin{proof}
If $\chi(G)\leq q$ then $Z_\beta(G)\geq1$ for all $\beta>0$.
Hence, 
\begin{align}\label{eqCor_q1}
\limsup_{n\to\infty}\pr[\chi(\GG)\leq q]>0&\Rightarrow
		\forall\beta>0:\limsup_{n\to\infty}\pr\brk{\ln Z_\beta(\GG)\geq0}>0.
\end{align}
Now, assume that $\limsup_{n\to\infty}\frac1n\Erw[\log Z_\beta(\GG)]<-\delta<0$ for some $\beta>0$.
Then  \Prop~\ref{Lemma_Azuma} implies that 
$\ln Z_\beta(\GG)\leq-\delta n/2$ \whp, and thus
$\limsup_{n\to\infty}\pr\brk{\ln Z_\beta(\GG)\geq0}=0$.
Thus, \eqref{eqCor_q1} shows that  $\chi(\GG)>q$ \whp
\end{proof}

\noindent
The proofs of \Prop~\ref{Lemma_Azuma} and \Cor~\ref{Cor_Azuma} can be found in \Sec~\ref{Sec_conc}. \aco{
\csg{At the end} of \Sec~\ref{Sec_overview} we show} how these results are used to prove our main theorems.

\subsection{The interpolation scheme}
The study of the partition function $Z_\beta(\GG)$ is closely intertwined with the study of the probability distribution $\mu_{\GG,\beta}$ from~\eqref{eqPotts1}.
What turns the latter task into a challenge is the possible presence of extensive stochastic dependencies amongst the colours that $\SIGMA\in[q]^{V_n}$, drawn from $\mu_{\G,\beta}$, assigns to the different vertices.
While there are short range dependencies between the colour of a vertex $v$ and the colours of vertices in its proximity,
the expansion properties of $\GG$ are apt to cause long-range dependencies as well.

To cope with this issue, we are going to compare $\GG$ with another random graph model $\G_1$ in which the dependencies between the vertices are more manageable.
\aco{Specifically, we will upper-bound $\Erw[\log Z_\beta(\GG)]$ in terms of $\Erw[\log Z_\beta(\G_1)]$.}
To this end we will construct an interpolating family of random graphs $(\G_t)_{t\in[0,1]}$ such that $\G_0$ essentially coincides with the random graph $\G$ from \Sec~\ref{Sec_The_Potts_antiferromagnet}.
To compare $\G_0$ and $\G_1$ we will show that $\frac{\partial}{\partial t}\Erw[\log Z_\beta(\G_t)]$ is non-negative.
This general proof strategy is known as the interpolation method.
The specific interpolation scheme $(\G_t)_{t\in[0,1]}$ that we use is an adaptation of the construction that Panchenko and Talagrand~\cite{PanchenkoTalagrand} used to study binary problems on binomial random hypergraphs (e.g., random $k$-SAT formulas).
In the case of random regular graphs, the present construction can actually be viewed as a special case of the interpolation scheme from~\cite{COP}.
But since we need to perform the analysis for the binomial random graph anyway, a unified treatment of both models incurs little overhead.

The elements $\G_t$ of the interpolation scheme will not be plain random graphs but random factor graphs.
To construct the interpolating family, fix a probability measure $\fr\in\cP^3([q])$ as well as parameters $\eps,\beta>0$ and a probability distribution $\gamma$ on $\NN$.
\aco{Let $(\vr_i,\vr_{i,j},\vr_{i}',\vr_i'',\hat\vr)_{i,j\geq1}$ be mutually independent random variables with distribution $\fr$; thus, 
$\vr_i,\vr_{i,j},\vr_{i}',\vr_i'',\hat\vr\in \cP^2([q])$.
Next, given $(\vr_i,\vr_{i,j},\vr_{i}',\vr_i'',\hat\vr)_{i,j}$ let 
\begin{align}\label{eqAllMyRhos}
\{\RHO_{h,i},\, \RHO_{h,i,j},\, \RHO_{h,i}',\, \RHO_{h,i}'',\,\hat\RHO_h \, \mid \, i,j,h\geq 1\}
\end{align}
be a set of mutually independent random variables such that the $\RHO_{h,i}$ have distribution $\vr_i$, the $\RHO_{h,i,j}$ have distribution $\vr_{i,j}$, the $\RHO_{h,i}'$ have distribution $\vr_i'$, the $\RHO_{h,i}''$ have distribution $\vr_i''$ and the $\hat\RHO_h$ have distribution $\hat\vr$.
Thus, all random variables in
$\{\RHO_{h,i},\, \RHO_{h,i,j},\, \RHO_{h,i}',\, \RHO_{h,i}'',\,\hat\RHO_h \, \mid \, i,j,h\geq 1\}$ 
are mutually independent given $(\vr_i,\vr_{i,j},\vr_{i}',\vr_i'',\hat\vr)_{i,j\geq 1}$.}
Additionally, let
\begin{align}\label{eqPoissons}
\vM_t&\disteq\Po((1-\eps)(1-t)dn/2),&\vM_t'&\disteq\Po((1-\eps)tdn),&\vM_t''&\disteq\Po((1-\eps)(1-t)dn/2)
\end{align}
be mutually independent and independent of everything else.
Define the event
\begin{align}\label{eqM}
	\cM=\cbc{2\vM_t+\vM_t'\leq dn,\,\vM_t+\vM_t'+\vM_t''\leq dn}
\end{align}
and write $(\vm_t,\vm_t',\vm_t'')$ for $(\vM_t,\vM_t',\vM_t'')$ given $\cM$.

\begin{remark}\upshape
\aco{Although the above description of the random variables is complete and correct, now seems to be a propitious moment to dwell on the measure-theoretic basis of the construction. 
It can be implemented on a standard Borel space.
To this end we identify the space $\cP([q])$ with the standard simplex in $\RR^q$.
Thus, $\cP([q])$ inherits the Euclidean topology and the corresponding Borel algebra.
Let $$\fR:[0,1]^2\to\cP([q]),\qquad(x,s)\mapsto\fR_{x,s}$$
be a measurable function and let
$$(\vx_i,\vx_{i,j},\vx_i',\vx_i'',\hat\vx,\vy_{h,i},\vy_{h,i,j},\vy_{h,i}',\vy_{h,i}'',\hat\vy_h)_{h,i,j\geq1}$$
 be mutually independent random variables that are uniformly distributed on the unit interval $[0,1]$, all defined on a common standard Borel space.
Then $\fR$ induces a distribution $\fr\in\cP^3([q])$ as for a given $x\in[0,1]$ we naturally obtain a distribution $\fR_x\in \cP^2([q])$, namely the distribution of the $\cP([q])$-valued random variable $\fR_{x,\vy_{1,1}}$.
Consequently, the distribution $\fr$ of the $\cP^2([q])$-valued random variable $\fR_{\vx_1}$ belongs to the space $\cP^3([q])$.
Indeed, since $\cP([q])$ is a complete separable metric space, any distribution $\fr\in\cP^3([q])$ can be represented by a map $\fR$ in this manner.
Now, the above $\vr_i$ can be identified with the $\cP^2([q])$-valued random variables $\fR_{\vx_i}$, and similarly for $\vr_{i,j},\vr_i',\vr_i'',\hat\vr_i$.
Moreover, the $\RHO_{h,i},\RHO_{h,i,j},\RHO_{h,i}',\RHO_{h,i}'',\hat\RHO_h$ can be identified with $\fR_{\vx_i,\vy_{h,i}},\fR_{\vx_{i,j},\vy_{h,i,j}},\fR_{\vx_i',\vy_{h,i}'},\fR_{\vx_i'',\vy_{h,i}''},\fR_{\hat\vx,\hat\vy_{h}}.$
}\end{remark}

All the factor graphs $\G_t$ have variable nodes
$$s,v_1,\ldots,v_n,$$
with $s$ ranging over $\NN$ (that is, $\Omega_s = \NN$), and $v_1,\ldots,v_n$ ranging over $[q]$.
The constraint nodes are $$e_1,\ldots,e_{\vm_t},a_1,\ldots,a_{\vm_t'},b_1,\ldots,b_{\vm_t''},g.$$
How constraint and variable nodes are connected depends on whether $\GG$ is the binomial or the regular random  graph.

\begin{definition}[binomial case]\label{Def_bin}
The connections between the constraint and variable are as follows.
\begin{itemize}
\item Each $e_i$, $i\in[\vm_t]$, is adjacent to a random pair of two distinct variable nodes from $V_n$; these pairs are drawn uniformly and independently of everything else.
\item Each $a_i$, $i\in[\vm_t']$, is adjacent to $s$ and one random variable node from $V_n$ drawn uniformly and independently of everything else.
\item The constraint nodes $g,b_1,\ldots,b_{\vm_t''}$ are adjacent to the variable node $s$ only.
\end{itemize}
\end{definition}

\noindent
The construction in the random regular case resembles the `configuration model'.

\begin{definition}[regular case]\label{Def_reg}
Let $\vec\Gamma_t$ be a uniformly random maximal matching of the complete bipartite graph with vertex classes
$$
\bc{\bigcup_{i=1}^{\vm_t}\{e_i\}\times\{1,2\}}\cup\bigcup_{i=1}^{\vm'_t}\{a_i\}\qquad\mbox{and}\qquad\bigcup_{i=1}^n\{v_i\}\times[d];
$$
this matching covers the left vertex set completely because $2\vm_t+\vm_t'\leq dn$.
\begin{itemize}
\item Each constraint node $e_i$ is adjacent to the variable nodes $v,w$ for which $\vec\Gamma_t$ contains edges between $(e_i,1)$ and $\{v\}\times[d]$ and $(e_i,2)$ and $\{w\}\times[d]$.
\item Each $a_i$ is adjacent to $s$ and to the variable node $w$ for which $\vec\Gamma_t$ contains an edge between $a_i$ and $\cbc w\times[d]$.
\item The constraints $g,b_1,\ldots,b_{\vm_t''}$ are adjacent to $s$ only.
\end{itemize}
\end{definition}

\noindent
Finally, we need to define the weight functions of the constraint nodes: let
\begin{align*}
\psi_{g}(\sigma_s)&=\gamma(\sigma_s)	&&(\sigma_s\in\NN),\\
\psi_{e_i}(\sigma_v,\sigma_w) &= 1-(1-\eul^{-\beta})\vecone\{\sigma_v=\sigma_w\}
		&& (\partial e_i=\{v,w\},\,\sigma_v,\sigma_{w}\in[q]),\\
\psi_{a_i}(\sigma_s,\sigma_v)&=1-(1-\eul^{-\beta})\RHO_{\sigma_s,i}(\sigma_v)
		&&(\partial a_i=\{s,v\},\,\sigma_s\in\NN,\ \sigma_v\in[q]),\\
\psi_{b_i}(\sigma_s)&=1-(1-\eul^{-\beta})\sum_{\tau\in[q]}\RHO_{\sigma_s,i}'(\tau)\RHO_{\sigma_s,i}''(\tau)
	&&(\sigma_s\in\NN).
\end{align*}
Thus, $\psi_{g}$ simply weighs the value $s$ according to the given probability distribution $\gamma$.
Moreover, the constraint nodes $e_i$ simulate the effect of the edges of the original graph $\G$ as in the definition~\eqref{eqPotts1} of the Potts model.
Indeed, if the variable nodes adjacent to $e_i$ are coloured the same then the weight is $\exp(-\beta)$; otherwise it is one.
Moreover, $\psi_{a_i}$ weighs the colour $\sigma$ of the adjacent variable node from $V_n$ according to $\RHO_{\sigma_s,i}$.
Further, $\psi_{b_i}(\sigma_s)$ is determined by the probability that two colours chosen independently  from
$\RHO_{\sigma_s,i}',\RHO_{\sigma_s,i}''\in\cP([q])$ coincide.
The total weight $\psi_{\G_t}$, partition function $Z(\G_t)$ and the Boltzmann distribution $\mu_{\G_t}$ are defined by the general formulas \eqref{psiG-def}--\eqref{eqBoltzmannGeneral}.
In the physics literature the $a_i$ are called \emph{external fields}~\cite{MM}.
A similar construction involving an extra $\NN$-valued variable node $s$ was used in~\cite{SSZ}.

At `time' $t=1$ \eqref{eqPoissons} ensures that $\vm_t=\vm_t''=0$.
Thus, the only constraints present are the $a_i$.
Each of them is connected to the variable node $s$ and to one other variable node.
Hence, the factor graph is star-shaped with constraint node $s$ at the centre.
In effect, the variable nodes $v_1,\ldots,v_n$ are dependent only through $s$.

By contrast, at $t=0$ \eqref{eqPoissons} yields $\vm_t'=0$.
Thus, the factor graph contains only constraints of type $e_i$ and of type $b_i$.
In effect, $\G_0$ decomposes into two parts.
The connected component of $s$ contains all the constraint nodes $b_i$, none of which is connected with $v_1,\ldots,v_n$.
Thus, once more there is a star structure with $s$ at the centre, and it is not too difficult to write out the partition function of this component.
Furthermore, the factor graph induced on $v_1,\ldots,v_n$ and $e_1,\ldots,e_{\vm_1}$ is essentially identical to the original graph $\G$.
\aco{More specifically, the Boltzmann distribution \aco{$\mu_{\G_0}$} mimics that of the Potts antiferromagnet $\mu_{\G,\beta}$ from~\eqref{eqPotts1}. The only, for our purposes negligible, difference is that $\G_0$ typically has slightly fewer than $dn/2$ constraint nodes of the type $e_i$.}
Thus, \csg{we can 
relate} the partition functions $Z_\beta(\G)$ and $Z(\G_0)$; see Figure~\ref{Fig_01} for an illustration.

\begin{figure}[ht!]
\begin{center}
\begin{tikzpicture}
\draw [-] (1,0)--(1,2)--(2.5,0) -- (5,2)--(7,0)--(7,2) -- (4,0) -- (3,2) -- (1,0);
\draw [-] (1,2)--(5.5,0) -- (3,2);
\draw [fill=white] (0.75,-0.25) rectangle (1.25,0.25); \draw [fill=white] (2.25,-0.25) rectangle (2.75,0.25);
\draw [fill=white] (3.75,-0.25) rectangle (4.25,0.25); \draw [fill=white] (5.25,-0.25) rectangle (5.75,0.25);
\draw [fill=white] (6.75,-0.25) rectangle (7.25,0.25); \draw [fill=white] (1,2) circle (0.25);
\draw [fill=white] (3,2) circle (0.25); \draw [fill=white] (5,2) circle (0.25);
\draw [fill=white] (7,2) circle (0.25);
\node at (1,0) {$e_1$}; \node at (2.5,0) {$e_2$}; \node at (4,0) {$e_3$}; \node at (5.5,0) {$e_4$};
\node at (7,0) {$e_5$}; \node at (1,2) {$v_1$}; \node at (3,2) {$v_2$}; \node at (5,2) {$v_3$};
\node at (7,2) {$v_4$}; 
\draw [-] (2.5,-2) -- (4,-1) -- (4,-2)  (2.5,-1)--(4,-1) -- (5.5,-2);
\draw [fill=white] (4,-1) circle (0.25);
\draw [fill=white] (2.25,-2.25) rectangle (2.75,-1.75); \draw [fill=white] (3.75,-2.25) rectangle (4.25,-1.75);
\draw [fill=white] (5.25,-2.25) rectangle (5.75,-1.75); \draw [fill=white] (2.25,-1.25) rectangle (2.75,-0.75);
\node at (4,-1) {$s$}; \node at (2.5,-2) {$b_1$}; \node at (4,-2) {$b_2$}; \node at (5.5,-2) {$b_3$};
\node at (2.5,-1) {$g$}; 
\begin{scope}[shift={(8,0)}]
\draw [-] (1.25,0) -- (2,2) -- (2,0) (2,2)--(2.75,0) (3.5,0) -- (4.25,2) -- (4.25,0) (4.25,2) -- (5,0);
\draw [-] (5.75,0) -- (6.5,2) -- (6.5,0) (6.5,2) -- (7.25,0) (8.0,0) -- (8.375,2) -- (8.75,0); 
\draw [-] (1.25,-0.2) -- (5,-2) -- (2,-0.2) (2.75,-0.2) -- (5,-2)-- (3.5,-0.2) (4.25,-0.2) -- (5,-2) -- (5,-0.2);
\draw [-] (5.75,-0.2) -- (5,-2) -- (6.5,-0.2) (7.25,-0.2) -- (5,-2)-- (8,-0.2) (8.8,-0.2) -- (5,-2) -- (3,-2);
\draw [fill=white] (2,2) circle (0.25); \draw [fill=white] (4.25,2) circle (0.25);
\draw [fill=white] (6.5,2) circle (0.25); \draw [fill=white] (8.375,2) circle (0.25);
\draw [fill=white] (5,-2) circle (0.25);
\draw [fill=white] (1.00,-0.25) rectangle (1.5,0.25); \draw [fill=white] (1.75,-0.25) rectangle (2.25,0.25);
\draw [fill=white] (2.50,-0.25) rectangle (3,0.25); \draw [fill=white] (3.25,-0.25) rectangle (3.75,0.25);
\draw [fill=white] (4.00,-0.25) rectangle (4.5,0.25); \draw [fill=white] (4.75,-0.25) rectangle (5.25,0.25);
\draw [fill=white] (5.50,-0.25) rectangle (6,0.25); \draw [fill=white] (6.25,-0.25) rectangle (6.75,0.25);
\draw [fill=white] (7.00,-0.25) rectangle (7.5,0.25); \draw [fill=white] (7.75,-0.25) rectangle (8.25,0.25);
\draw [fill=white] (8.50,-0.25) rectangle (9.00,0.25); \draw [fill=white] (2.75,-2.25) rectangle (3.25,-1.75); 
\node at (1.25,0) {$a_1$}; \node at (2,0) {$a_2$}; \node at (2.75,0) {$a_3$}; \node at (3.5,0) {$a_4$};
\node at (4.25,0) {$a_5$}; \node at (5,0) {$a_6$}; \node at (5.75,0) {$a_7$}; \node at (6.5,0) {$a_8$};
\node at (7.25,0) {$a_9$}; \node at (8,0) {$a_{10}$}; \node at (8.75,0) {$a_{11}$}; \node at (5,-2) {$s$};
\node at (2,2) {$v_1$}; \node at (4.25,2) {$v_2$}; \node at (6.5,2) {$v_3$}; \node at (8.375,2) {$v_4$};
\node at (3,-2) {$g$};
\end{scope}
\end{tikzpicture}
\end{center}
\caption{The factor graphs $\G_0$ (left) and $\G_1$ (right).}\label{Fig_01}
\end{figure}
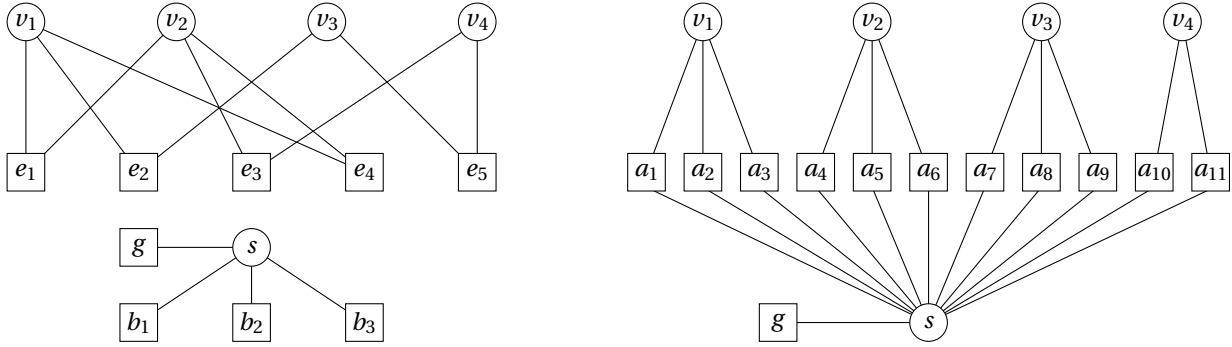

We observe that the distribution of the degrees of $v_1,\ldots,v_n$ remains essentially the same for $0\leq t\leq1$.
Specifically, in the regular case most variables have degree exactly $d$ throughout the interpolation, and in the binomial case the degrees are approximately  $\Po((1-\eps)d)$ distributed.
Additionally, the total number of constraints remains (essentially) constant throughout the interpolation as well.
Indeed, at $t=0$ there are about $(1-\eps)dn/2$ constraints of type $e_i$ and about the same number of constraints $b_i$, while
at $t=1$ we have about $(1-\eps)dn$ constraints of type $a_i$.

As mentioned above, the idea behind the construction is to compare $\Erw[\log Z_\beta(\GG)]$ with the partition function of a simpler model where correlations amongst $v_1,\ldots,v_n$ are amenable to a precise analysis.
The following two propositions spell out this relationship precisely.


\begin{proposition}\label{Prop_inter1}
Let
\begin{align*}
Y'&=\sum_{\sigma_s=1}^\infty \gamma(\sigma_s)\, \prod_{1\leq i\leq dn/2} 1-(1-\eul^{-\beta})\,
  \sum_{\tau=1}^q\,
    \RHO_{\sigma_s,i}'(\tau)\, \RHO_{\sigma_s,i}''(\tau).
\end{align*}
\aco{Then for any $\delta, \beta>0$ there exists $\eps_0(d,\delta,\beta)>0$ such that for all $0<\eps<\eps_0(d,\delta,\beta)$ and for all $n>1/\eps_0(d,\delta,\beta)$ we have $\Erw[\log Z(\G_0)]\geq {\Erw[\log Z_\beta(\GG)]}+\Erw[\log Y'] - \delta n$.}
\end{proposition}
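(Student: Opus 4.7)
The plan is to exploit the decomposition of $\G_0$ that arises at the boundary $t=0$. Since \eqref{eqPoissons} forces $\vm_0'=0$, no constraint of type $a_i$ is present in $\G_0$, so the factor graph falls apart into two disjoint components: the subgraph on $\{v_1,\ldots,v_n\}$ carrying only the pairwise Potts constraints $e_1,\ldots,e_{\vm_0}$, and a star centred at $s$ carrying $g,b_1,\ldots,b_{\vm_0''}$. Writing $\G^{(0)}$ for the multigraph on $V_n$ induced by the $e_i$'s, one reads off from the weight functions that
\begin{align*}
Z(\G_0)&=Z_\beta(\G^{(0)})\cdot Z_s^{(0)},\qquad Z_s^{(0)}=\sum_{\sigma_s=1}^\infty\gamma(\sigma_s)\prod_{i=1}^{\vm_0''}\bc{1-(1-\eul^{-\beta})\sum_{\tau=1}^q\RHO_{i,\sigma_s}'(\tau)\RHO_{i,\sigma_s}''(\tau)};
\end{align*}
that is, $Z_s^{(0)}$ is just $Y'$ but with $\vm_0''$ factors instead of $dn/2$. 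The proposition thus reduces to two independent inequalities $\Erw[\log Z_\beta(\G^{(0)})]\geq\Erw[\log Z_\beta(\GG)]-\delta n/2$ and $\Erw[\log Z_s^{(0)}]\geq\Erw[\log Y']-\delta n/2$.

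For the graph-side factor I would exploit the Lipschitz bound $|\log Z_\beta(G)-\log Z_\beta(G')|\leq\beta\,|E(G)\triangle E(G')|$, which is immediate from \eqref{eqPotts1} since every edge weight lies in $[\eul^{-\beta},1]$. The construction of $\G^{(0)}$ differs from the random multigraph $\G$ of \Sec~\ref{Sec_The_Potts_antiferromagnet} only in that its edge count is $\vm_0$ rather than $\vm$; both are Poisson with mean $(1-\eps)dn/2$ after a mild conditioning. In the binomial model I would couple the two edge sequences directly so that the symmetric difference has expected size $\Erw|\vm_0-\vm|=o(n)$ (by Poisson concentration and because $\pr[\cM]$ stays bounded away from zero). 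In the regular model the same estimate survives by adapting the coupling to the configuration-model matching. Combined with \Cor~\ref{Cor_Azuma} this yields the first inequality once $\eps$ is taken small enough relative to $\delta$ and $\beta$.

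For the $s$-side factor the key observation is that each product term lies in $[\eul^{-\beta},1]$ and that $Y'$ and $Z_s^{(0)}$ are built from \emph{exactly the same} random weights $\RHO_{i,\sigma_s}',\RHO_{i,\sigma_s}''$. On the event $\{\vm_0''\leq dn/2\}$, removing the factors of $Y'$ beyond index $\vm_0''$ can only inflate each summand, so pointwise $Z_s^{(0)}\geq Y'$; on the complementary event, reintroducing the missing factors costs at most a multiplicative $\eul^{-\beta(\vm_0''-dn/2)}$ per summand, so $\log Z_s^{(0)}\geq\log Y'-\beta\max(0,\vm_0''-dn/2)$. Since $\vm_0''\disteq\Po((1-\eps)dn/2)$ up to the conditioning on $\cM$, a standard Chernoff estimate makes $\Erw[\max(0,\vm_0''-dn/2)]$ exponentially small in $n$, delivering the second inequality with room to spare. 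I expect the principal nuisance to be the bookkeeping around the event $\cM$ and the concurrent control of $|\vm_0-\vm|$ and $\vm_0''-dn/2$, particularly in the regular case where the configuration-model matching correlates the $e_i$'s and the $b_i$'s; all these discrepancies are however of size $o(n)$ in expectation and can be absorbed into the error term by choosing $\eps$ sufficiently small.
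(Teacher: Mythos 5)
Your proposal is correct and follows essentially the same route as the paper's proof: factorize $Z(\G_0)$ into the $V_n$-component (the $e_i$-constraints) and the $s$-component (the $b_i$-constraints and $g$), identify the former with $Z_\beta(\G)$ up to a small error and pass to $\GG$ via \Cor~\ref{Cor_Azuma}, and couple the $s$-component to $Y'$ via the shared random variables $\RHO_{i,\sigma_s}',\RHO_{i,\sigma_s}''$ using a Lipschitz/Poisson-tail estimate. The only noteworthy deviation is cosmetic: you observe the \emph{pointwise} one-sided domination $Z_s^{(0)}\geq Y'$ on $\{\vm_0''\leq dn/2\}$ (since each $\psi_{b_i}\leq1$), which slightly streamlines the paper's two-sided estimate $|\Erw[\log Y']-\Erw[\log\cY]|\leq\eps\beta dn/2+o(n)$; and you are somewhat more explicit than the paper about the fact that the marginal of $\vm_0$ (conditioned on $\cM$) is not literally identical to $\vm$ but only exponentially close in total variation—which the paper elides by writing $\Erw[\log\cZ]=\Erw[\log Z_\beta(\G)]$ ``by construction.'' Neither difference affects the argument, since the Proposition carries a $\delta n$ slack and $\eps$ may be taken small.
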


\noindent
Furthermore, the following proposition shows that $Z(\G_1)$ dominates $Z(\G_0)$.
The proof is based on estimating the derivative $\frac{\partial}{\partial t}\Erw[\log Z(\G_t)]$.

\begin{proposition}\label{Prop_inter2}
We have\ $\Erw[\log Z(\G_0)]\leq\Erw[\log Z(\G_1)]+o(n).$
\end{proposition}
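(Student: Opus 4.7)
The plan is to apply the Guerra--Franz--Leone interpolation bound: write
\[
\Erw[\log Z(\G_1)] - \Erw[\log Z(\G_0)] = \int_0^1 \frac{\partial}{\partial t}\Erw[\log Z(\G_t)]\,\dd t
\]
and show that the integrand is non-negative up to a uniform $o(n)$ error on $[0,1]$. A preliminary step is to verify that replacing the conditioned variables $(\vm_t,\vm_t',\vm_t'')$ by the unconditioned Poissons $(\vM_t,\vM_t',\vM_t'')$ changes $\Erw[\log Z(\G_t)]$ by at most $o(n)$: the event $\cM$ has probability $1-o(1)$ by a Chernoff bound, and $\log Z(\G_t)$ is $\beta$-Lipschitz in the number of constraints, so the change is absorbed in the error.

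The next step is differentiation. The standard Poisson identity, that $N\sim\Po(\lambda(t))$ and i.i.d.\ positive factors $F_i$ with $Y_N=Y_0\prod_{i\leq N}F_i$ give $\frac{\dd}{\dd t}\Erw[\log Y_N]=\lambda'(t)\Erw[\log F_1]$, yields
\[
\frac{1}{n}\frac{\partial}{\partial t}\Erw[\log Z(\G_t)] = (1-\eps)d\Bigl(\Erw[\log F_a] - \tfrac12\Erw[\log F_e] - \tfrac12\Erw[\log F_b]\Bigr) + o(1),
\]
where $F_a,F_e,F_b$ are the $\mu_{\G_t}$-averages of the weights of a fresh constraint of the respective type, inserted with independent fresh data $v,w,\RHO,\RHO',\RHO''$. (In the regular case, the fresh half-edges must be plugged into the matching $\vec\Gamma_t$; a short-cycle/switching argument shows this is equivalent, up to $o(1)$, to inserting them at uniformly random vertices.) Since every weight lies in $[\eul^{-\beta},1]$, I then expand
\[
-\log F_\ast = \sum_{k\geq 1}\frac{(1-\eul^{-\beta})^k}{k}\,X_\ast^{(k)}\quad(\ast\in\{e,a,b\}),
\]
and by introducing $k$ i.i.d.\ replicas $\sigma^{(1)},\dots,\sigma^{(k)}$ drawn from $\mu_{\G_t}$, each $X_\ast^{(k)}$ becomes a single $k$-replicated Boltzmann expectation. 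The required inequality $\partial_t\Erw[\log Z(\G_t)]\geq -o(n)$ reduces to the termwise statement
\[
2A_k\leq E_k + B_k \qquad(k\geq 1),\qquad A_k:=\Erw[X_a^{(k)}],\ E_k:=\Erw[X_e^{(k)}],\ B_k:=\Erw[X_b^{(k)}].
\]

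Step~3 is the heart of the argument. After integrating out $\RHO,\RHO',\RHO''$ conditional on $(\vr_i)_{i\geq 1}$, each of $E_k,A_k,B_k$ becomes, partition-by-partition in the equalities among $\sigma^{(1)}_s,\dots,\sigma^{(k)}_s$, a bilinear expression: $E_k$ is an inner product between the $k$-replica marginals at two independent random vertex endpoints $v,w$; $B_k$ is an inner product between two independent copies ($\RHO',\RHO''$) of a $\vr_i$-field attached to $s$; and $A_k$ is a cross inner product between a vertex marginal and such a field. The weights $\psi_e,\psi_a,\psi_b$ are precisely engineered so that these three quantities are the three entries of a Gram matrix, and the termwise inequality $A_k^2\leq E_kB_k$ follows from Cauchy--Schwarz, whence $2A_k\leq E_k+B_k$ by AM--GM. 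Multiplying by $(1-\eul^{-\beta})^k/k$ and summing over $k$ re-assembles $\Erw[\log F_a]\geq\tfrac12(\Erw[\log F_e]+\Erw[\log F_b])$, and integration over $[0,1]$ closes the loop.

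The main obstacle is precisely making the Gram-matrix structure of Step~3 visible: one has to verify that the $\RHO$-averages, combined with the replica formalism, really do decompose $X_a^{(k)}, X_e^{(k)}, X_b^{(k)}$ into the desired $\langle f_v, g_s\rangle, \langle f_v, f_w\rangle, \langle g_s^{(1)}, g_s^{(2)}\rangle$ pattern for every partition of $[k]$ arising from the $s$-coordinate equality structure. A secondary difficulty is the combinatorics of the regular case: the matching $\vec\Gamma_t$ couples a newly inserted constraint to the rest of the factor graph in a weak but non-trivial way, and this coupling must be shown to be negligible in the $n\to\infty$ limit via standard cycle-counting arguments.
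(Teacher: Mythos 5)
Your proposal is correct and takes essentially the same route as the paper: differentiate in $t$ via the Poisson rate identity, expand the resulting log-ratios in powers of $1-\eul^{-\beta}$, and show that each termwise coefficient is non-negative by exposing a Gram-matrix structure. The only presentational differences are that the paper closes the positivity by completing the square directly, writing the $\ell$-th coefficient $\Delta_\ell$ as $\sum_{\tau\in[q]^\ell}\Erw\brk{\vecone\fC\cdot\scal{(\vX_\tau-\vY_\tau)^2}{\mu_{\G_t}}}\geq0$ (with $\vX_\tau$ the $\ell$-replicated cavity marginal and $\vY_\tau$ its $\vr$-field counterpart) rather than going through Cauchy--Schwarz and AM--GM, and that the regular case needs no switching or cycle-counting argument: it is enough that the endpoints attached by $\vec\Gamma_t$ to a fresh constraint lie within $o(1)$ total-variation distance of two independent $P_t$-draws, which is immediate from weighting $P_t$ by residual degree.
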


\noindent
Finally, we introduce a convenient proxy for the partition function of $\G_1$: let
\begin{align}\label{eqY}
Y&=\sum_{\sigma_s=1}^\infty \gamma(\sigma_s)\, \prod_{i=1}^n\, \sum_{\sigma_{v_i}=1}^q
  \, \prod_{j=1}^{\vD_i}\,
  \bc{1-(1-\eul^{-\beta})\RHO_{\sigma_s,i,j}(\sigma_{v_i})}.
\end{align}

\begin{corollary}\label{Prop_inter}
For any $\beta>0$ we have\, ${\Erw[\log Z_\beta(\GG)]}\leq \Erw[\log Y]-\Erw[\log Y'] + o(n)$.
\end{corollary}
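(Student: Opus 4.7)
My plan is to chain Propositions~\ref{Prop_inter1} and~\ref{Prop_inter2} and then evaluate $Z(\G_1)$ directly. Fix $\delta,\beta>0$ and choose $\eps>0$ small enough for Proposition~\ref{Prop_inter1} to apply. Stringing the two propositions together,
\begin{align*}
\Erw[\log Z_\beta(\GG)] + \Erw[\log Y'] - \delta n \ \leq\ \Erw[\log Z(\G_0)] \ \leq\ \Erw[\log Z(\G_1)] + o(n),
\end{align*}
so that $\Erw[\log Z_\beta(\GG)] \leq \Erw[\log Z(\G_1)] - \Erw[\log Y'] + \delta n + o(n)$. It therefore suffices to show $\Erw[\log Z(\G_1)] \leq \Erw[\log Y] + o(n)$ up to a correction that vanishes as $\eps\to 0$; sending $\delta,\eps\to 0$ will then finish the proof.

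To evaluate $\Erw[\log Z(\G_1)]$, I will use that at $t=1$ equation~\eqref{eqPoissons} forces $\vm_1=\vm_1''=0$, so the factor graph contains only the $\vm_1'$ constraints $a_i$ (each adjacent to $s$ and one variable from $V_n$) together with $g$. Thus $\G_1$ is a star centred at $s$, and once $\sigma_s$ is fixed the variables $\sigma_{v_1},\dots,\sigma_{v_n}$ decouple, yielding
\begin{align*}
Z(\G_1) = \sum_{\sigma_s\geq 1}\gamma(\sigma_s)\prod_{v\in V_n}\left[\sum_{\sigma_v\in[q]}\prod_{a_i\sim v}\bc{1-(1-\eul^{-\beta})\RHO_{i,\sigma_s}(\sigma_v)}\right].
\end{align*}
This has the same outer form as $Y$, with two discrepancies: (a) the degrees $D_v:=|\{i:a_i\sim v\}|$ follow $\Po((1-\eps)d)$ in the binomial case (by Poisson thinning) or lie in $\{0,\ldots,d\}$ with $\sum_v D_v=\vm_1'\approx(1-\eps)dn$ in the regular case, rather than the $\vD_i$ used in $Y$; and (b) distinct constraints $a_i$ at the same vertex of $\G_1$ carry independent $\vr_i$'s, whereas in $Y$ the $\vD_v$ factors at vertex $v$ share the single $\vr_v$.

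Handling (a) is routine: each factor lies in $[\eul^{-\beta},1]$, so adjusting the number of factors by $N$ changes $\log Z$ by at most $\beta N$, and since $\Erw\,|\vm_1'-\sum_i\vD_i|\leq\eps dn+o(n)$ the resulting expected correction is $O(\eps\beta dn)$, which vanishes with $\eps$. The main obstacle is (b). The marginal distribution of each $\RHO$ coincides in both models, and I expect that a Jensen-type convexity argument, applied after conditioning on the $\vr$'s and using that $x\mapsto(1-cx)^D$ is convex, shows that coalescing independent per-constraint $\vr$'s into a shared per-vertex $\vr$ can only \emph{increase} the expected value of the local partition $L_v(\sigma_s)=\sum_{\sigma}\prod_h(1-(1-\eul^{-\beta})\RHO_h(\sigma))$. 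The subtle part is lifting this mean-level inequality to $\Erw[\log]$. I expect to do this either by a concentration argument (an analogue of Proposition~\ref{Lemma_Azuma} for the model $\G_1$, replacing $\log L_v$ by $\log\Erw L_v$ up to $o(n)$) or by constructing an auxiliary factor graph $\G_1^{**}$ with per-vertex $\vr$'s, showing $\Erw[\log Z(\G_1)]\leq\Erw[\log Z(\G_1^{**})]$ via an additional interpolation along the $\vr$-structure, and then observing $\Erw[\log Z(\G_1^{**})]=\Erw[\log Y]$ after relabelling. Either route, combined with (a) and the chain above, yields the corollary after $\delta,\eps\to 0$.
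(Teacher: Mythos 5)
Your plan — chaining Propositions~\ref{Prop_inter1} and~\ref{Prop_inter2} and then comparing $\Erw[\log Z(\G_1)]$ with $\Erw[\log Y]$ — is exactly the paper's: the third step is precisely the paper's Lemma~\ref{Lemma_0}, and your handling of discrepancy (a) (the Lipschitz property $\psi_{a_i}\in[\eul^{-\beta},1]$ together with a degree coupling giving an $O(\eps\beta dn)$ correction) is the same argument the paper uses there.

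Discrepancy (b), however, is spurious, and this is where your proposal goes astray. In the definition~\eqref{eqY} the three-index symbol $\RHO_{i,\sigma_s,h}$ is, for distinct $h$, intended to be drawn from \emph{independent} fresh $\vr$-samples (one per half-edge $(i,h)$), not from a single shared per-vertex $\vr_i$. This is exactly what the relabelling in the proof of Lemma~\ref{Lemma_0} produces (it sets $\RHO_{i,\sigma_s,h}:=\RHO_{i_h,\sigma_s}$, and the distinct constraint indices $i_h$ carry mutually independent $\vr_{i_h}$), and it is forced by the downstream computations: Lemma~\ref{Lemma_zero1} yields $\prod_{h=1}^{\vD_1}\bigl(1-k(1-\ALPHA_h)/q\bigr)$ with per-$h$ independent $\ALPHA_h$, Lemma~\ref{Lemma_zero2} yields $(1-\ALPHA_1)(1-\ALPHA_2)/q$ with independent $\ALPHA_1,\ALPHA_2$, and the target~\eqref{eqERsigma0} has independent $(\ALPHA_h)$. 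A shared per-vertex $\vr_i$ would instead give $\bigl(1-k(1-\ALPHA_1)/q\bigr)^{\vD_1}$ and $(1-\ALPHA_1)^2/q$ — a different functional. Under the intended reading the laws of the local factors of $Z(\G_1)$ and $Y$ agree \emph{exactly} after relabelling; only the degree mismatch (a) remains, and your argument then closes. Consequently your auxiliary Jensen / second-interpolation machinery for (b) is unnecessary, and it would also be hard to make rigorous as written: the inequality $\Erw[L_v^{\mathrm{shared}}]\geq\Erw[L_v^{\mathrm{indep}}]$ that Jensen gives does not determine the sign of $\Erw[\log Z(\G_1)]-\Erw[\log Y]$, since the products over vertices and the sum over $\sigma_s$ sit \emph{inside} a concave $\log$, and a mean-level inequality on $L_v$ does not transfer.
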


\noindent
The proofs of \Prop s~\ref{Prop_inter1} and~\ref{Prop_inter2} and \Cor~\ref{Prop_inter} can be found in \Sec~\ref{Sec_Interpolation}.
We are thus left to study $Y,Y'$, which are approximations to the partition function of the factor graph $\G_1$
and the partition function of the $s$-component of $\G_0$, respectively.

\aco{Let us wrap up by dwelling on the intended combinatorial semantics of construction.
	The nodes $v_1,\ldots,v_n$ and $e_1,\ldots,e_{\vm_t}$ clearly mimic the orignial Potts antiferromagnet.
	But as we move along we replace more and more $e_i$ by external fields $a_j$.
	These are meant to capture the physics intuition as to the nature of the interactions between variable nodes in the random graph $\GG$; \Cor~\ref{Prop_inter} corroborates the physics picture to the extent that it yields an upper bound on the partition function.
	Specifically, the impact that an actual edge $e=vw$ of $\GG$ has on an incident vertex $v$ is thought to be governed by the local graph structure around the other vertex $w$ in the graph $\GG-e$ obtained by removing $e$~\cite{MM}.
	Since short cycles are scarce, the local graph structure will likely be a tree.
	Indeed,  it will just be a $(d-1)$-ary tree in the random regular graph, and a $\Po(d)$ Galton-Watson tree in the binomial case.
	In the binomial case, the specific tree structure is apt to impact the influence that $w$ exerts on $v$.
	For example, if the Galton-Watson tree dies out quickly, then it will be easy to colour the entire tree properly regardless \csg{of} the colour of $v$.
	Thus, the edge $e=vw$ will be of little consequence.
	By contrast, in the event of a relatively dense tree, choosing a specific colour for $v$ might have repercussions on a large number of other vertices.
	The random variables $\vr_i$ are meant to capture the randomness of the tree structure pending on vertex $w$.
	But for the sake of simplicity, we do not 
\csg{incorporate} an actual distribution on trees into our construction.
	Instead, we 
\csg{make do} with the distribution $\vr_i\in\cP^2([q])$ that is meant to just capture the ensuing impact that $w$ has on $v$.
}

\aco{
	Furthermore, the variable node $s$ is intended to represent the conjectured structure of the Boltzmann distribution $\mu_{\GG,\beta}$.
	To elaborate: according to physics intuition, the distribution $\mu_{\GG,\beta}$ partitions the phase space $[q]^{V_n}$ into an unbounded number of `clusters' $(S_i)_i$ for $d$ close to the $q$-colourability threshold and $\beta$ large~\cite{MP,ZK}.
	Inside a cluster, i.e., under the conditional distribution $\mu_{\GG,\beta}(\nix\mid S_i)$, most vertices $w$ are strongly polarised towards a particular colour.
	In other words, the conditional marginals $\mu_{\GG,\beta}(\{\SIGMA_w=c\nix\mid S_i)$ for $c\in[q]$ are typically either fairly close to zero or to one, while of course overall the marginal of the colour of each vertex is just uniform.
	The variable node $s$ is intended to represent the choice of the cluster $S_i$.
	Thus, the distribution $\vr_i\in\cP^2([q])$, which mimics the local graph structure, determines how the marginal of $w$ is distributed given a cluster index, and then the sample $\RHO_{\sigma_s,i}(c)$ represents the actual realisation of the distribution of the colour inside cluster number $\sigma_s$.
	Finally, $\gamma(s)$ models the distribution of the relative cluster volumes $\mu_{\GG,\beta}(S_i)$.
}

\subsection{Poisson-Dirichlet weights}\label{Sec_PD}
While the expression $\Erw[\log Y]-\Erw[\log Y']$ from \Cor~\ref{Prop_inter} already bears a certain resemblance to~\eqref{eqERsigma0}, an important difference remains.
Namely, the expressions $Y,Y'$ inside the logarithm still contain $n$, the number of vertices.
If the probability distribution $\gamma$ is an atom, that is, $\gamma(h)=1$ for some $h\in\NN$,
then we can produce the same joint distribution on $\sigma_{v_1},\ldots, \sigma_{v_n}$ by
deleting $s$ and $g$ from the factor graphs $G_0$ and $G_1$ and replacing $\sigma_s$ by $h$
in the expressions for $Y$ and $Y'$.  This causes $Y$ and $Y'$ to factorise:
\begin{align}\label{eqgammaatom}
\Erw[\log Y]&=\sum_{i=1}^n\Erw\brk{\log\sum_{\tau=1}^q\, \prod_{j=1}^{\vD_i}\, 1-(1-\eul^{-\beta})\RHO_{h,i,j}(\tau)},\ 
\Erw[\log Y']= \sum_{i\leq dn/2}\, \Erw\brk{\log \bc{1-(1-\eul^{-\beta})\sum_{\tau=1}^q \, \RHO_{h,i}'(\tau)\RHO_{h,i}''(\tau)}}.
\end{align}
In particular, long-range correlations are completely absent in the target $\G_1$ of the interpolation.
(The modified $\G_1$ with $s$ and $g$ deleted consists of $n$ connected components, each
containing exactly one $v_i$.) 
In physics jargon the bound on $\Erw[\log Z_\beta(\G)]$ that can be obtained from~\eqref{eqgammaatom}  is called the {\em replica symmetric bound}.
While the replica symmetric bound easily implies the first moment bound~\eqref{eqFirstMmt},
it does not appear sufficient to prove \Thm s~\ref{Thm_reg} and~\ref{Thm_ER} for any $q\geq3$.

Fortunately there is another choice of the distribution $\gamma$ that leads to a simple formula.
Recall that the {\em Poisson-Dirichlet distribution} with parameter $y>0$ is defined as follows.
Let $\vec P\subset(0,\infty)$ be the countable point set generated by a Poisson point process on $(0,\infty)$ with density $x^{-1-y}\dd x$, independent of all other sources of randomness that have been introduced thus far.
Further, let $(\vec p_h)_{h\geq1}$ be the sequence that comprises the points from $\vec P$ in decreasing order, i.e., $\vec p_h\geq\vec p_{h+1}$ for all $h$.
Since $y>0$, we have $\sum_{h=1}^{\infty}\vec p_h<\infty$ almost surely.
Therefore,
\begin{align}\label{eqGAMMA}
\GAMMA(h)=\vec p_h\Big/\sum_{\ell=1}^{\infty}\vec p_\ell
\end{align}
defines a probability measure on $\NN$, the Poisson-Dirichlet law.
To be precise, $\GAMMA$ is a random probability measure  which depends on $\vec P$. 
\csg{This distribution is used in the} following lemma, \csg{which} enables us to simplify $\Erw[\log Y],\ \Erw[\log Y']$.

\begin{lemma}[\aco{\cite[\Prop~1]{PanchenkoTalagrand} and~\cite[\Prop~6.5.15]{Talagrand}}]\label{Lemma_PD}
	\aco{Suppose that $0<y<1$ and} that $(X_h)_{h\geq1}$ are positive identically distributed random variables with bounded second moments, mutually independent and independent of\, $\GAMMA$.
Then
\begin{align*}
\Erw\brk{\log\sum_{h\geq1}\GAMMA(h)X_h}&=\frac1y\log\Erw[X_1^y].
\end{align*}
\end{lemma}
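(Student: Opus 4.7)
The plan is to exploit the distributional invariance of the underlying Poisson point process $\{\vec p_s\}_{s\geq 1}$ under multiplication by independent marks, which is the classical route to Poisson--Dirichlet identities. Writing
\[
\sum_{s\geq 1}\GAMMA(s)X_s \;=\; \frac{\sum_s \vec p_s X_s}{\sum_s \vec p_s},
\]
one sees that after taking logarithms the task reduces to relating the laws of the numerator and denominator; both sums converge almost surely since $0<y<1$. First I would apply the Poisson mapping theorem to the marked process $\{(\vec p_s, X_s)\}_{s\geq 1}$ under $(p,x)\mapsto px$. A one-line substitution $u=px$ inside the intensity integral reveals that $\{\vec p_s X_s\}_{s\geq 1}$ is again a Poisson point process on $(0,\infty)$, now with intensity $\Erw[X_1^y]\cdot x^{-1-y}\dd x$. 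Since a PPP with intensity $c\,x^{-1-y}\dd x$ is distributionally identical to the $c^{1/y}$-rescaling of a PPP with intensity $x^{-1-y}\dd x$, this produces the key identity
\[
\{\vec p_s X_s\}_{s\geq 1}\;\overset{d}{=}\;\Erw[X_1^y]^{1/y}\cdot \{\vec p_s'\}_{s\geq 1},
\]
where $\{\vec p_s'\}_{s\geq 1}$ is an independent copy of the original process.

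Summing over all atoms and taking expectations of logarithms then gives
\[
\Erw\brk{\log \sum_s \vec p_s X_s} \;=\; \frac{1}{y}\log \Erw[X_1^y] + \Erw\brk{\log \sum_s \vec p_s'}.
\]
Because $\sum_s\vec p_s'\overset{d}{=}\sum_s\vec p_s$, subtracting $\Erw\brk{\log\sum_s\vec p_s}$ from both sides cancels the remaining term and yields $\Erw\brk{\log \sum_s \GAMMA(s) X_s} = y^{-1}\log\Erw[X_1^y]$, as claimed.

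The main obstacle will be purely technical: verifying that $\log\sum_s\vec p_s$ and $\log\sum_s\vec p_s X_s$ are both integrable, so that the algebraic manipulation above is legitimate and the $\log\sum_s\vec p_s$-type terms actually cancel rather than producing an $\infty-\infty$. This will rest on standard Poisson--Dirichlet moment bounds for $\sum_s\vec p_s$ (which are finite precisely because $0<y<1$) together with the bounded-second-moment hypothesis on the $X_s$, the latter controlling both the upper and lower tails of the weighted sum via Cauchy--Schwarz.
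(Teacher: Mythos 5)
The paper does not prove Lemma~\ref{Lemma_PD}; it is cited from Panchenko and Talagrand without a proof, so there is nothing internal to compare against. Your argument is correct and is in fact the standard derivation of this identity: the marking/mapping theorem for Poisson processes shows that $\{\vec p_s X_s\}_{s\ge 1}$ is again a Poisson process on $(0,\infty)$ with intensity $\Erw[X_1^y]\,x^{-1-y}\,\dd x$, the scaling covariance of that intensity gives the distributional identity $\{\vec p_s X_s\}_{s\ge1}\stackrel{d}{=}\Erw[X_1^y]^{1/y}\{\vec p_s'\}_{s\ge1}$, and writing $\sum_s\GAMMA(s)X_s=\bigl(\sum_s\vec p_s X_s\bigr)/\bigl(\sum_s\vec p_s\bigr)$ lets the $\Erw[\log\sum_s\vec p_s]$ terms cancel once integrability is checked. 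Your remarks on the technical side are also right: one needs $0<y<1$ for $\sum_s\vec p_s<\infty$ a.s.\ (the paper's ``$y>0$'' is a slight misstatement of the Poisson--Dirichlet parameter range), the upper tail of $\sum_s\vec p_s$ has finite $a$-th moment for any $a<y$ so $\log^+$ is integrable, and the lower tail is controlled by $\Pr[\vec p_1<\eps]=\exp(-\eps^{-y}/y)$, which decays super-polynomially; the bounded second moment on $X_1$ is more than enough to make $\Erw[X_1^y]$ finite and to inherit the same integrability for $\log\sum_s\vec p_sX_s$ via the scaling identity.
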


\aco{In the physics literature, the Poisson-Dirichlet distribution has been postulated as the correct distribution of the relative cluster sizes~\cite{MM,MP}.}
Moreover, Panchenko and Talagrand~\cite{PanchenkoTalagrand} used \Lem~\ref{Lemma_PD} to bound the partition function of the random $k$-SAT model.
We apply \Lem~\ref{Lemma_PD} in a similar manner to upper bound $\Erw[\log Z_\beta(\GG)]$.
Specifically, let $\cR$ be the $\sigma$-algebra generated by \aco{$(\vr_i,\vr_{i,j},\vr_i',\vr_i'',\vD_i)_{\csg{i,j\geq1}}$}.
Thanks to \Lem~\ref{Lemma_PD}, \csg{we can simplify} the bound from~\Cor~\ref{Prop_inter}
as follows.

\begin{corollary}\label{Prop_PD}
For any $y,\beta>0$ and $\fr\in\cP^3([q])$ we have
\begin{align}\label{eqProp_PD}
\limsup_{n\to\infty}&{\frac 1n\Erw[\log Z_\beta(\GG)]}\leq\phi_{\beta,y}(\fr)/y,\qquad\mbox{where}\nonumber\\
\phi_{\beta,y}(\fr)&=
	\Erw\left[\,\, \log\, \Erw\brk{\bc{\sum_{\tau=1}^q\prod_{j=1}^{\vD_1}1-(1-\eul^{-\beta})\RHO_{1,1,j}(\tau)}^y\,\bigg|\,\cR}
	-\frac d{2}\log\, \Erw\brk{\bc{1-(1-\eul^{-\beta})\sum_{\tau=1}^q\RHO_{1,1}'(\tau)\RHO_{1,1}''(\tau)}^y\,\bigg|\,\cR}\,\, \right].
\end{align}
\end{corollary}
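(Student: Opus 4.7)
\medskip

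\noindent\textbf{Proof plan for Corollary~\ref{Prop_PD}.}
The plan is to start from \Cor~\ref{Prop_inter} and then specialise the free distribution $\gamma$ to the (random) Poisson--Dirichlet weights $\GAMMA$ of parameter $y$, after which \Lem~\ref{Lemma_PD} converts the logarithmic moments appearing in $\Erw[\log Y]$ and $\Erw[\log Y']$ into ordinary $y$-th moments that factorise over the vertices/constraints.
Concretely, the inequality ${\Erw[\log Z_\beta(\GG)]}\leq\Erw[\log Y]-\Erw[\log Y']+o(n)$ in \Cor~\ref{Prop_inter} holds for every deterministic probability distribution $\gamma\in\cP(\NN)$; choosing a realisation of $\GAMMA$ (which lies in $\cP(\NN)$ almost surely), applying the corollary conditionally on $\GAMMA$, and then taking expectation over $\GAMMA$ (which was introduced independently of $\GG$, the $\vr_i,\vD_i$ and the $\RHO$'s) preserves the inequality with the right-hand-side expectations now also averaging over $\GAMMA$.

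Next I would apply \Lem~\ref{Lemma_PD} \emph{conditionally} on the $\sigma$-algebra $\cR=\sigma((\vr_i,\vD_i)_{i\geq1})$. Write $Y=\sum_{s\geq 1}\GAMMA(s)X_s$ with
$X_s=\prod_{i=1}^n\sum_{\tau=1}^q\prod_{h=1}^{\vD_i}\bc{1-(1-\eul^{-\beta})\RHO_{i,s,h}(\tau)}$, and $Y'=\sum_{s\geq 1}\GAMMA(s)X'_s$ with $X'_s=\prod_{1\le i\le dn/2}\bc{1-(1-\eul^{-\beta})\sum_\tau\RHO'_{i,s}(\tau)\RHO''_{i,s}(\tau)}$.
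Given $\cR$, and for each fixed $i,h$, the samples $\RHO_{i,s,h}$ (respectively $\RHO'_{i,s},\RHO''_{i,s}$) are i.i.d.\ across $s\in\NN$, so $(X_s)_{s\geq1}$ and $(X'_s)_{s\geq1}$ are, conditionally on $\cR$, i.i.d.\ sequences bounded above by $q^n$ and hence of finite second moment; moreover they are independent of $\GAMMA$. \Lem~\ref{Lemma_PD} therefore gives
\begin{align*}
\Erw[\log Y\mid\cR]&=\frac1y\log\Erw[X_1^y\mid\cR],&
\Erw[\log Y'\mid\cR]&=\frac1y\log\Erw[(X'_1)^y\mid\cR].
\end{align*}

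The third step is to exploit the product structure of $X_1$ and $X'_1$. Conditional on $\cR$, the factors of $X_1$ indexed by different $i$ are independent (the variable $\RHO_{i,1,h}$ depends only on $\vr_i$ and on private uniforms), and likewise the factors of $X'_1$; thus $\Erw[X_1^y\mid\cR]$ and $\Erw[(X'_1)^y\mid\cR]$ factorise into $n$ (resp.\ $dn/2$) conditionally i.i.d.\ factors because the pairs $(\vr_i,\vD_i)$ are i.i.d.\ across $i$. Taking $\log$, summing, and taking the outer expectation over $\cR$ produces
\[
\tfrac1n\Erw[\log Y]=\tfrac1y\Erw\!\brk{\log\Erw\!\brk{\bc{\textstyle\sum_{\tau}\prod_{h=1}^{\vD_1}1-(1-\eul^{-\beta})\RHO_{1,1,h}(\tau)}^y\,\big|\,\cR}},
\]
and an analogous expression, with a prefactor $d/2$, for $\tfrac1n\Erw[\log Y']$. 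Subtracting and invoking \Cor~\ref{Prop_inter} yields precisely $\limsup_n n^{-1}\Erw[\log Z_\beta(\GG)]\leq\phi_{\beta,y}(\fr)/y$.

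The main technical point, and the place where care is required, is the \emph{conditional} application of \Lem~\ref{Lemma_PD}: one must verify that (i) $\GAMMA$ can indeed be treated as independent of the factor-graph randomness so that the inequality of \Cor~\ref{Prop_inter} survives the substitution $\gamma=\GAMMA$ (this needs the $o(n)$ term there to be uniform in $\gamma$, which one confirms by inspecting the proofs of \Prop s~\ref{Prop_inter1}--\ref{Prop_inter2}), and (ii) conditionally on $\cR$ the sequences $(X_s),(X'_s)$ really are i.i.d.\ with finite second moments and independent of $\GAMMA$—the crux being the conditional independence of the $\RHO_{i,s,h},\RHO'_{i,s},\RHO''_{i,s}$ across the index $s$ given $\cR$. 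Once these two points are established, the remaining manipulations are purely algebraic.
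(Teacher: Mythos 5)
Your proposal is correct and follows essentially the same route as the paper: substitute the Poisson--Dirichlet weights $\GAMMA$ for $\gamma$ in \Cor~\ref{Prop_inter}, apply \Lem~\ref{Lemma_PD} conditionally on $\cR$ to turn $\Erw[\log Y]$ and $\Erw[\log Y']$ into logarithms of conditional $y$-th moments, and then exploit the conditional independence across $i$ to factorise those moments into $n$ (resp.\ $dn/2$) identically distributed pieces. The only point worth adding is that you are a bit more scrupulous than the paper about the two technical preconditions (uniformity of the $o(n)$ term in $\gamma$, and the conditional i.i.d.\ structure of $(X_s)_s$ given $\cR$); both are indeed needed and both do hold by the bounds $\exp(-\beta)\leq\psi_{b_i},\psi_{a_i}\leq1$ and the mutual conditional independence of the $\RHO_{i,\cdot,\cdot}$ built into the construction, so this is a welcome clarification rather than a deviation.
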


\begin{proof}
{Choose $\eps>0$ small enough}
and assume that $n$ is sufficiently large.
Moreover, for all $h\in\NN$ let
\begin{align*}
X_h&=\prod_{i=1}^n\sum_{\sigma_{v_i}=1}^q\prod_{j=1}^{\vD_i}\bc{1-(1-\eul^{-\beta})\RHO_{h,i,j}(\sigma_{v_i})},&
X_h'&= \prod_{1\leq i\leq dn/2}\, \bc{1-(1-\eul^{-\beta})\sum_{\tau=1}^q\RHO_{h,i}'(\tau)\RHO_{h,i}''(\tau)}.
\end{align*}
Applying \Cor~\ref{Prop_inter} to the random distribution $\GAMMA$, we obtain
\begin{align*}
\Erw[\log Z_\beta(\GG)]&\leq \Erw\brk{\log\sum_{h=1}^\infty\GAMMA(h) X_h}-\Erw\brk{\log\sum_{h=1}^\infty\GAMMA(h) X_h'} + o(n).
\end{align*}
Hence, \Lem~\ref{Lemma_PD} yields
\begin{align}\label{eqProp_PD_1}
y\, \Erw[\log Z_\beta(\GG)]&\leq
	\Erw\brk{\log\Erw\brk{X_1^y\mid\cR}-\log\Erw\brk{{X_1'}^y\mid\cR}} + o(yn);
\end{align}
clearly, since $X_1,X_1'$ do not depend on $\vec P$, the outer $\Erw\brk\nix$ in \eqref{eqProp_PD_1} is on $(\vr_i,\vr_{i,j},\vr_i',\vr_i'',\vD_i)_{i\geq1}$ only.
Further, because 
the $\RHO_{h,i,j},\RHO_{h,i}',\RHO_{h,i}''$ are mutually independent given $\cR$, we obtain
\begin{align}\label{eqProp_PD_2}
\Erw\brk{\log\, \Erw[X_1^y\mid\cR]}&= n\, \Erw\brk{\log\, \Erw\brk{\bc{\sum_{\tau=1}^q\prod_{j=1}^{\vD_1}1-(1-\exp(-\beta))\RHO_{1,1,j}(\tau)}^y\,\bigg|\,\cR}},\\
\Erw\brk{\log\, \Erw[{X_1'}^y\mid\cR]}&=\frac{dn}2\, \Erw\brk{\log\, \aco{\Erw\brk{\bc{1-(1-\exp(-\beta))\sum_{\tau=1}^q\RHO_{1,1}'(\tau)\RHO_{1,1}''(\tau)}^y\,\bigg|\,\cR}}}.\label{eqProp_PD_3}
\end{align}
The assertion follows from \eqref{eqProp_PD_1}--\eqref{eqProp_PD_3}. 
\end{proof}

\subsection{The zero temperature limit}
To actually deduce a bound on the chromatic number from \Prop~\ref{Prop_PD} we need to fix the three remaining parameters $\beta,y,\fr$.
Since the Potts model approaches the graph colouring problem in the limit of large $\beta$, it seems natural to take the limit $\beta\to\infty$. 
In physics jargon, we take the `temperature' $1/\beta$ to zero.
Moreover, physics intuition suggests sending the `Parisi parameter' $y$ to zero {as well}.
Ding et al.~\cite{DSS3} took similar limits 
to derive the upper bound on the $k$-SAT threshold
from the formula for the $k$-SAT partition function from~\cite{PanchenkoTalagrand}.

With respect to $\fr$, we make two different choices, depending on whether $\GG$ is  regular or binomial.
Let us begin with the regular case.
For $i\in[q]$, let $\eta_i\in\cP([q])$ be the atom on colour $i$.
Moreover, let $\eta_0=q^{-1}\vecone\in \cP([q])$ be the uniform distribution on the $q$ colours.
Then for a given $\alpha\in[0,1]$ we define
\begin{align}\label{eqralpha}
r_\alpha&=\alpha\, \delta_{\eta_0}+\frac{1-\alpha}{q}\sum_{i=1}^q\delta_{\eta_i}\in\cP^2([q]).
\end{align}
Geometrically, we can think of $r_\alpha$ as a discrete distribution on the standard simplex $\cP([q])\subset\RR^q$ that places mass $\alpha$ on the centre and distributes the remaining mass $1-\alpha$ equally amongst the $q$ vertices of the simplex.
Let
\begin{align}\label{eqrhoalpha}
\fr_\alpha&=\delta_{r_\alpha}\in\cP^3([q])
\end{align}
be the atom on $r_\alpha$.
Further, the expression~\eqref{eqERsigma} for the binomial random graph involves a probability distribution $\fp$ on $[0,1]$.
Given any $\fp\in \cP([0,1])$, we define 
\begin{align}\label{eqrhoALPHA}
\fr_{\fp}&=\int_0^1\fr_{\alpha}\, \dd\fp(\alpha). 
\end{align}
\aco{Observe that the integrand is the distribution $\fr_\alpha\in\cP^3([q])$ from \eqref{eqrhoalpha}, and thus $\fr_{\fp}\in \cP^3([q])$.} 
Plugging $\fr_\alpha$ or $\fr_{\fp}$ into \Prop~\ref{Prop_PD}, we finally obtain the expressions from~\eqref{eqmySigma} and~\eqref{eqERsigma}.

\begin{proposition}\label{Prop_zero}
If $\GG=\GG(n,d)$ is the random regular graph then
\begin{align*}
\lim_{y\to0}\lim_{\beta\to\infty}\phi_{\beta,y}(\fr_\alpha)&=\Sigma_{d,q}(\alpha)&\mbox{for all }\alpha\in[0,1].
\end{align*}
Moreover, if $\GG=\GG(n,d/n)$ is the binomial model then
\begin{align*}
\lim_{y\to0}\lim_{\beta\to\infty}\phi_{\beta,y}(\fr_{\fp})&=\Sigma_{d,q}^*(\fp)&\mbox{for all }\fp\in\cP([0,1]).
\end{align*}
\end{proposition}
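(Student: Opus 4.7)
The plan is to compute the iterated limit via two standard devices: first send $\beta\to\infty$ so that each soft factor $1-(1-\eul^{-\beta})z$ collapses to the hard indicator $1-z$, and then send $y\to 0$ so that $\Erw[X^y\mid\cR]$ degenerates to $\pr[X>0\mid\cR]$. Together these convert $\phi_{\beta,y}(\fr)$ into a pair of log-probabilities that inclusion--exclusion evaluates explicitly.

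More concretely, let $S_\beta=\sum_{\tau=1}^q\prod_{h=1}^{\vD_1}(1-(1-\eul^{-\beta})\RHO_{1,1,h}(\tau))$ and $T_\beta=1-(1-\eul^{-\beta})\sum_{\tau=1}^q\RHO'_{1,1}(\tau)\RHO''_{1,1}(\tau)$, and write $S_\infty,T_\infty$ for their $\beta=\infty$ counterparts. Because both quantities lie in $[0,q]$, dominated convergence yields $\Erw[S_\beta^y\mid\cR]\to\Erw[S_\infty^y\mid\cR]$ and likewise for $T_\beta$; a second application, using the pointwise limit $X^y\to\vecone\{X>0\}$ as $y\to 0$, then gives $\Erw[S_\infty^y\mid\cR]\to\pr[S_\infty>0\mid\cR]$ and $\Erw[T_\infty^y\mid\cR]\to\pr[T_\infty>0\mid\cR]$. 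These conditional probabilities are strictly positive for every realisation of $\cR$: for instance, the event that none of the $\RHO_{1,1,h}$ equals $\eta_q$ has conditional probability at least $((q-1)/q)^{\vD_1}$ and forces $S_\infty>0$, while $\pr[T_\infty>0\mid\cR]\geq(q-1)/q$. Hence the logs are integrable and the outer expectation passes to the limit, giving
\begin{align*}
\lim_{y\to 0}\lim_{\beta\to\infty}\phi_{\beta,y}(\fr)=\Erw[\log\pr[S_\infty>0\mid\cR]]-\tfrac{d}{2}\Erw[\log\pr[T_\infty>0\mid\cR]].
\end{align*}

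The remaining task is combinatorial. Each $\RHO$ is distributed as $r_{\alpha_h}$ for the appropriate parameter $\alpha_h$ of its underlying $\vr$, so $\RHO=\eta_0$ with probability $\alpha_h$ and $\RHO=\eta_j$ with probability $(1-\alpha_h)/q$ for each $j\in[q]$. Setting $B_\tau=\#\{h\leq\vD_1:\RHO_{1,1,h}=\eta_\tau\}$ and $A=\#\{h:\RHO_{1,1,h}=\eta_0\}$, one verifies $\prod_{h=1}^{\vD_1}(1-\RHO_{1,1,h}(\tau))=((q-1)/q)^A\vecone\{B_\tau=0\}$, so $\{S_\infty>0\}=\{\exists\tau:B_\tau=0\}$. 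Inclusion--exclusion on the colour set then yields
\begin{align*}
\pr[S_\infty>0\mid\cR]&=\sum_{i=0}^{q-1}(-1)^i\binom{q}{i+1}\prod_{h=1}^{\vD_1}\Bigl(1-(i+1)(1-\alpha_h)/q\Bigr).
\end{align*}
Likewise $T_\infty=0$ iff $\RHO'_{1,1}=\RHO''_{1,1}=\eta_j$ for some $j$, whence $\pr[T_\infty>0\mid\cR]=1-(1-\alpha'_1)(1-\alpha''_1)/q$. In the regular case $\vD_1=d$ is deterministic and the $\alpha_h,\alpha'_1,\alpha''_1$ all equal $\alpha$, so the formulas collapse to the two terms of $\Sigma_{d,q}(\alpha)$. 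In the binomial case $\vD_1\sim\Po(d)$ and the $\alpha_h,\alpha'_1,\alpha''_1$ are i.i.d.\ with law $\fp$, and the outer expectation reproduces $\Sigma^*_{d,q}(\fp)$ term by term.

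The principal obstacle is purely technical: one must justify the three interchanges of limits (inner expectation with $\beta\to\infty$, inner expectation with $y\to 0$, and outer expectation with both). The uniform upper bound $S_\beta,T_\beta\leq q$ handles the inner dominated convergence, while the integrable envelope $|\log\pr[S_\infty>0\mid\cR]|\leq\log(q/(q-1))\cdot\vD_1$ (and the uniform constant bound for $T_\infty$) takes care of the outer expectation. With these in place the argument is a routine chase of limits followed by the inclusion--exclusion computation above.
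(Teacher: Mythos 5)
Your proof matches the paper's proof of this proposition essentially step for step. The paper (via Lemmas \ref{Lemma_zero1} and \ref{Lemma_zero2}) identifies the same two events $U$ (respectively, the event that $T_\infty=0$), establishes the same pointwise convergence of $\Erw[S_\beta^y\mid\cR]$ and $\Erw[T_\beta^y\mid\cR]$ to the corresponding conditional probabilities, and evaluates those probabilities by exactly the same inclusion--exclusion computation. The only cosmetic difference is that you take the two inner limits via two sequential applications of dominated convergence ($\beta\to\infty$ collapses the soft factors, then $y\to 0$ gives $X^y\to\vecone\{X>0\}$) with an explicit integrable envelope $\vD_1\log(q/(q-1))$, whereas the paper sandwiches $S_\beta$ between explicit bounds on the events $U$ and $U^c$ and lets the sandwich converge; these are equivalent in effect.
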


\noindent
The proof of \Prop~\ref{Prop_zero} can be found in \Sec~\ref{Sec_Prop_zero}.
\bigskip

\noindent Now we have all the pieces in place to complete the proofs of the main theorems.

\begin{proof}[Proof of \Thm~\ref{Thm_reg}]
Fix $q\geq3$ and assume that $\Sigma_{d,q}<0$ for some $d\geq3$.
(This holds when $d=d_q$, for example.)
Then \Prop~\ref{Prop_zero} yields $y,\beta>0$ and $\alpha\in[0,1]$ such that $\phi_{\beta,y}(\fr_\alpha)<0$.
In particular we can take $\alpha$ to be the value which minimises $\Sigma_{d,q}(\cdot)$.
Consequently, \Cor~\ref{Prop_PD} implies that $\limsup_{n\to\infty}\frac1n\Erw[\log Z_\beta(\GG)]<0$.
Therefore, \Cor~\ref{Cor_q} implies that 
\begin{equation}\label{eqThm_reg}
\chi(\GG(n,d))>q\qquad\mbox{\whp}
\end{equation}

We are left to prove that $\GG(n,d')$ also fails to be $q$-chromatic \whp\ for all $d'>d$.
To see this, we observe that the property of being $q$-colourable is decreasing; that is, if a graph $G$ is $q$-colourable then so is every subgraph $G'$ of $G$.
Now, \cite[\Thm~9.36]{JLR} shows that if $d'>d$ and if a decreasing property $\cA$ is satisfied for $\GG(n,d')$ \aco{\whp\ then} $\GG(n,d)$ enjoys $\cA$ \whp, too.
Thus, \eqref{eqThm_reg} implies that $\chi(\GG(n,d'))>q$ for all $d'>d$.
\end{proof}

\begin{proof}[Proof of \Thm~\ref{Thm_ER}]
Once more we fix $q\geq3$  and suppose that $\Sigma^*_{d,q}<0$ for some $d>0$.
(This holds when $d=d_q^*$, for example.)
Then by \Prop~\ref{Prop_zero} there exist $y,\beta>0$ and $\fp\in\cP([0,1])$ 
such that $\phi_{\beta,y}(\fr_\fp)<0$ and thus
$\limsup_{n\to\infty}\frac1n\Erw[\log Z_\beta(\GG)]<0$ by \Cor~\ref{Prop_PD}.
Hence, \Cor~\ref{Cor_q} yields
\begin{equation}\label{eqThm_ER}
\chi(\GG(n,d/n))>q\qquad\mbox{\whp}
\end{equation}
Finally, due to monotonicity, \eqref{eqThm_ER} implies that $\chi(\GG(n,d'/n))>q$ \whp\ for all $d'>d$.
\end{proof} 

\noindent
Given \Thm s~\ref{Thm_reg} and~\ref{Thm_ER} the asymptotic formulas detailed in \Cor~\ref{Cor_reg} and \Cor~\ref{Cor_ER} follow from routine calculations, which we defer to \Sec~\ref{Sec_asymptotics}.

\section{Concentration}\label{Sec_conc}

\noindent
After proving \Prop~\ref{Lemma_Azuma} in \Sec~\ref{Sec_Lemma_Azuma}, we prove
\Cor~\ref{Cor_Azuma} in \Sec~\ref{Sec_Cor_Azuma}.

\subsection{Proof of \Prop~\ref{Lemma_Azuma}}\label{Sec_Lemma_Azuma}
The proof is based on Azuma's inequality and the Lipschitz property of the random variable $\log Z_\beta(\nix)$.
Indeed, \eqref{eqPotts2} shows that if a multigraph $G'$ is obtained from $G$ by adding one single edge then
$e^{-\beta}\leq Z_\beta(G')/Z_\beta(G) \leq 1$, and hence
\begin{align}\label{eqLip}
\abs{\log Z_\beta(G')-\log Z_\beta(G)}&\leq\beta.
\end{align}
We pick a small enough $\zeta=\zeta(\eps,\delta,\beta)>0$ and a smaller $\xi=\xi(\eps,\delta,\beta,\zeta)>0$.
We treat the binomial random graph and the random regular graph separately, 
tacitly assuming in either case that $n$ is sufficiently large.

\subsubsection{The binomial random graph}
Writing $\vM\disteq\Bin(\binom n2,d/n)$ for the number of edges of $\GG$ and invoking the Chernoff bound, we obtain
\begin{align}\label{eqERedges1}
\pr\brk{\abs{\vM-dn/2}<\zeta n}\geq1-\exp(-4\xi n).
\end{align}
Further, let $\G_{n,m}$ be the random multigraph on $n$ vertices comprising $m$ edges chosen uniformly and independently out of all $\bink n2$ possible edges.
Let $\cS$ be the event that $\G_{n,m}$ is simple.  It is well known that
\begin{align}\label{eqERedges2}
\pr\brk{\G_{n,m}\in\cS}&= \Omega(1)&\mbox{ uniformly for all }m\leq dn/2+\zeta n.
\end{align}
Moreover, providing $\xi$ is chosen small enough, Azuma's inequality and~\eqref{eqLip} imply that
\begin{align}\label{eqERedges3}
\pr\brk{\abs{\log Z_\beta(\G_{n,m})-\Erw\brk{\log Z_\beta(\G_{n,m})}}>\zeta n}&\leq\exp(-6\xi n)&\mbox{ for all }m\leq dn/2+\zeta n.
\end{align}
The estimates \eqref{eqERedges2}--\eqref{eqERedges3} imply that  for all $m\leq dn/2+\zeta n$,
\begin{align}\label{absolute-bound_1}
\pr\brk{\abs{\log Z_\beta(\G_{n,m})-\Erw\brk{\log Z_\beta(\G_{n,m})}}>\zeta n\mid\cS}
	&\leq\exp(-5\xi n).
\end{align}
Since
\begin{equation}
\label{absolute-bound}
|\log Z_\beta(\G_{n,m})|\leq n\log q +m\beta=O(n+m),
\end{equation}
the bound \eqref{absolute-bound_1} shows that for all $m\leq dn/2+\zeta n$,
\begin{align}\label{absolute-bound_2}
\abs{\Erw\brk{\log Z_\beta(\G_{n,m})\mid\cS}-\Erw\brk{\log Z_\beta(\G_{n,m})}}\leq2\zeta n.
\end{align}
Further, because $\GG\mid(\vM=m)$ and $\G_{n,m}\mid\cS$ are identically distributed, \eqref{absolute-bound_1} and \eqref{absolute-bound_2}
show that 
\begin{align}\nonumber
\pr&\bigg[\, \abs{\log Z_\beta(\GG)-\Erw\brk{\log Z_\beta(\GG)\mid\vM=m}}>3\zeta n\, \bigg|\, \vM=m\bigg]
=\pr\brk{\abs{\log Z_\beta(\G_{n,m})-\Erw\brk{\log Z_\beta(\G_{n,m})\mid\cS}}>3\zeta n\mid\cS}\\
&\leq\pr\brk{\abs{\log Z_\beta(\G_{n,m})-\Erw\brk{\log Z_\beta(\G_{n,m})}}>\zeta n\mid\cS}
\leq\exp(-5\xi n)
\qquad\qquad\mbox{for all $m\leq dn/2+\zeta n$}.\label{eqERedges4}
\end{align}
Moreover, combining \eqref{eqERedges1}, \eqref{absolute-bound} and \eqref{eqERedges4}, we obtain
\eqref{eqLemma_Azuma}. 

\medskip

To prove the second assertion, we recall that 
$\vm\disteq\Po_{\leq dn/2}((1-\eps)dn/2)$.
We thus obtain the tail bound
\begin{align}\label{eqERedges6}
\pr\brk{\abs{\vm-(1-\eps)dn/2}>\zeta n}&\leq\exp(-2\xi n)
\end{align}
for sufficiently small $\xi$.
Since $\G\mid (\vm=m)$ and $\G_{n,m}$ are identically distributed, \eqref{eqERedges3} yields
\begin{align}\label{eqERedges7}
\pr\bigg[\, \abs{\log Z_\beta(\G)-\Erw[\log Z_\beta(\G)\mid\vm=m]}>\zeta n \bigg|\, \vm=m\, \bigg] &\leq\exp(-3\xi n).
\end{align}
Finally, providing that $\zeta$ is chosen small enough, \eqref{eqLip} and \eqref{eqERedges6} imply that 
\begin{align*}
|\Erw[\log Z_\beta(\G)\mid\vm=m]-\Erw[\log Z_\beta(\G)]|&\leq \delta n/2&\mbox{ for all }(1-\eps)dn/2-\zeta n\leq m\leq (1-\eps)dn/2+\zeta n.
\end{align*}
Therefore, the second part of~\eqref{eqLemma_Azuma} follows from \eqref{eqERedges6} and \eqref{eqERedges7}.

\subsubsection{The random regular graph}\label{Sec_Lemma_Azuma_rr}
We recall that the random regular graph $\GG$ can be constructed via the configuration model by
drawing a perfect matching $\ve_1,\ldots,\ve_{dn/2}$ of the complete graph on the vertex set $V_n\times[d]$ uniformly at random.
To be precise, the sequence $\ve_1,\ldots,\ve_{dn/2}$ is constructed by successively drawing a uniformly random edge $\ve_{i+1}$ that connects two distinct vertices of the complete graph  on $V_n\times [d]$  that are not incident with $\ve_1,\ldots,\ve_i$.
Let $\G'$  be the random multigraph on $[n]$ obtained by inserting for each matching edge $\ve_i=\{(v,h),(w,j)\}$ an edge between $v$ and $w$ and let $\cS$ denote the event that $\G'$ is simple.
It is well known that
\begin{align}\label{eqregedges1}
\pr\brk{\cS}&=\Omega(1);
\end{align}
see, e.g., \cite[\Cor~9.7]{JLR}.
Moreover, $\GG$ is distributed as $\G'$ given $\cS$.

To prove the first inequality we consider the filtration $(\cE_t)_{t\in[dn/2]}$ with $\cE_t$ generated by $\ve_1,\ldots,\ve_t$.
Then the sequence $(\Erw\brk{\log Z_\beta(\G')\mid\cE_t})_{t\in[dn/2]}$ is a Doob martingale.
Moreover, \eqref{eqLip} implies that
\begin{align}\label{eqregedges2}
\abs{\Erw\brk{\log Z_\beta(\G')\mid\cE_t}-\Erw\brk{\log Z_\beta(\G')\mid\cE_{t+1}}}&\leq \beta.
\end{align}
Therefore, Azuma's inequality yields
\begin{align}\label{eqregedges3}
\pr\brk{\abs{\log Z_\beta(\G')-\Erw[\log Z_\beta(\G')]}>\delta n/8}&\leq\exp(-2\xi n).
\end{align}
The first assertion thus follows from \eqref{eqregedges1} and \eqref{eqregedges3}.
Further, we can think of $\G$ as the multigraph obtained by inserting the edges induced by $\ve_1,\ldots,\ve_{\vm}$ only.
Hence, arguing as for (\ref{eqregedges3}) but stopping after $\vm$ steps gives
\begin{align}\label{eqregedges4}
\pr\bigg[\, \abs{\log Z_\beta(\G)-\Erw[\log Z_\beta(\G)\mid\vm]}>\delta n/8\bigg|\, \vm\, \bigg] &\leq\exp(-2\xi n).
\end{align}
Finally, the second assertion follows from \eqref{eqLip}, \eqref{eqERedges6} and \eqref{eqregedges4}.

\subsection{Proof of \Cor~\ref{Cor_Azuma}}\label{Sec_Cor_Azuma}
Given $\delta,\beta>0$ we choose small enough 
$\eps=\eps(\delta,\beta)>0$, $\zeta=\zeta(\delta,\beta,\eps)$, $\xi=\xi(\delta,\beta,\eps,\zeta)$
and assume that $n$ is sufficiently large.
Once more we treat the binomial and the regular models separately.

\subsubsection{The binomial random graph}
We continue to denote the total number of edges of the binomial graph $\GG=\GG(n,d/n)$ by $\vM$ and by
 $\G_{n,\vM}$ the random multigraph obtained by including $\vM$ uniformly and independently chosen edges.
Due to \eqref{eqERedges1} and~\eqref{eqERedges6}, with probability $1-\exp(-\Omega(n))$,
we can obtain $\G_{n,\vM}$ from $\G$ by adding or removing no more than $2\eps dn$ edges.
Hence, provided $\eps$ is small enough, \eqref{eqLip} ensures that
\begin{align}\label{eqCor_Azuma_1}
\abs{\Erw[\log Z_\beta(\G)]-\Erw[\log Z_\beta(\G_{n,\vM})]}\leq 
  2\eps \beta\, dn\, (1 - e^{-\Omega(n)}) + O(n^2)\, e^{-\Omega(n)} \leq \delta n/3.
\end{align}
Furthermore, with $\cS$ the event that $\G_{n,\vM}$ is simple,  $\GG$ is distributed as $\G_{n,\vM}$ given $\cS$.
Therefore, \eqref{eqERedges2} and \eqref{eqERedges3} imply that
\begin{align}\label{eqCor_Azuma_2}
\abs{\Erw[\log Z_\beta(\GG)]-\Erw[\log Z_\beta(\G_{n,\vM})]}&=
\abs{\Erw[\log Z_\beta(\G_{n,\vM})\mid\cS]-\Erw[\log Z_\beta(\G_{n,\vM})]}\leq \delta n/3.
\end{align}
Finally, the assertion follows from \eqref{eqCor_Azuma_1} and \eqref{eqCor_Azuma_2}.

\subsubsection{The random regular graph}
As in \Sec~\ref{Sec_Lemma_Azuma_rr} we denote by $\G'$ the random multigraph with $dn/2$ edges drawn from the configuration model.
By the principle of deferred decisions we can think of $\G'$ as being obtained from $\G$ by adding the missing $dn/2-\vm$ edges.
Hence, provided that $\eps$ is sufficiently small, \eqref{eqLip} implies that
\begin{align}\label{eqCor_Azuma_11}
\abs{\Erw[\log Z_\beta(\G)]-\Erw[\log Z_\beta(\G')]}\leq \delta n/3.
\end{align}
Furthermore, as $\GG$ is distributed as $\G'$ given the event $\cS$, \eqref{eqregedges1} and \eqref{eqregedges3} yield
\begin{align}\label{eqCor_Azuma_12}
\abs{\Erw[\log Z_\beta(\GG)]-\Erw[\log Z_\beta(\G')]}\leq \delta n/3.
\end{align}
The assertion follows from \eqref{eqCor_Azuma_11} and \eqref{eqCor_Azuma_12}.

\section{Interpolation}\label{Sec_Interpolation}

\noindent
In this section we carry out the technical details of the interpolation argument.
\Sec~\ref{Sec_Prop_inter1} contains the proof of \Prop~\ref{Prop_inter1} while
\Sec~\ref{Sec_Prop_inter2} deals with the proof of \Prop~\ref{Prop_inter2}.
Subsequently, \Sec~\ref{Sec_Prop_inter} contains the proof of \Prop~\ref{Prop_inter} and
finally, in \Sec~\ref{Sec_Prop_zero} we prove \Prop~\ref{Prop_zero}.

\subsection{Proof of \Prop~\ref{Prop_inter1}}\label{Sec_Prop_inter1}
A glimpse at~\eqref{eqPoissons} reveals that the random factor graph $\G_0$ consists of constraint nodes $e_1,\ldots,e_{\vm_0}$ and $b_1,\ldots,b_{\vm_0''}$ only.
(See also the left side of Figure~\ref{Fig_01}.)
The constraints $e_1,\ldots,e_{\vm_0}$ are adjacent to the variables $V_n$ but not to $s$, while $b_1,\ldots,b_{\vm_0''}$ are adjacent to $s$ but not to $V_n$.
Consequently, the partition function factorises:
\begin{align*}
Z(\G_0)&=\cY\cdot\cZ,\qquad\mbox{where}\qquad&
	\cY&=\sum_{\sigma_s=1}^\infty\, \gamma(\sigma_s)\, \prod_{i=1}^{\vm_0''}\psi_{b_i}(\sigma_s),&
\cZ&=\sum_{\sigma\in[q]^{V_n}}\, \prod_{i=1}^{\vm_0}\, \psi_{e_i}(\sigma_{\partial e_i}).
\end{align*}
Hence
\begin{align}\label{eqProp_inter1_1}
\Erw\brk{\log Z(\G_0)}&=\Erw\brk{\log \cZ}+\Erw\brk{\log \cY}
\end{align}
and by construction we have
\begin{align}\label{eqProp_inter1_2}
	\Erw\brk{\log \cZ}&=\Erw\brk{\log Z_\beta(\G)}+\aco{O(\vm_0-dn/2)+o(n)}.
\end{align}
Additionally, $Y'$ is distributed as $\cY$ given $\vm_0''=\lfloor dn/2\rfloor$.
Hence, since $\Pr[\vm_0'' > dn/2] = e^{-\Omega(n)}$, we can couple $\cY$ and $Y'$ such that
\begin{align}\label{eqProp_inter1_2a}
\Erw[\log Y'-\log\cY]&=\Erw\brk{\vecone\{\vm_0''\leq dn/2\}\log\sum_{\sigma_s=1}^{\infty}\gamma(\sigma_s)\prod_{1\leq i\leq dn/2-\vm_0''}\psi_{b_i}(\sigma_s)} + e^{-\Omega(n)}.
\end{align}
Since for any $s\in\NN$ we have $\exp(-\beta)\leq\psi_{b_i}(s)\leq1$ for all $i$, 
by \eqref{eqPoissons} and~\eqref{eqProp_inter1_2a} and applying Poisson tail bounds gives
\begin{align}\label{eqProp_inter1_3}
\abs{\Erw[\log Y']-\Erw[\log\cY]}&\leq \eps\beta dn/2+o(n).
\end{align}
Combining \eqref{eqProp_inter1_1}, \eqref{eqProp_inter1_2} and~\eqref{eqProp_inter1_3}, we obtain
\begin{align}\label{eqProp_inter1_4}
\Erw\brk{\log Z(\G_0)}\geq \Erw\brk{\log Z_\beta(\G)}\,  +  \,\Erw[\log Y']-\eps\beta dn/2+o(n).
\end{align}
Finally, the assertion follows from \eqref{eqProp_inter1_4} and \Cor~\ref{Cor_Azuma}.

\subsection{Proof of \Prop~\ref{Prop_inter2}}\label{Sec_Prop_inter2} 
We begin by defining a set $\cC_t$ of variable nodes of $\G_t$, along with a probability distribution $P_t$ on $\cC_t$.
In the binomial case ($\GG$ is the binomial random graph) 
let $\cC_t=V_n$ and let $P_t$ be the uniform distribution on $\cC_t$.
In the regular case ($\GG$ is the random regular graph) let  $\cC_t$ be the set of 
all vertices $v\in V_n$ of degree $d_{\G_t}(v)$ strictly less than $d$ in $\G_t$, and providing that $\cC_t\neq\emptyset$ we define, for all $v\in\cC_t$,
$$P_t(v)=\frac{d-d_{\G_t}(v)}{\sum_{w\in\cC_t}(d-d_{\G_t}(w))}.$$
In both the binomial and the regular case we refer to the elements of $\cC_t$ as {\em cavities}.
Assuming that $\cC_t\neq\emptyset$, we  denote by $\vc_1,\vc_1',\vc_2,\vc_2',\ldots\in\cC_t$ cavities drawn independently from $P_t$.
Note that $\pr(\cC_t=\emptyset)=e^{-\Omega(n)}$.

The proof of \Prop~\ref{Prop_inter2} relies on coupling arguments.
Specifically, we will couple $\G_t$ with three random factor graphs obtained by adding one more constraint of each of the three types of constraints:
\begin{itemize}
\item assuming that $2\vm_t+\vm_t'\leq dn-2$, we obtain $\G_t'$ from 
$\G_t$ by adding one more constraint $e_{\vm_t+1}$ as per \Def~\ref{Def_bin} or~\ref{Def_reg}, respectively; if $2\vm_t+\vm_t'> dn-2$ then we let $\G_t'=\G_t$.
\item assuming that $2\vm_t+\vm_t'<dn$, we obtain $\G_t''$ from 
$\G_t$ by adding one more constraint $a_{\vm_t'+1}$ in accordance with \Def~\ref{Def_bin} or~\ref{Def_reg}, respectively; if $2\vm_t+\vm_t'= dn$ then we let $\G_t''=\G_t$.
\item finally, obtain $\G_t'''$ from $\G_t$ by adding one more constraint $b_{\vm_t''+1}$.
\end{itemize}
The following lemma expresses the derivative of $\Erw\brk{\log Z_\beta(\G_t)}$ in terms of these three enhanced factor graphs.
Let us observe for future reference that
\begin{align}			\label{eqLemma_inter2_1_4a}
\abs{\log Z_\beta(\G_t)}&\leq n\log q+\beta(\vm_t+\vm_t'+\vm_t'')=O(n),
\end{align}
which follows from the fact that $\G_t$ has at most $1+\vm_t+\vm_t'+\vm_t''\leq dn+1$ constraint nodes, and that the weight functions of the constraint nodes $e_i,a_i,b_i$ satisfy $\exp(-\beta)\leq\psi_{e_i},\psi_{a_i},\psi_{b_i}\leq1$.

\begin{lemma}\label{Lemma_inter2_1}
We have
\begin{align}\label{eqLemma_inter2_1}
\frac2{(1-\eps)dn}\frac{\partial}{\partial t}\Erw\brk{\log Z_\beta(\G_t)}=
	-\Erw\brk{\log Z_\beta(\G_t')}+2\, \Erw\brk{\log Z_\beta(\G_t'')}-\Erw\brk{\log Z_\beta(\G_t''')}+o(1).
\end{align}
\end{lemma}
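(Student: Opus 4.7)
The strategy is to view $\Erw[\log Z_\beta(\G_t)]$ as a function of the three Poisson rates $\lambda_e(t)=\lambda_b(t)=(1-\eps)(1-t)dn/2$ and $\lambda_a(t)=(1-\eps)tdn$ that govern the counts $\vm_t,\vm_t',\vm_t''$, and differentiate by chain rule together with the Poisson derivative identity
\[
\frac{\dd}{\dd\lambda}\Erw[f(\vM)]=\Erw[f(\vM+1)-f(\vM)]\qquad(\vM\sim\Po(\lambda)).
\]
Since the weight-distribution variables $\RHO_{i,h},\RHO_{i,h,j},\RHO_{i,h}',\RHO_{i,h}''$ and the underlying pairing/matching randomness are $t$-independent, one can realise the family $(\G_t)_{t\in[0,1]}$ as a deterministic function of $(\vM_t,\vM_t',\vM_t'')$ and of an infinite $t$-independent pool of auxiliary randomness, so that incrementing $\vM_t$ by one produces precisely $\G_t'$, and similarly for $\vM_t'$ and $\vM_t''$.

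First I would drop the conditioning on $\cM$: because $\pr[\bar\cM]=e^{-\Omega(n)}$ and $|\log Z_\beta(\G_t)|=O(n)$ by~\eqref{eqLemma_inter2_1_4a}, this costs $o(1)$ in the normalised derivative. Then applying the Poisson identity once to each rate and using $\dd\lambda_e/\dd t=\dd\lambda_b/\dd t=-(1-\eps)dn/2$ and $\dd\lambda_a/\dd t=(1-\eps)dn$ gives
\begin{align*}
\frac{\partial}{\partial t}\Erw[\log Z_\beta(\G_t)]
&=-\tfrac{(1-\eps)dn}{2}\,\Erw\bigl[\log Z_\beta(\G_t')-\log Z_\beta(\G_t)\bigr]\\
&\quad+(1-\eps)dn\,\Erw\bigl[\log Z_\beta(\G_t'')-\log Z_\beta(\G_t)\bigr]\\
&\quad-\tfrac{(1-\eps)dn}{2}\,\Erw\bigl[\log Z_\beta(\G_t''')-\log Z_\beta(\G_t)\bigr]+o(n).
\end{align*}
Dividing by $(1-\eps)dn/2$ collapses the coefficients on $\Erw[\log Z_\beta(\G_t)]$ to $-1+2-1=0$: the telescoping cancellation of these three terms is exactly what yields the clean formula~\eqref{eqLemma_inter2_1}.

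The main obstacle is the coupling step in the regular case. There $\vec\Gamma_t$ is a uniformly random maximal matching whose ground set itself grows with $\vm_t$, so incrementing $\vm_t$ does not literally add two edges to a fixed matching. I would handle this by a ``deferred decisions'' argument on uniform bipartite matchings: since the right-hand vertex class has size $dn$ while $2\vm_t+\vm_t'+2\le dn$ with probability $1-e^{-\Omega(n)}$, every $(2\vm_t+\vm_t')$-matching extends, and by exchangeability the conditional distribution of the two matching edges incident to $(e_{\vm_t+1},1),(e_{\vm_t+1},2)$ given the rest is uniform over unused right half-edges. The Radon--Nikodym derivative between this conditional law and the $\G_t'$ construction of \Def~\ref{Def_reg} is $1+O(1/n)$ uniformly, which, combined with the $O(n)$ bound on $\log Z_\beta$, contributes at most $o(n)$ to the unnormalised derivative and hence $o(1)$ after normalisation. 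The analogous arguments handle $\G_t''$ and $\G_t'''$ (the $a_i$ and $b_i$ increments), completing the proof.
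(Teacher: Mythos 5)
Your proposal is correct and takes essentially the same route as the paper: both differentiate via the Poisson identity $\frac{\dd}{\dd\lambda}\Erw[f(\vM)]=\Erw[f(\vM+1)-f(\vM)]$ (the paper derives it inline via the product rule on the Poisson pmf), handle the conditioning on $\cM$ using the $e^{-\Omega(n)}$ tail together with the $O(n)$ bound on $\log Z_\beta$, and invoke the principle of deferred decisions to identify the Poisson increment with passing from $\G_t$ to $\G_t',\G_t'',\G_t'''$. One small remark on your regular-case coupling: by exchangeability of uniform partial injections, the conditional law of the two new matching edges given the rest is \emph{exactly} uniform over unused right half-edges, so the Radon--Nikodym derivative you estimate as $1+O(1/n)$ is in fact exactly $1$, which is why the paper treats the coupling as exact without an error term.
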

\begin{proof}
Recalling that $\vm_t,\vm_t',\vm_t''$ are distributed as the independent Poisson variables 
$\vM_t,\vM_t',\vM_t''$ from~\eqref{eqPoissons} given the event \aco{$\cM=\{2\vM_t+\vM_t'\leq dn,\,\vM_t+\vM_t'+\vM_t''\leq dn\}$ from \eqref{eqM}}, we see that
\begin{align}\label{eqLemma_inter2_1_1}
\Erw\brk{\log Z_\beta(\G_t)}
	&=\sum_{(m,m',m'')\in\cM}\pr\brk{\vM_t=m,\,\vM_t'=m',\,\vM_t''=m''\mid\cM}
			\Erw\brk{\log Z_\beta(\G_t)\mid\vm_t=m,\,\vm_t'=m'\,\vm_t''=m''}.
\end{align}
The conditional expectation on the right hand side is independent of $t$.
But the means of $\vM_t,\vM_t',\vM_t''$ are governed by $t$.
Hence, we need to differentiate $\pr\brk{\vM_t=m,\,\vM_t'=m',\,\vM_t''=m''\mid\cM}$.
For $(m,m',m'')\in\cM$ we obtain
\begin{align}\nonumber
\frac\partial{\partial t}&\pr\brk{\vM_t=m,\,\vM_t'=m',\,\vM_t''=m''\mid\cM}
	=\frac\partial{\partial t}\frac{\pr\brk{\vM_t=m}\pr\brk{\vM_t'=m'}\pr\brk{\vM_t''=m''}}{\pr\brk\cM}\\
	&=\frac{\pr\brk\cM\frac\partial{\partial t} \left(\pr\brk{\vM_t=m}\pr\brk{\vM_t'=m'}\pr\brk{\vM_t''=m''}
   \right)
				-\pr\brk{\vM_t=m}\pr\brk{\vM_t'=m'}\pr\brk{\vM_t''=m''}\frac\partial{\partial t}\pr\brk\cM}
				{\pr\brk\cM^2}.\label{eqLemma_inter2_1_2}
\end{align}
The product rule yields
\begin{align}
\frac\partial{\partial t}&\pr\brk{\vM_t=m}\pr\brk{\vM_t'=m'}\pr\brk{\vM_t''=m''}
\nonumber\\
	&=\bc{\frac\partial{\partial t}\pr\brk{\vM_t=m}}\,
    \pr\brk{\vM_t'=m'}\pr\brk{\vM_t''=m''}+\pr\brk{\vM_t=m}\bc{\frac\partial{\partial t}\pr\brk{\vM_t'=m'}}\pr\brk{\vM_t''=m''}\nonumber\\
&\qquad {} +\pr\brk{\vM_t=m}\pr\brk{\vM_t'=m'}\bc{\frac\partial{\partial t}\pr\brk{\vM_t''=m''}}\nonumber\\
	&=-\frac{(1-\eps)dn}2\, \bc{\pr\brk{\vM_t=m-1}-\pr\brk{\vM_t=m}}\pr\brk{\vM_t'=m'}\pr\brk{\vM_t''=m''}\nonumber\\
	&\qquad {} +(1-\eps)dn\, \pr\brk{\vM_t=m}\bc{\pr\brk{\vM_t'=m'-1}-\pr\brk{\vM_t'=m'}}\pr\brk{\vM_t''=m''}\nonumber\\
	&\qquad-\frac{(1-\eps)dn}2\, \pr\brk{\vM_t=m}\pr\brk{\vM_t'=m'}\bc{\pr\brk{\vM_t''=m''-1}-\pr\brk{\vM_t''=m''}},
			\label{eqLemma_inter2_1_3a}
\end{align}
\aco{which \csg{simplifies} to
\begin{align}
	\frac\partial{\partial t}&\pr\brk{\vM_t=m}\pr\brk{\vM_t'=m'}\pr\brk{\vM_t''=m''}\nonumber\\
							 &=-\frac{(1-\eps)dn}2\bigg[\pr\brk{\vM_t=m-1}\pr\brk{\vM_t'=m'}\pr\brk{\vM_t''=m''}-2\pr\brk{\vM_t=m}\pr\brk{\vM_t'=m'-1}\pr\brk{\vM_t''=m''}\nonumber\\
	&\qquad\qquad\qquad\qquad {} +\pr\brk{\vM_t=m}\pr\brk{\vM_t'=m'}\pr\brk{\vM_t''=m''-1}\bigg].
			\label{eqLemma_inter2_1_3}
\end{align}}
Moreover, differentiating $-\pr[\cM] = \pr[\cM^c] - 1$ gives
\begin{align}			\label{eqLemma_inter2_1_4}
-\frac\partial{\partial t}\pr\brk\cM&=\sum_{(m,m',m'')\not\in\cM}
			\frac\partial{\partial t}\pr\brk{\vM_t=m}\pr\brk{\vM_t=m'}\pr\brk{\vM_t''=m''}
		=\exp(-\Omega(n)).
\end{align}
\aco{Combining \eqref{eqLemma_inter2_1_1}--\eqref{eqLemma_inter2_1_4} and using $\pr\brk{\cM}=1-\exp(-\Omega(n))$, we obtain
\begin{align}			\nonumber
&-\frac2{(1-\eps)dn}\frac{\partial}{\partial t}\Erw\brk{\log Z_\beta(\G_t)}=o(1)+\\
		&\sum_{\substack{(m,m',m'')\in\cM}}\Erw\brk{\log Z_\beta(\G_t)\mid\vm_t=m,\vm_t'=m',\vm_t''=m''}\bigg(
				\pr\brk{\vM_t=m-1}\pr\brk{\vM_t'=m'}\pr\brk{\vM_t''=m''}\nonumber\\
		&\qquad\qquad\qquad\qquad-2\,
				\pr\brk{\vM_t=m}\pr\brk{\vM_t'=m'-1}\pr\brk{\vM_t''=m''}
		+
				\pr\brk{\vM_t=m}\pr\brk{\vM_t'=m'}\pr\brk{\vM_t''=m''-1}\bigg). \label{eqLemma_inter2_1_5}
\end{align}
By the principle of deferred decisions, if $(m,m',m'')\in\cM$ then we can think of $\G_t$ given $\vm_t=m,\vm_t'=m',\vm_t''=m''$ as resulting from $\G_t$ given $\vm_t=m-1,\vm_t'=m',\vm_t''=m''$ via the insertion of one more constraint $e_{\vm_t}$.
Therefore, 
\begin{align}\nonumber
\sum_{(m,m',m'')\in\cM}&\Erw\brk{\log Z_\beta(\G_t)\mid\vm_t=m,\vm_t'=m',\vm_t''=m''}
		\pr\brk{\vM_t=m-1}\pr\brk{\vM_t'=m'}\pr\brk{\vM_t''=m''}\\
		&=\Erw\brk{\vecone\{(\vm_t+1,\vm_t',\vm_t'')\in\cM\}\log Z_\beta(\G_t')}.
		\label{eqLemma_inter2_1_6}
\end{align}
The definition~\eqref{eqPoissons} of the Poisson variables ensures that $\pr[(\vm_t+1,\vm_t',\vm_t'')\in\cM]=1-\exp(-\Omega(n))$.
Hence, \eqref{eqLemma_inter2_1_4a} and \eqref{eqLemma_inter2_1_6} yield
\begin{align}\nonumber
\sum_{(m,m',m'')\in\cM}&\Erw\brk{\log Z_\beta(\G_t)\mid\vm_t=m,\vm_t'=m',\vm_t''=m''}
		\pr\brk{\vM_t=m-1}\pr\brk{\vM_t'=m'}\pr\brk{\vM_t''=m''}\\
		&=\Erw\brk{\log Z_\beta(\G_t')}+o(1).
		\label{eqLemma_inter2_1_7}
\end{align}
Similarly,
\begin{align}
\sum_{(m,m',m'')\in\cM}&\Erw\brk{\log Z_\beta(\G_t)\mid\vm_t=m,\vm_t'=m',\vm_t''=m''}
				\pr\brk{\vM_t=m}\pr\brk{\vm_t'=m'-1}\pr\brk{\vM_t''=m''}
		\nonumber\\
		&=\Erw\brk{\log Z_\beta(\G_t'')}+o(1),\label{eqLemma_inter2_1_8}\\
\sum_{(m,m',m'')\in\cM}&\Erw\brk{\log Z_\beta(\G_t)\mid\vm_t=m,\vm_t'=m',\vm_t''=m''}
				\pr\brk{\vM_t=m}\pr\brk{\vM_t'=m'}\pr\brk{\vM_t''=m''-1}\nonumber\\
		&=\Erw\brk{\log Z_\beta(\G_t''')}+o(1).
		\label{eqLemma_inter2_1_9}
\end{align}
Thus, the assertion follows from \eqref{eqLemma_inter2_1_5}, \eqref{eqLemma_inter2_1_7},
	\eqref{eqLemma_inter2_1_8} and~\eqref{eqLemma_inter2_1_9}.
}\end{proof}

\bigskip

\noindent
Let $\fC$ be the event that $|\cC_t|\geq n^{2/3}$.
The choice of the parameters~\eqref{eqPoissons} ensures that
\begin{align}\label{eqLemma_inter2_2_1}
\pr\brk{\fC}=1-\exp(-\Omega(n)).
\end{align}
We proceed to calculate the three expressions on the r.h.s.\ of~\eqref{eqLemma_inter2_1}.
Recall the function $\psi_{\G_t}$ and the Boltzmann distribution 
$\mu_{\G_t}$ which correspond to $\G_t$, 
defined as in (\ref{psiG-def}) and (\ref{eqBoltzmannGeneral}) \aco{with $\vec\gamma$ from \eqref{eqGAMMA}.}
\aco{Also recall the bracket notation from \eqref{eqbck}.}

\begin{lemma}\label{Lemma_inter2_2}
We have
\begin{align*}
	\Erw\brk{\log Z_\beta(\G_t')}-\Erw\brk{\log Z_\beta(\G_t)}
			&=o(1)-\sum_{\ell=1}^\infty\frac{(1-\eul^{-\beta})^\ell}{\ell}
  \, \Erw\brk{\vecone\fC\cdot\scal{\vecone\{\SIGMA_{\vc_1} =\SIGMA_{\vc_2}\}}{\mu_{\G_t}}^\ell}.
\end{align*}
\end{lemma}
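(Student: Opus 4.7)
The plan is to factor the ratio $Z_\beta(\G_t')/Z_\beta(\G_t)$ through the Boltzmann distribution $\mu_{\G_t}$, expand the logarithm as a Taylor series in the resulting Boltzmann expectation, and finally identify the conditional distribution of the two endpoints of the newly added edge constraint with two cavities $\vc_1,\vc_2$ drawn independently from $P_t$.

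First, let $v,w$ denote the two variable nodes adjacent to the new constraint $e_{\vm_t+1}$ of $\G_t'$ (this is well-defined on the event $\{2\vm_t+\vm_t'\leq dn-2\}$, whose complement has probability $\exp(-\Omega(n))$ and contributes $o(1)$ via~\eqref{eqLemma_inter2_1_4a}). From the factorisation~\eqref{psiG-def} and the explicit weight function $\psi_{e_{\vm_t+1}}(\sigma_v,\sigma_w) = 1-(1-\eul^{-\beta})\vecone\{\sigma_v=\sigma_w\}$, we obtain
\begin{align*}
\frac{Z_\beta(\G_t')}{Z_\beta(\G_t)} = 1-(1-\eul^{-\beta})\,\scal{\vecone\{\SIGMA_v=\SIGMA_w\}}{\mu_{\G_t}}.
\end{align*}
Because the right-hand side lies in $[\eul^{-\beta},1]\subset(0,1]$, the series $\log(1-x)=-\sum_{\ell\geq1}x^\ell/\ell$ converges and yields
\begin{align*}
\log Z_\beta(\G_t')-\log Z_\beta(\G_t) = -\sum_{\ell=1}^\infty \frac{(1-\eul^{-\beta})^\ell}{\ell}\, \scal{\vecone\{\SIGMA_v=\SIGMA_w\}}{\mu_{\G_t}}^\ell.
\end{align*}

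Next I would compare the conditional distribution of $(v,w)$ given $\G_t$ with two independent samples $(\vc_1,\vc_2)$ from $P_t$ on the event $\fC$. In the binomial case (Definition~\ref{Def_bin}), $(v,w)$ is a uniformly random ordered pair of distinct elements of $V_n=\cC_t$, whereas $(\vc_1,\vc_2)$ is uniform without the distinctness constraint; the two laws can be coupled to agree with probability $1-O(1/n)$. In the regular case (Definition~\ref{Def_reg}), the new edge is attached to two uniformly chosen uncovered half-edges, so $v$ has distribution exactly $P_t$, while $w$ has a distribution differing from $P_t$ only by an $O(1/|\cC_t|)$ perturbation coming from the removal of the half-edge already used by $v$. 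On $\fC$ both couplings succeed with probability $1-O(n^{-2/3})$, and since each summand $\scal{\vecone\{\SIGMA_v=\SIGMA_w\}}{\mu_{\G_t}}^\ell$ lies in $[0,1]$, the coupling error transfers directly into the $\ell$th term.

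To conclude, I would take expectations, swap sum and expectation (legitimate because $\sum_{\ell\geq1}(1-\eul^{-\beta})^\ell/\ell = \beta$ and each summand is uniformly bounded by $1$), then split according to $\fC$ versus $\fC^c$, absorbing the contribution from $\fC^c$ using $\pr[\fC^c]=\exp(-\Omega(n))$ from~\eqref{eqLemma_inter2_2_1} together with the crude bound~\eqref{eqLemma_inter2_1_4a}. The main obstacle is the regular-graph coupling: one must verify that, despite the dependence of $(v,w)$ on the full graph $\G_t$ through the remaining-degree sequence, the joint law of the endpoints is within $o(1)$ total variation of $P_t\otimes P_t$ on the high-probability event $\fC$, so that the Taylor coefficients really do match $\Erw[\vecone\fC\cdot\scal{\vecone\{\SIGMA_{\vc_1}=\SIGMA_{\vc_2}\}}{\mu_{\G_t}}^\ell]$ up to an $o(1)$ error summable over $\ell$.
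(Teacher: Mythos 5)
Your proposal is correct and follows essentially the same route as the paper: factor the ratio $Z_\beta(\G_t')/Z_\beta(\G_t)$ through the Boltzmann expectation $\scal{\psi_{e_{\vm_t+1}}}{\mu_{\G_t}}$, expand $\log(1-x)$ as a power series, identify the law of the new edge's endpoints with two independent draws from $P_t$ up to $o(1)$ total variation on the event $\fC$, and absorb $\fC^c$ and the swap of sum and expectation using the crude bound \eqref{eqLemma_inter2_1_4a} and $\pr[\fC^c]=\exp(-\Omega(n))$. The paper treats the regular-case coupling by simply asserting the $o(1)$ total-variation bound, whereas you spell out why it holds (first endpoint is exactly $P_t$-distributed, second differs by an $O(1/|\cC_t|)=O(n^{-2/3})$ perturbation on $\fC$), but the content is the same.
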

\begin{proof}
Since~\eqref{eqLemma_inter2_1_4a} shows that $\log Z_\beta(\G_t),\log Z_\beta(\G_t')=O(n)$, \eqref{eqLemma_inter2_2_1} implies that
\begin{align}\label{eqLemma_inter2_2_2}
\Erw\brk{\log Z_\beta(\G_t')}-\Erw\brk{\log Z_\beta(\G_t)}&=
	\Erw\brk{\vecone\fC\cdot\log Z_\beta(\G_t')}-\Erw\brk{\vecone\fC\cdot\log Z_\beta(\G_t)}+o(1).
\end{align}
Moreover, conditioned on the event $\fC$, the factor graph $\G_t'$ results from $\G_t$ via the addition of a single constraint $e_{\vm_t+1}$.
Denoting by $\vec u,\vec v$ the variable nodes that $e_{\vm_t+1}$ joins, we obtain
\begin{align}
\log Z_\beta(\G_t')-\log Z_\beta(\G_t) =\log\frac{Z_\beta(\G_t')}{Z_\beta(\G_t)}
		&=\log\,\, \sum_{\sigma \in \NN\times [q]^n}\,
\psi_{e_{\vm_t+1}}(\sigma_{\vu},\sigma_{\vv})			\,
\frac{\psi_{\G_t}(\sigma)}
				{Z_\beta(\G_t)} 
		=\log\scal{\psi_{e_{\vm_t+1}}}{\mu_{\G_t}}.
\label{eqLemma_inter2_2_3}
\end{align}
(Here the sum is over all $\sigma = (\sigma_s,\sigma_{v_1},\ldots, \sigma_{v_n})\in\NN\times [q]^n$,
recalling that $V_n = \{ v_1,\ldots, v_n\}$.)
In particular, \aco{since $\exp(-\beta)\leq\psi_{e_{\vm_t+1}}(\SIGMA)\leq1$,}
we have $-\beta\leq\log Z_\beta(\G_t')-\log Z_\beta(\G_t)\leq 0$.
Further, conditioned on $\fC$,
 the probability that two cavities $\vc_1,\vc_2$ chosen independently with distribution $P_t$ 
coincide is $o(1)$.
Hence, recalling the construction of the probability distribution $P_t$ on the set $\cC_t$ of cavities, we notice that the distribution of the pair $(\vu,\vv)$ and the distribution of the pair $(\vc_1,\vc_2)$ have total variation distance $o(1)$.
Consequently, \eqref{eqLemma_inter2_2_3} yields
\begin{align}\nonumber
\Erw\brk{\vecone\fC\cdot\log Z_\beta(\G_t')}-\Erw\brk{\vecone\fC\cdot\log Z_\beta(\G_t)}
			&=\Erw\brk{\vecone\fC\cdot \log\scal{\psi_{e_{\vm_t+1}}}{\mu_{\G_t}}}\\
&=o(1)+\Erw\brk{\vecone\fC\cdot\log\bc{1-(1-\eul^{-\beta})\scal{\vecone\{\SIGMA_{\vec c_1}=\SIGMA_{\vec c_2 }\}}{\mu_{\G_t}}}}\nonumber\\
&=o(1)-\sum_{\ell=1}^\infty\frac{(1-\eul^{-\beta})^\ell}{\ell}
		\Erw\brk{\vecone\fC\cdot\scal{\vecone\{\SIGMA_{\vec c_1}=\SIGMA_{\vec c_2}\}}{\mu_{\G_t}}^\ell}.
		\label{eqLemma_inter2_2_4}
\end{align}
The assertion follows from \eqref{eqLemma_inter2_2_2} and \eqref{eqLemma_inter2_2_4}.
\end{proof}

\begin{lemma}\label{Lemma_inter2_3}
We have
\begin{align*}
	\Erw\brk{\log Z_\beta(\G_t'')}-\Erw\brk{\log Z_\beta(\G_t)}
	&=o(1)-\sum_{\ell=1}^\infty\frac{(1-\eul^{-\beta})^\ell}{\ell}\, 
	\Erw\brk{\vecone\fC\cdot\scal{\aco{\RHO_{\SIGMA_s,\vm_t'+1}(\SIGMA_{\vc_1})}}{\mu_{\G_t}}^\ell}.
\end{align*}
\end{lemma}
\begin{proof}
Just as in the proof of \Lem~\ref{Lemma_inter2_2} we have
\begin{align}\label{eqLemma_inter2_3_1}
\Erw\brk{\log Z_\beta(\G_t'')}-\Erw\brk{\log Z_\beta(\G_t)}&=
	\Erw\brk{\vecone\fC\cdot\log Z_\beta(\G_t'')}-\Erw\brk{\vecone\fC\cdot\log Z_\beta(\G_t)}+o(1).
\end{align}
Denote by $\vv\in V_n$ the variable node adjacent to the new constraint $\va_{m_t'+1}$ of $\G_t''$.
Then conditioned on $\fC$ we have 
\begin{align*}
\log Z_\beta(\G_t'')-\log Z_\beta(\G_t)&
		=\log\,\, \sum_{\sigma\in\NN\times[q]^{n}}\, 
		\psi_{a_{\vm_t'+1}}(\sigma_{\vv}) \,
             \frac{\psi_{\G_t}(\sigma)}{Z_\beta(\G_t)}
		=\log\scal{\psi_{a_{\vm_t'+1}}}{\mu_{\G_t}}.
\end{align*}
By construction, the variable node $\vv$ is distributed according to $P_t$, the law of $\vc_1$.
Hence,
\begin{align}\nonumber
\Erw\brk{\vecone\fC\cdot\log Z_\beta(\G_t'')}-\Erw\brk{\vecone\fC\cdot\log Z_\beta(\G_t)}&=
\Erw\brk{\vecone\fC\cdot\log\bc{1-(1-\eul^{-\beta})}\aco{\scal{\RHO_{\SIGMA_s,\vm_t'+1}(\SIGMA_{\vec c_1})}{\mu_{\G_t}}}}\\
	&=-\sum_{\ell=1}^\infty\frac{(1-\eul^{-\beta})^\ell}{\ell}\Erw\brk{\vecone\fC\cdot
	\aco{\scal{\RHO_{\SIGMA_s,\vm_t'+1}(\SIGMA_{\vc_1})}{\mu_{\G_t}}^\ell}}.
		\label{eqLemma_inter2_3_3}
\end{align}
Combining \eqref{eqLemma_inter2_3_1} and \eqref{eqLemma_inter2_3_3} completes the proof.
\end{proof}

\begin{lemma}\label{Lemma_inter2_4}
We have
\begin{align*}
	\Erw\brk{\log Z_\beta(\G_t''')}-\Erw\brk{\log Z_\beta(\G_t)}
	&=o(1)-\sum_{\ell=1}^\infty\frac{(1-\eul^{-\beta})^\ell}{\ell}
	\aco{\Erw\brk{\vecone\fC\cdot\scal{\sum_{\tau=1}^q\RHO_{\SIGMA_s,\vm_t''+1}'(\tau)\, \RHO''_{\SIGMA_s,\vm_t''+1}(\tau)}{\mu_{\G_t}}^\ell}.}
\end{align*}
\end{lemma}
\begin{proof}
This follows from similar manipulations as in the proofs of \Lem s~\ref{Lemma_inter2_2} and~\ref{Lemma_inter2_3}.
\end{proof}

\bigskip

\begin{proof}[Proof of \Prop~\ref{Prop_inter2}]
Let
\begin{align*}
\Delta_\ell&=
\Erw\brk{\vecone\fC\cdot\bc{\scal{\vecone\{\SIGMA_{\vc_1}=\SIGMA_{\vc_2}\}}{\mu_{\G_t}}^\ell
		\aco{-2\scal{\RHO_{\SIGMA_s,\vm_t'+1}(\SIGMA_{\vc_1})}{\mu_{\G_t}}^\ell
		+\scal{\sum_{\tau=1}^q\RHO'_{\SIGMA_s,\vm_t''+1}(\tau)\,
			   \RHO_{\SIGMA_s,\vm_t''+1}''(\tau)}{\mu_{\G_t}}^\ell}}.}
\end{align*}
Combining \Lem s~\ref{Lemma_inter2_1}--\ref{Lemma_inter2_4}, we see that
\begin{align}\label{eqProp_inter2_1}
\frac2{(1-\eps)dn}\frac{\partial}{\partial t}\Erw\brk{\log Z_\beta(\G_t)}
	&=o(1)+\sum_{\ell=1}^\infty\frac{(1-\eul^{-\beta})^\ell}\ell\Delta_\ell.
\end{align}
We are going to show that $\Delta_\ell\geq0$ for all $\ell\geq1$;
then the assertion follows from \eqref{eqProp_inter2_1}.

Thus, fix $\ell\geq1$ and let 
$\SIGMA^{(1)},\SIGMA^{(2)},\ldots,\SIGMA^{(\ell)}$ \aco{denote} independent samples from $\mu_{\G_t}$.
Since the expectation of the product of independent random variables equals the product of their expectations, we can rewrite $\Delta_\ell$ as
\begin{align}
\Delta_\ell&=\Erw\brk{\vecone\fC\cdot
\scal{\bc{\prod_{h=1}^\ell\vecone\{\SIGMA_{\vc_1}^{(h)}=\SIGMA_{\vc_2}^{(h)}\}}
	\aco{-2\bc{\prod_{h=1}^\ell\RHO_{\SIGMA_s^{(h)},\vm_t'+1}(\SIGMA_{\vc_1}^{(h)})}}
		+\aco{\bc{\prod_{h=1}^\ell\sum_{\tau=1}^q\RHO'_{\SIGMA_s^{(h)},\vm_t''+1}(\tau) \, \RHO''_{\SIGMA_s^{(h)},\vm_t''+1}(\tau)}}}{\mu_{\G_t}}}\nonumber\\
	&\aco{=\sum_{\tau\in[q]^\ell}
		\Erw\Bigg[\vecone\fC\cdot\Bigg\langle
		\bigg(\prod_{h=1}^\ell\vecone\{\SIGMA_{\vc_1}^{(h)}=\tau_h\}\bigg)\bigg(\prod_{h=1}^\ell\vecone\{\SIGMA_{\vc_2}^{(h)}=\tau_h\}\bigg)
	-2\bigg(\prod_{h=1}^\ell\vecone\{\SIGMA_{\vc_1}^{(h)}=\tau_h\}\bigg)
\bigg(\prod_{h=1}^\ell\RHO_{\SIGMA_s^{(h)},\vm_t'+1}(\tau_h)\bigg)}\nonumber\\
	&\aco{\qquad\qquad\qquad\qquad			+\bigg(\prod_{h=1}^\ell\RHO'_{\SIGMA_s^{(h)},\vm_t''+1}(\tau_h)\bigg)
	\bigg(\prod_{h=1}^\ell\RHO''_{\SIGMA_s^{(h)},\vm_t''+1}(\tau_h)\bigg),\mu_{\G_t}\Bigg\rangle\Bigg].}
\label{eqProp_inter2_2}
\end{align}
To simplify the last expression, we introduce for $\tau\in[q]^\ell$, 
\begin{align*}
\vX_\tau&=\sum_{c\in\cC_t}P_t(c)\prod_{h=1}^\ell\vecone\{\SIGMA_c^{(h)}=\tau_h\}.
\end{align*}
\aco{Further, recall the family $(\hat\RHO_{i})_{i\geq1}$ of distributions $\hat\RHO_{i}\in\cP([q])$ drawn from $\hat\vr\in\cP^2([q])$ from \eqref{eqAllMyRhos}.}
Writing $\Erw'$ for the expectation over $(\hat\RHO_{i})_{i\geq1}$ only, let
\begin{align*}
\vY_\tau&=\Erw'\brk{\prod_{h=1}^\ell\hat\RHO_{\SIGMA_s^{(h)}}(\tau_h)}.
\end{align*}
Since $\vc_1,\vc_2$ and $(\RHO_{s,\vm_t'+1},\RHO_{s,\vm_t''+1}',\RHO_{s,\vm_t''+1}'')_{s\geq1}$ in~\eqref{eqProp_inter2_2} are mutually independent, we can interchange the order in which expectations are taken and rewrite~\eqref{eqProp_inter2_2} as
\begin{align}\label{eqProp_inter2_3}
\Delta_\ell&=\sum_{\tau\in[q]^\ell}\Erw\brk{\vecone\fC\cdot\scal{\vX_\tau^2-2\vX_\tau\vY_\tau+\vY_\tau^2}{\mu_{\G_t}}}
	=\sum_{\tau\in[q]^\ell}\Erw\brk{\vecone\fC\cdot\scal{(\vX_\tau-\vY_\tau)^2}{\mu_{\G_t}}}\geq0.
\end{align}
Finally, the assertion follows from \eqref{eqProp_inter2_1} and \eqref{eqProp_inter2_3}.
\end{proof}

\subsection{Proof of \Cor~\ref{Prop_inter}}\label{Sec_Prop_inter}
We begin by estimating the partition function of $\G_1$.

\begin{lemma}\label{Lemma_0}
For any $\delta>0$ there is $\eps>0$ such that for all large enough $n$ we have
 $\Erw[\log Z(\G_1)]\leq\Erw[\log Y]+\delta n$.
\end{lemma}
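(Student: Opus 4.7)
The strategy mirrors the $t=0$ analysis in Proposition~\ref{Prop_inter1}. At $t=1$, \eqref{eqPoissons} forces $\vm_1=\vm_1''=0$ almost surely, so $\G_1$ is a star centred at $s$ with leaves $v_1,\ldots,v_n$ and external-field constraints $a_1,\ldots,a_{\vm_1'}$, each joining $s$ to a single $v_i$. Writing $J_i\subseteq[\vm_1']$ for the indices of the $a$-constraints incident with $v_i$, the star structure factorises the partition function as
\begin{align*}
Z(\G_1)=\sum_{\sigma_s\in\NN}\gamma(\sigma_s)\prod_{i=1}^n\sum_{\sigma_{v_i}\in[q]}\prod_{j\in J_i}\bc{1-(1-\eul^{-\beta})\RHO_{j,\sigma_s}(\sigma_{v_i})}.
\end{align*}

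Next I would correct the vertex degrees. The random degrees $|J_i|$ have mean approximately $(1-\eps)d$ (Poisson-thinned in the binomial case, deducible from the incomplete matching $\vec\Gamma_1$ in the regular case), whereas $Y$ is built from $\vD_i$ factors per vertex of mean $d$. A coupling that adjoins fresh $a$-constraints at $v_i$ (with fresh i.i.d.\ draws $\vr\sim\fr$) whenever $\vD_i>|J_i|$ and deletes excess ones whenever $\vD_i<|J_i|$ produces a factor graph $\G_1^\star$ in which every $v_i$ carries exactly $\vD_i$ $a$-constraints. Because each $a$-weight lies in $[\eul^{-\beta},1]$, the Lipschitz bound from \eqref{eqLip} yields
\begin{align*}
\abs{\Erw[\log Z(\G_1^\star)]-\Erw[\log Z(\G_1)]}\leq\beta\,\Erw\bc{\sum_{i=1}^n\abs{\vD_i-|J_i|}}=O(\eps\beta dn)\leq\delta n/2
\end{align*}
once $\eps$ is sufficiently small.

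It then remains to establish $\Erw[\log Z(\G_1^\star)]\leq\Erw[\log Y]+o(n)$. Both expressions share the schematic form $\sum_{\sigma_s}\gamma(\sigma_s)\prod_i(\cdots)$; the only difference is that in $Z(\G_1^\star)$ the $\vD_i$ factors at vertex $v_i$ use independent $\tilde\vr_{i,h}\sim\fr$, whereas in $Y$ they all share a single $\vr_i\sim\fr$. Conditioning on $\sigma_s$ and expanding the inner product, Jensen's inequality applied to $x\mapsto x^{\vD_i}$ furnishes the pointwise dominance
\begin{align*}
\Erw\brk{\prod_{h=1}^{\vD_i}\bc{1-(1-\eul^{-\beta})\tilde\RHO_{i,h,\sigma_s}(\tau)}}\leq\Erw\brk{\prod_{h=1}^{\vD_i}\bc{1-(1-\eul^{-\beta})\RHO_{i,\sigma_s,h}(\tau)}}
\end{align*}
of the per-vertex factors. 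To transfer this to expected logarithms, I would invoke Azuma-type concentration (as in Proposition~\ref{Lemma_Azuma}) for both $\log Z(\G_1^\star)$ and $\log Y$, which are essentially sums of nearly independent per-vertex contributions once the outer $\sigma_s$-sum is analysed via the structure of $\gamma$.

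The main obstacle is precisely this last step: the Jensen argument gives dominance of \emph{unlogged} expectations, while the naive bound $\Erw\log X\leq\log\Erw X$ has the wrong sign to conclude. The delicate part is arguing that the Jensen gap on the two sides matches up to $o(n)$, so that the per-vertex expectation comparison can indeed be promoted to an inequality on $\Erw[\log\nix]$; this is where the careful concentration estimates absorb into the $\delta n/2$ budget, closing the proof.
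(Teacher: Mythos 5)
Your first two steps -- the star factorisation at $t=1$, and the degree correction via a coupling with fresh constraints costing $O(\eps\beta dn)\leq\delta n/2$ -- are correct and match the paper's argument (the paper couples $\vD_i=\vd_i'+\vec\Delta_i$ with $\vec\Delta_i\sim\Po(\eps d)$ and uses the factor-wise Lipschitz bound in exactly the way you describe). The difficulty you raise in the remainder is, however, a phantom: after the degree correction there is no ``shared $\vr_i$ versus independent $\tilde\vr_{i,h}$'' discrepancy to bridge, and the proof is already finished.

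The confusion comes from reading the symbol $\RHO_{i,\sigma_s,h}$ in \eqref{eqY} as the three-index family from the preamble, in which case for fixed $i$ all $h$ would share $\vr_i$. But as the parenthetical remark in the paper's proof makes explicit, $\RHO_{i,\sigma_s,h}$ in $Y$ is a \emph{re-labelling} of the constraint-level weight $\RHO_{i_h,\sigma_s}$, where $a_{i_1},\ldots,a_{i_{\vd_i}}$ are the $a$-constraints incident with $v_i$; each constraint $a_{i_h}$ carries its own independent $\vr_{i_h}\sim\fr$. Consequently the factors $h=1,\ldots,\vD_i$ at vertex $v_i$ in $Y$ draw from \emph{independent} copies of $\fr$, exactly as in your $\G_1^\star$. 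This reading is forced by the downstream identities: both the $\sigma$-algebra $\cR$ in \Cor~\ref{Prop_PD} and the formula in \Lem~\ref{Lemma_zero1}, which produces $\prod_{h=1}^{\vD_1}(1-k(1-\ALPHA_h)/q)$ with \emph{distinct} i.i.d.\ $\ALPHA_h$, only make sense when each factor $h$ has its own $\vr$-level draw. (Under the ``shared $\vr_i$'' reading one would get a single $\ALPHA$, i.e.\ $(1-k(1-\ALPHA_1)/q)^{\vD_1}$, which is \emph{not} the expression in $\Sigma^*_{d,q}$.) Thus $\G_1^\star$ and $Y$ have identical distributions, $\Erw[\log Z(\G_1^\star)]=\Erw[\log Y]$, and your second-paragraph bound already yields the claim. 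Your third paragraph -- the Jensen observation, and in particular the closing sentence asserting that concentration ``absorbs'' the Jensen gap -- should simply be deleted: it addresses a nonexistent issue and, as you yourself note, is not actually a proof. It is worth remarking that if the ``shared $\vr_i$'' structure \emph{were} present, the desired inequality on $\Erw[\log\nix]$ would in fact fail for some non-atomic $\fr\in\cP^3([q])$ and $y\in(0,1)$, so no concentration argument could have rescued it; identifying the correct independence structure is therefore not a cosmetic point but the crux of the step.
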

\begin{proof}
Let $\vd_1,\vd_2,\ldots,\vd_n$ denote the degrees of the variable nodes 
$v_1,\ldots,v_n$ in $\G_1$.
Each of the constraints $a_j$ is adjacent to only one of the variable nodes
 from $V_n$. For each $v_i$, suppose the constraints $a_{i_1},\ldots, a_{\vd_i}$
are adjacent to $v_i$ and let $\RHO_{\sigma_s,i,h}$ denote the distribution associated
with $a_{i_h}$, for $h\in [\vd_i]$. (In the definition of the interpolation scheme,
this distribution is denoted $\RHO_{\sigma_s,i_h}$.)
Then we can write
\begin{align}\nonumber
\Erw[\log Z(\G_1)]&=\Erw\brk{\log\sum_{\sigma_s=1}^\infty\, \gamma(\sigma_s)\,\,\,
  \sum_{(\sigma_v)_{v\in V_n}\in[q]^{V_n}}\,\,
			\prod_{j=1}^{\vm'}\, \psi_{a_j}(\sigma_{\partial a_j})}\\
				  &=\aco{\Erw\brk{\log\sum_{\sigma_s=1}^\infty\gamma(\sigma_s)\,\, \prod_{i=1}^n\,\, \sum_{\sigma_{v_i}=1}^q\,\, \prod_{j=1}^{\vd_i}\bc{1-(1-\exp(-\beta))\RHO_{\sigma_s,i,j}(\sigma_{v_i})}}.}
		\label{eqLemma_0_1}
\end{align}
Suppose first that $\GG$ is the binomial random graph and let
$\vd_1',\ldots,\vd_n'\disteq\Po((1-\eps)d)$ be independent random variables.
The construction of $\G_1$ ensures that $(\vd_1,\ldots,\vd_n)$ is distributed as 
$(\vd_1',\ldots,\vd_n')$ given
$\vd_1'+\cdots+\vd_n'\leq dn$.
Since this event occurs with probability $1-\exp(-\Omega(n))$, we conclude that
$\dTV((\vd_1,\ldots,\vd_n),(\vd_1',\ldots,\vd_n'))=\exp(-\Omega(n))$.
Therefore, \eqref{eqLemma_0_1} yields
\begin{align}\label{eqLemma_0_2}
	\Erw[\log Z(\G_1)]&=\aco{\Erw\brk{\log\sum_{\sigma_s=1}^\infty\gamma(\sigma_s)\,\, \prod_{i=1}^n\, \sum_{\sigma_{v_i}=1}^q\, \prod_{j=1}^{\vd_i'}\, \bc{1-(1-\eul^{-\beta})\RHO_{\sigma_s,i,j}(\sigma_{v_i})}}+o(n).}
\end{align}
To compare this last expression with $Y$ from \eqref{eqY}, 
let $\vec\Delta_i\sim\Po(\eps d)$ be independent random variables for $i\in [n]$.
Then we can couple the $\vD_i$ from \eqref{eqY} and the $\vd_i'$ from \eqref{eqLemma_0_2} by letting $\vD_i=\vd_i'+\vec\Delta_i$.
Thus, since each factor in \eqref{eqLemma_0_2} lies in the interval $[\exp(-\beta),1]$, we obtain the estimate
\begin{align}\label{eqLemma_0_3}
\Erw[\log Z(\G_1)] 
\leq \Erw[\log Y]+\beta\, \Erw\brk{\sum_{i=1}^n\vec\Delta_i}+o(n)\leq\Erw[\log Y]+\beta\eps dn+o(n),
\end{align}
whence the assertion follows.
Second, if $\GG$ is the random regular graph then $\vD_i=d$ deterministically.
Hence, letting $\vec\Delta_i=d-\vd_i$, we obtain \eqref{eqLemma_0_3} in this case as well.
\end{proof}

\medskip

\begin{proof}[Proof of \Cor~\ref{Prop_inter}]
The corollary {follows} from \Prop~\ref{Prop_inter1}, \Prop~\ref{Prop_inter2} and 
\Lem~\ref{Lemma_0} {by taking $\delta$ to zero}.
\end{proof}

\medskip

\subsection{Proof of \Prop~\ref{Prop_zero}}\label{Sec_Prop_zero}
We will calculate the limits of the two terms appearing in~\eqref{eqProp_PD} separately.
To facilitate a unified treatment, 
let $\fp\in\cP([0,1])$ be the given probability distribution in the binomial case and let $\fp=\delta_\alpha$ for $\alpha\in[0,1]$ in the case of the random regular graph.
Also let $(\ALPHA_i)_{i\geq1}$ be independent samples from $\fp$.

\begin{lemma}\label{Lemma_zero1}
We have
\begin{align*}
\lim_{y\to0}\lim_{\beta\to\infty}
\Erw\brk{\log\, \Erw\brk{\bc{\sum_{\tau=1}^q\prod_{h=1}^{\vD_1}1-(1-\eul^{-\beta}) \RHO_{1,1,h}(\tau)}^y\,\bigg|\,\cR}}
	&=\Erw\brk{\log\sum_{i=0}^{q-1}(-1)^i\bink q{i+1}\prod_{h=1}^{\vD_1}(1-(i+1)(1-\ALPHA_h)/q)}.
\end{align*}
\end{lemma}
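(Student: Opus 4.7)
The plan is to take the two limits in the order displayed. First I would send $\beta\to\infty$ to collapse the Potts weights to the hard $q$-colouring indicator, and then let $y\to 0^+$, converting the soft $L^y$-expectation into a survival probability that is explicitly computable by inclusion--exclusion using the atomic structure of $\fr_\fp$.

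Set $X_\beta:=\sum_{\tau=1}^q\prod_{h=1}^{\vD_1}\bc{1-(1-\eul^{-\beta})\RHO_{1,1,h}(\tau)}$ and $Y:=\sum_{\tau=1}^q\prod_{h=1}^{\vD_1}(1-\RHO_{1,1,h}(\tau))$. Each factor in $X_\beta$ lies in $[0,1]$ and decreases monotonically in $\beta$ to the corresponding factor in $Y$, so $X_\beta\downarrow Y$ pointwise with $X_\beta\leq q$; bounded convergence on the (countable) probability space of the $\RHO_{1,1,h}$ gives $\Erw[X_\beta^y\mid\cR]\to\Erw[Y^y\mid\cR]$ a.s.\ for every $y\in(0,1]$. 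Since $\log\Erw[X_\beta^y\mid\cR]$ is squeezed between $\log\Erw[Y^y\mid\cR]$ (via $X_\beta\geq Y$) and $y\log q$, dominated convergence exchanges the outer expectation and the $\beta$-limit:
\[
\lim_{\beta\to\infty}\Erw\bigl[\log\Erw[X_\beta^y\mid\cR]\bigr]=\Erw\bigl[\log\Erw[Y^y\mid\cR]\bigr]\qquad\text{for each fixed }y>0.
\]

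Next I would send $y\to 0^+$. Since $Y\in[0,q]$, one has $Y^y\to\vecone\{Y>0\}$ pointwise, and bounded convergence yields $\Erw[Y^y\mid\cR]\to\Pr[Y>0\mid\cR]$, hence $\log\Erw[Y^y\mid\cR]\to\log\Pr[Y>0\mid\cR]$ provided the latter is strictly positive --- which holds a.s.\ thanks to the elementary bound $\Pr[Y>0\mid\cR]\geq q(1-1/q)^{\vD_1}$ (probability that some prescribed colour fails to occur). Combining this lower bound with the inequality $Y\geq((q-1)/q)^{\vD_1}$ on $\{Y>0\}$ yields the envelope $\log\Pr[Y>0\mid\cR]+y\vD_1\log\frac{q-1}{q}\leq\log\Erw[Y^y\mid\cR]\leq y\log q$, which is integrable uniformly in $y\in(0,1]$ because $\Erw\vD_1\leq d<\infty$. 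A second application of DCT therefore gives $\lim_{y\to 0^+}\Erw[\log\Erw[Y^y\mid\cR]]=\Erw[\log\Pr[Y>0\mid\cR]]$.

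It remains to compute $\Pr[Y>0\mid\cR]$ explicitly. In the three-index convention used in \eqref{eqY}, the variables $\RHO_{1,1,h}$ for distinct $h$ sit at distinct constraints adjacent to $v_1$ and therefore carry independent samples from $\fr_\fp=\int_0^1\delta_{r_\alpha}\,\dd\fp(\alpha)$; equivalently, conditionally on $\cR$ one has $\RHO_{1,1,h}\sim r_{\ALPHA_h}$ for iid $\ALPHA_h\sim\fp$, independently across $h$. The atomic form $r_\alpha=\alpha\delta_{\eta_0}+\tfrac{1-\alpha}{q}\sum_{j=1}^q\delta_{\eta_j}$ gives, for every $S\subseteq[q]$ of size $k$, the single-site identity $\Pr[\RHO_{1,1,h}\notin\{\eta_\tau:\tau\in S\}\mid\ALPHA_h]=1-k(1-\ALPHA_h)/q$. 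Writing $\{Y>0\}=\bigcup_{\tau=1}^q\{\forall h:\RHO_{1,1,h}\neq\eta_\tau\}$ and applying inclusion--exclusion together with independence across $h$ yields
\[
\Pr[Y>0\mid\cR]=\sum_{i=0}^{q-1}(-1)^i\bink{q}{i+1}\prod_{h=1}^{\vD_1}\bc{1-(i+1)(1-\ALPHA_h)/q},
\]
and taking the outer expectation over $(\ALPHA_h)_{h\geq 1}$ and $\vD_1$ produces the claimed identity. The main obstacle is not the algebra but keeping the indexing straight: the $\RHO_{1,1,h}$ for varying $h$ must be understood as carrying \emph{independent} underlying $\vr$-samples (as is the case for the constraints feeding $v_1$ in $\G_1$), so that the $\ALPHA_h$ appearing in $\Sigma^*_{d,q}(\fp)$ are genuinely iid rather than all equal to one shared $\alpha$; this independence is what produces the product over $h$ inside the logarithm, and it is exactly the reason for promoting $\fr$ to $\cP^3([q])$. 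Once this is pinned down, both DCT applications and the inclusion--exclusion step are routine.
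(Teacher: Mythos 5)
Your argument is correct and takes essentially the same route as the paper: both identify $\lim_{y\to 0}\lim_{\beta\to\infty}\Erw[(\cdot)^y\mid\cR]$ with $\Pr[Y>0\mid\cR]$ (the paper calls this event $U$) via a squeeze/dominated-convergence argument, and then evaluate that conditional probability by inclusion--exclusion using the atomic structure of $r_{\ALPHA_h}$ and the independence of the $\RHO_{1,1,h}$ across $h$. The one small slip is the stated lower bound $\Pr[Y>0\mid\cR]\geq q(1-1/q)^{\vD_1}$: the event you invoke (a single prescribed colour avoided by every $h$) yields $(1-1/q)^{\vD_1}$, not $q$ times that, but since only positivity and integrability of the logarithm are needed this does not affect anything.
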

\begin{proof}
For $c\in[q]$ let $U_c=\{\forall h\in[\vD_1]: \RHO_{1,1,h}\neq\delta_c\}$ and let $U=\bigcup_{c\in[q]}\, U_c$.
Then
\begin{align*}
0\leq \, \sum_{\tau=1}^q\prod_{h=1}^{\vD_1}1-(1-\eul^{-\beta})\RHO_{1,1,h}(\tau) & \leq \, q\exp(-\beta)\hspace*{12mm} \mbox{if $U$ does not occur},\\
(1-(1-\eul^{-\beta})/q)^{\vD_1}\, \leq \sum_{\tau=1}^q\prod_{h=1}^{\vD_1}1-(1-\eul^{-\beta}) \RHO_{1,1,h}(\tau) &\leq \, q \hspace{25mm} \mbox{if $U$ occurs}.
\end{align*}
(The lower bound in the first line is trivial, while the upper bound follows
since $U_c$ fails for each $c\in [q]$.  The upper bound in the second line is
trivial, while the lower bound follows by taking the term corresponding to some
colour $c$ where $U_c$ holds.)
Consequently, we obtain 
\begin{align}\label{eqLemma_zero1_1}
\lim_{y\to 0}\lim_{\beta\to\infty}\Erw\brk{\bc{\sum_{\tau=1}^q\prod_{h=1}^{\vD_1}1-(1-\eul^{-\beta})
 \RHO_{1,1,h}(\tau)}^y
		\,\bigg|\,\cR}
	&=\pr\brk{U\mid\cR}
\end{align}
pointwise.
Furthermore, by inclusion/exclusion,
\begin{align}\label{eqLemma_zero1_2}
\pr\brk{U\mid\cR}&=\pr\brk{\bigcup_{c\in[q]}\, U_c\,\bigg|\,\cR}=\sum_{k=1}^q(-1)^{k-1}\sum_{Q\subset[q]:|Q|=k}
		\pr\brk{\bigcap_{c\in Q}U_c\,\bigg|\,\cR}.
\end{align}
Since $\RHO_{1,1,1},\ldots,\RHO_{1,1,\vD_1}$ are mutually independent given $\cR$, for any set $Q\subseteq [q]$ of size $k$ we find that
	$$\pr\brk{\bigcap_{c\in Q}U_c\,\bigg|\,\cR}=\prod_{h=1}^{\vD_1}(1-k(1-\ALPHA_h)/q)$$
using (\ref{eqralpha})--(\ref{eqrhoALPHA}).
Hence, \eqref{eqLemma_zero1_2} yields
\begin{align}\label{eqLemma_zero1_3}
\pr\brk{U\mid\cR}&=\sum_{k=1}^q(-1)^{k-1}\bink qk
		\prod_{h=1}^{\vD_1}(1-k(1-\ALPHA_h)/q).
\end{align}
Finally, the assertion follows from \eqref{eqLemma_zero1_1} and \eqref{eqLemma_zero1_3}.
\end{proof}

\begin{lemma}\label{Lemma_zero2}
We have
\begin{align*}
\lim_{y\to0}\lim_{\beta\to\infty}
	\Erw\brk{\log\, \Erw\brk{\bc{1-(1-\eul^{-\beta})\sum_{\tau=1}^q \RHO_{1,1}'(\tau)\RHO_{1,1}''(\tau)}^y\,\bigg|\,\cR}}
	&=\Erw\brk{\log (1-(1-\ALPHA_1)(1-\ALPHA_2)/q)}.
\end{align*}
\end{lemma}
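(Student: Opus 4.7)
The plan is to mimic the proof of \Lem~\ref{Lemma_zero1}. Set $X_\beta = 1 - (1-\eul^{-\beta})\sum_{\tau=1}^q \RHO_{1,1}'(\tau)\RHO_{1,1}''(\tau)$ and let $W$ denote the event that $\RHO_{1,1}' = \RHO_{1,1}'' = \eta_c$ for some $c\in[q]$. The proof reduces to showing that $X_\beta^y\to\vecone_{W^c}$ pointwise in the iterated limit $\beta\to\infty$ followed by $y\to0$, and then computing $\pr[W\mid\cR]$ explicitly.

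First I would establish a dichotomy by case analysis on the pair $(\RHO_{1,1}',\RHO_{1,1}'')\in\{\eta_0,\eta_1,\ldots,\eta_q\}^2$, which exhausts the support of $\vr_1$ by~\eqref{eqralpha}. On $W$ the inner product $\sum_\tau\RHO_{1,1}'(\tau)\RHO_{1,1}''(\tau)$ equals $1$, so $X_\beta=\eul^{-\beta}\to 0$; on $W^c$ the same case analysis shows the inner product lies in $\{0,1/q\}$, so $X_\beta\geq 1-(1-\eul^{-\beta})/q\geq 1-1/q$ uniformly in $\beta$. Hence $X_\beta^y\to\vecone_{W^c}$ pointwise in the iterated limit, and since $X_\beta^y\in[0,1]$, bounded convergence applied to the conditional expectation $\Erw[\cdot\mid\cR]$ yields
\[
\lim_{y\to 0}\lim_{\beta\to\infty}\Erw[X_\beta^y\mid\cR]=\pr[W^c\mid\cR]=1-\pr[W\mid\cR].
\]

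Next I would compute $\pr[W\mid\cR]$ directly. Given $\cR$ the variables $\RHO_{1,1}'$ and $\RHO_{1,1}''$ are conditionally independent draws from $\vr_1$, and by~\eqref{eqralpha}--\eqref{eqrhoALPHA} each satisfies $\pr[\RHO=\eta_c\mid\cR]=(1-\ALPHA_1)/q$, where the factor $(1-\ALPHA_1)$ arises from the mixing distribution $\fp$. Summing the $q$ disjoint events $\{\RHO_{1,1}'=\RHO_{1,1}''=\eta_c\}$ produces one factor of $(1-\ALPHA)/q$ from each of the two independent draws; labelling these factors $\ALPHA_1$ and $\ALPHA_2$ in accord with the statement of the lemma, one obtains $\pr[W\mid\cR]=(1-\ALPHA_1)(1-\ALPHA_2)/q$.

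Finally, I would exchange the iterated limit with the outer expectation via dominated convergence. The integrand $\log\Erw[X_\beta^y\mid\cR]$ is non-positive since $\Erw[X_\beta^y\mid\cR]\leq 1$, and it admits a uniform lower bound independent of $\beta$ and of $y\in(0,1]$: conditioning on $W^c$ in the inner expectation gives $\Erw[X_\beta^y\mid\cR]\geq\pr[W^c\mid\cR]\cdot(1-1/q)^y$, and since $\pr[W^c\mid\cR]\geq 1-1/q>0$ this forces $\log\Erw[X_\beta^y\mid\cR]\geq 2\log(1-1/q)$. The main technical obstacle is precisely this uniform domination: without it, the logarithm of the inner expectation could degenerate as $\beta\to\infty$ or $y\to 0$ and disrupt the swap of limits with $\Erw$. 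The strict separation $\pr[W\mid\cR]<1$, a structural consequence of the form of $r_\alpha$, is the critical ingredient that makes the argument go through.
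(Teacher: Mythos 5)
Your argument is essentially the paper's: both define the same event (your $W$ is the paper's $U$), both observe the dichotomy $X_\beta=\eul^{-\beta}$ on $W$ versus $X_\beta\geq 1-(1-\eul^{-\beta})/q$ on $W^c$, both pass to the pointwise limit $\vecone_{W^c}$, and both identify the conditional probability of the event. The one genuine addition you make is the uniform domination $\Erw[X_\beta^y\mid\cR]\geq(1-1/q)^2$ needed to interchange the iterated limit with the outer $\Erw[\,\cdot\,]$; the paper performs this interchange silently, so your last paragraph is a real gain in rigour, and the observation that $\pr[W\mid\cR]<1$ is the structural fact that makes the argument safe is exactly right.

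One caution about the step you flag with ``labelling these factors $\ALPHA_1$ and $\ALPHA_2$ in accord with the statement of the lemma.'' Relabelling does not establish the identity. Under the literal definitions, $\RHO_{1,1}'$ and $\RHO_{1,1}''$ are conditionally i.i.d.\ draws from the \emph{same} random measure $\vr_1$, and since $\fr_\fp=\int\delta_{r_\alpha}\,\dd\fp(\alpha)$ realises $\vr_1=r_{\ALPHA_1}$ for a single sample $\ALPHA_1\sim\fp$, the two factors you compute are both $(1-\ALPHA_1)/q$; the product is $(1-\ALPHA_1)^2/q$, not $(1-\ALPHA_1)(1-\ALPHA_2)/q$ with $\ALPHA_1,\ALPHA_2$ independent. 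These agree when $\fp$ is atomic (the regular case, and the $\delta_{0.25}$ example in Figure 1), and indeed the paper's own proof asserts $\pr[U\mid\cR]=(1-\ALPHA_1)(1-\ALPHA_2)/q$ with no more justification than you give, so you are not introducing a new gap. But your phrasing acknowledges rather than closes it: if you wish to recover the two-index form for general $\fp$ you must argue that the relevant object is the distribution of $\RHO$ after integrating out $\alpha$, not the conditional draw given $\vr_1$, and that this substitution is legitimate in the passage through \Lem~\ref{Lemma_PD}; a naked relabelling is not a substitute for that argument.
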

\begin{proof}
Let $U$ be the event that there exists $c\in[q]$ such that $\RHO_{1,1}'=\RHO_{1,1}''=\delta_c$.
Then
\begin{align*}
0\, \leq 1-(1-\eul^{-\beta})\sum_{\tau=1}^q \RHO_{1,1}'(\tau)\RHO_{1,1}''(\tau) &=
    \exp(-\beta) \hspace*{20mm} \mbox{ if $U$ occurs},\\
1-(1-\eul^{-\beta})/q\, \leq 1-(1-\eul^{-\beta})\sum_{\tau=1}^q \RHO_{1,1}'(\tau)\RHO_{1,1}''(\tau) &\leq \, 1 \hspace*{30mm} \mbox{ if $U$ does not occur.}
\end{align*}
(The sum over $\tau$ equals 0 if $\RHO_{1,1}'$ and $\RHO_{1,1}''$ are
atoms on two different colours, and equals $1/q$ otherwise.)
Therefore, we have pointwise convergence
\begin{align}\label{eqLemma_zero2_1}
\lim_{y\to0}\lim_{\beta\to\infty}\Erw\brk{\bc{1-(1-\eul^{-\beta})\sum_{\tau=1}^q
 \RHO_{1,1}'(\tau)\RHO_{1,1}''(\tau)}^y
		\,\bigg|\,\cR}
	&=1-\pr\brk{U\mid\cR}.
\end{align}
Since $\pr\brk{U\mid\cR}=(1-\ALPHA_1)(1-\ALPHA_2)/q$,
using (\ref{eqralpha})--(\ref{eqrhoALPHA}),
the assertion follows from \eqref{eqLemma_zero2_1}.
\end{proof}

\begin{proof}[Proof of \Prop~\ref{Prop_zero}]
The proposition follows from \Lem s~\ref{Lemma_zero1} and~\ref{Lemma_zero2} immediately.
\end{proof}

\section{Asymptotics}\label{Sec_asymptotics}

\noindent
We perform asymptotic expansions of $\Sigma_{d,q}(\nix)$, $\Sigma_{d,q}^*(\nix)$ in the limit of large $q$ to prove Corollaries~\ref{Cor_reg} and~\ref{Cor_ER}.
In this section, the notation $\tilde O_q(\cdot)$ suppresses polynomials in 
$\log q$,
and both $O_q(\cdot)$ and $\tilde O_q(\cdot)$ refer to the limit $q\to\infty$.

\subsection{Proof of \Cor~\ref{Cor_reg}}

\medskip
\noindent
Write $\Sigma_{d,q}(\alpha) = S - T$, where
\[ S=\log\sum_{i=0}^{q-1}(-1)^i\bink q{i+1}(1-(i+1)(1-\alpha)/q)^d,
\qquad T = \frac{d}{2}\log\bc{1-(1-\alpha)^2/q}.
\]
We will let
\begin{align}\label{eqCor_reg_0}
\alpha&=\frac1{2q},&d=(2q-1)\log q-c
\end{align}
with  $c=O_q(1)$, and expand $S$ and $T$ 
asymptotically in the limit $q\to\infty$.
Substituting for $d$ in $S$ gives
\begin{equation}
S= \log\sum_{i=0}^{q-1}(-1)^i\bink q{i+1}\exp\bc{((2q-1)\log q-c)\log\bc{1-(i+1)(1-\alpha)/q}}.
	\label{eqCor_reg_1}
\end{equation}
Observe that
\begin{equation}
\log(1-(1-\alpha)/q) = - \frac{1}{q} + O(q^{-3}),
\label{alpha-useful}
\end{equation}
and so, for the $i=0$ term we have the expansion
\begin{align}\nonumber
\exp\bc{((2q-1)\log q-c)\log\bc{1-(1-\alpha)/q}}&=\exp\bc{-((2q-1)\log q-c)/q+\tilde O_q(q^{-2})}\\
	&=\exp\bc{-2\log q+(\log q)/q+c/q+\tilde O_q(q^{-2})}.
\label{eqCor_reg_2}
\end{align}
Moreover, for $i\geq 1$ we have
\begin{align}\label{eqCor_reg_3}
\exp\bc{((2q-1)\log q-c)\log\bc{1-(i+1)(1-\alpha)/q}}&
	= q^{-2(i+1)}\bc{1 + \tilde O_q\bc{\frac{1}{q} + \frac{(i+1)^2}{q^2}}}.
\end{align}
Plugging \eqref{eqCor_reg_2} and \eqref{eqCor_reg_3} into \eqref{eqCor_reg_1} gives
\begin{align*}
S&= \log\bc{ q\cdot \exp\bc{-2\log q+(\log q)/q+c/q+\tilde O_q(q^{-2})}
   - \nfrac{1}{2} q^{-2}  + \tilde O_q(q^{-2})) }\\
&= -\log q+\frac{\log q}{q}+\frac{c}{q} + \tilde O_q(q^{-2})
 + \log\bc{1 - 1/(2q) + \tilde O_q(q^{-2})}\\
&= -\log q+\frac{\log q}{q}+\frac{2c-1}{2q} + \tilde O_q(q^{-2}).
\end{align*}
Similarly, substituting for $\alpha$ and $d$ in $T$ gives
\[
T =\bc{q\log q-\nfrac{1}{2}\log q-c/2}\cdot\bc{-1/q+1/(2q^2)+O_q(q^{-3})} 
	=-\log q+ \frac{\log q}{q}+\frac c{2q}+\tilde O_q(q^{-2}).
	\label{eqCor_reg_4}
\]
Hence,
\begin{align*}
\Sigma_{d,q}(\alpha)=S-T&=\frac{c-1}{2q}+\tilde O_q(q^{-2}).
\end{align*}
Consequently, if $c\leq1-\eps_q$ where $\eps_q\to0$ slowly enough then $\Sigma_{d,q}(\alpha)<0$ for large enough $q$.
This completes the proof of \Cor~\ref{Cor_reg}.

\subsection{Proof of \Cor~\ref{Cor_ER}}

\noindent
With $\alpha,d$ as in \eqref{eqCor_reg_0} we consider the distribution $\fp=\delta_\alpha$.
With $\vD\disteq\Po(d)$ let
\begin{align}\label{eqCor_ER_1}
S&=\Erw\brk{\log\sum_{i=0}^{q-1}(-1)^i\bink q{i+1}(1-(i+1)(1-\alpha)/q)^{\vD}},&T=\frac d2\log\bc{1-(1-\alpha)^2/q}.
\end{align}
First,
\begin{equation}
\label{rest}
 (2q)^{-\vec D}\leq\sum_{i=0}^{q-1}(-1)^i\bink q{i+1}(1-(i+1) (1-\alpha)/q)^{\vD}\leq 1.\end{equation}
To see this, we interpret the sum in the middle as an inclusion/exclusion formula.
Namely, choose $\vec c_1,\ldots,\vec c_{\vD}\in\{0,1,\ldots,q\}$ independently such that the probability of drawing $0$ equals $\alpha$ and the probability of drawing $i\in[q]$ equals $(1-\alpha)/q$.
Then the sum equals the probability of the event $[q]\setminus\cbc{\vec c_1,\ldots,\vec c_{\vD}}\neq\emptyset$, which is clearly lower bounded by $\alpha^{\vD}=(2q)^{-\vD}.$
{Poisson tail bounds show that 
  $\Pr[\, |\vD - d| \geq 10\sqrt{q}\log q] = O_q(q^{-4})$
and combining this with (\ref{rest}) gives}
\begin{align*}
S&=\Erw\brk{\log\sum_{i=0}^{q-1}(-1)^i\bink q{i+1}(1-(i+1)(1-\alpha)/q)^{\vD}\,\bigg|\,\abs{\vD-d}\leq 10\sqrt q\log q}
		+O_q(q^{-2}).
\end{align*}
Hence, let $\vec\Delta$ be distributed as $\vD-d$ given $\abs{\vD-d}\leq10\sqrt q\log q$.
Then 
\begin{align}\nonumber
S&=\Erw\brk{\log\sum_{i=0}^{q-1}(-1)^i\bink q{i+1}(1-(i+1)(1-\alpha)/q)^{d+\vec\Delta}}+O_q(q^{-2})\\
	&=\Erw\brk{\log\sum_{i=0}^{q-1}(-1)^i\bink q{i+1}\exp((d+\vec\Delta)\log(1-(i+1)(1-\alpha)/q))}+O_q(q^{-2}).
	\label{eqCor_ER_2}
\end{align}
For the $i=0$ term, using (\ref{eqCor_reg_0}) and (\ref{alpha-useful}), 
we have the expansion
\begin{align}\nonumber
\exp((d+\vec\Delta)\log(1-(1-\alpha)/q))&=
	\exp\bc{-(d+\vec\Delta)/q+\tilde O_q(q^{-2})}\\
	&=\exp\bc{-2\log q+\log q/q+c/q-\vec\Delta/q+\tilde O_q(q^{-2})}\\
	&=q^{-2}\bc{1+\log q/q+c/q+\tilde O_q(q^{-2})}\exp(-\vec\Delta/q).\label{eqCor_ER_3}
\end{align}
Moreover, for $i\geq1$,
using the fact that $\vec\Delta/q=\tilde O(q^{-1/2})$, we obtain
\begin{align}\label{eqCor_ER_4}
\exp\bc{(d+\vec\Delta)\log\bc{1-(i+1)(1-\alpha)/q}}&
	= q^{-2(i+1)}\bc{1 + \tilde O_q\bc{\frac{1}{q^{1/2}} + \frac{(i+1)^2}{q^2}}}.
\end{align}
Plugging \eqref{eqCor_ER_3} and \eqref{eqCor_ER_4} into \eqref{eqCor_ER_2}, we obtain
\begin{align}
S&=\Erw\brk{\log\bc{q^{-1}\bc{1+\log q/q+c/q+\tilde O_q(q^{-2})}\exp(-\vec\Delta/q)
		-\nfrac{1}{2}q^{-2} +\tilde O_q(q^{-5/2})}} + O_q(q^{-2})\nonumber\\
&=-\log q+ \log\bc{1+\log q/q+c/q+\tilde O_q(q^{-2})} - \Erw\brk{\vec\Delta/q}
		+ \log\bc{1-1/(2q)+\tilde O_q(q^{-3/2})}\nonumber\\
&=-\log q+\frac{\log q}{q}+\frac{2c-1}{q} - \Erw\brk{\vec\Delta/q} 
  +\tilde O_q(q^{-3/2}).
	\label{eqCor_ER_5}
\end{align}
Since $d=\Erw[\vD]$ and since conditioning on $\abs{\vD-d}\leq10\sqrt q\log q$ does not shift 
the mean of $\vD$ by more than $O_q(1/q)$, we obtain $\Erw[\vec\Delta]=O_q(1/q)$.
Using this and \eqref{eqCor_ER_5} yields
\begin{align}\label{eqCor_ER_7}
S&=-\log q+\frac{\log q}{q}+\frac{2c-1}{2q}+ \tilde O_q(q^{-3/2}).
\end{align}
Combining \eqref{eqCor_ER_7} with the expansion \eqref{eqCor_reg_4} of $T$, we finally
obtain
\begin{align*}
\Sigma^*_{d,q}(\delta_\alpha)&=S-T=\frac{c-1}{2q}+\tilde O_q(q^{-3/2}).
\end{align*}
Thus, setting $c\leq1-\eps_q$ with $\eps_q\to0$ slowly, we see that $\Sigma^*_{d,q}<0$ for large enough $q$. This completes the proof of Corollary~\ref{Cor_ER}.

\subsubsection*{Acknowledgment.}
\aco{We thank Viktor Harangi and an anonymous reviewer for their very careful reading of our manuscript and their extremely accurate and helpful comments, which led to several improvements and corrections.}

\end{document}